\newtheorem{theorem}{Theorem}[section]
\newtheorem{proposition}[theorem]{Proposition}
\newtheorem{corollary}[theorem]{Corollary}
\newtheorem{conjecture}[theorem]{Conjecture}
\newtheorem{lemma}[theorem]{Lemma}
\theoremstyle{definition}
\newtheorem{definition}[theorem]{Definition}
\newtheorem{example}[theorem]{Example}
\newtheorem{remark}[theorem]{Remark}
\numberwithin{equation}{section}
\newcommand{\nc}{\newcommand}
\numberwithin{equation}{section}
\nc{\hs}{\hspace*}
\newcommand{\Z}{\mathbb{Z}}
\newcommand{\Q}{\mathbb{Q}}
\newcommand{\qQ}{\mathcal{Q}}
\newcommand{\C}{\mathbb{C}}
\newcommand{\g}{\mathfrak{g}}
\newcommand{\seteq}{\mathbin{:=}}
\newcommand{\soplus}{\mathop{\mbox{\normalsize$\bigoplus$}}\limits}
\newcommand{\isoto}[1][]{\mathop{\xrightarrow%
[{\raisebox{.3ex}[0ex][.3ex]{$\scriptstyle{#1}$}}]%
{{\raisebox{-.6ex}[0ex][-.6ex]{$\mspace{2mu}\sim\mspace{2mu}$}}}}}
\newcommand{\Hom}{\mathrm{Hom}}
\newcommand{\supp}{{\rm supp}}
\newcommand{\Ht}{{\rm ht}}
\newcommand{\Ker}{\mathrm{Ker}}
\newcommand{\cl}{\mathrm{cl}}
\newcommand{\aff}{\mathrm{aff}}
\newcommand{\Rep}{\mathrm{Rep}}
\newcommand{\Ca}{\mathscr{C}}
\newcommand{\K}{\mathrm{K}}
\newcommand{\rl}{\mathsf{Q}}   
\newcommand{\wl}{\mathsf{P}}   
\newcommand{\cm}{\mathsf{A}}  
\newcommand{\ko}{\mathbf{k}} 
\newcommand{\Se}{\mathscr{S}}
\newcommand{\Rnorm}{R^{\rm{norm}}}
\newcommand{\rmat}[1]{{\mathbf r}_{\mspace{-2mu}\raisebox{-.5ex}{${\scriptstyle{#1}}$}}}
\newcommand{\conv}{\mathbin{\mbox{\large $\circ$}}} 
\newcommand{\dconv}[2]{\overset{#2}{\underset{#1}{\conv}} }
\newcommand{\dtens}[2]{\overset{#2}{\underset{#1}{\tens}} }
\newcommand{\tens}{\mathop\otimes}
\newcommand{\Fun}{\mathcal{F}}
\newcommand{\QFun}[1]{\mathcal{F}^{(#1)}_Q}
\newcommand{\modt}{ \; \mathrm{mod}\; 2  }
\newcommand{\Lto}{\longrightarrow}
\newcommand{\Lgets}{\longleftarrow}
\newcommand{\redex}{\widetilde{w_0}}
\newcommand{\um}{\underline{m}}
\nc{\gl}{\mathfrak{gl}}
\nc{\col}{\colon}
\nc{\Ua}[1][\g]{U_{q}'(#1)}
\nc{\be}{\begin{enumerate}}
\nc{\ee}{\end{enumerate}}
\nc{\bnum}{\begin{enumerate}[{\rm(i)}]}
\nc{\bnam}{\begin{enumerate}[{\rm(a)}]}
\nc{\al}{\alpha}
\nc{\eq}{\begin{eqnarray}}
\nc{\eneq}{\end{eqnarray}}
\nc{\eqn}{\begin{eqnarray*}}
\nc{\eneqn}{\end{eqnarray*}}
\nc{\To}[1][{\hs{2ex}}]{\xrightarrow{\,#1\,}}
\nc{\shc}{\mathcal{C}}
\nc{\ba}{\begin{array}}
\nc{\ea}{\end{array}}
\nc{\vpi}{\varpi}
\nc{\U}[1][\g]{U'_q(#1)}
\nc{\bl}{\bigl(}
\nc{\br}{\bigr)}
\nc{\ro}{{\rm(}}
\nc{\rf}{{\rm)}}
\nc{\rev}{\mathrm{rev}}
\nc{\smod}{\mbox{-$\mathrm{mod}$}}
\nc{\vs}{\vspace*}
\nc{\cor}{\mathbf{k}}
\nc{\uo}{^{(1)}}
\nc{\ut}{^{(2)}}
\nc{\bc}{\begin{cases}}
\nc{\ec}{\end{cases}}
\nc{\la}{\lambda}
\nc{\gmod}{\mbox{-$\mathrm{gmod}$}}
\nc{\noi}{\noindent}
\nc{\Lem}{\begin{lemma}}
\nc{\enlemma}{\end{lemma}}
\nc{\epito}{\twoheadrightarrow}
\renewcommand{\Im}{\mathrm{Im}}
\nc{\nn}{\nonumber}
\nc{\eps}{\varepsilon}
\nc{\utt}{^{(t)}}
\begin{document}

\title[Symmetric Quiver Hecke algebras and R-matrices IV]
{Symmetric quiver Hecke algebras and R-matrices of \\ Quantum affine algebras IV}

\author[S.-J. Kang, M. Kashiwara, M. Kim, S.-j. Oh]{Seok-Jin Kang$^{1}$, Masaki Kashiwara$^{2}$,  Myungho Kim and Se-jin Oh$^{3}$}

\address{Department of Mathematical Sciences
         and
         Research Institute of Mathematics\\
         Seoul National University \\
        Gwanak-ro 1, Gwanak-gu, Seoul 151-747, Korea}

         \email{sjkang@snu.ac.kr}

\address{Research Institute for Mathematical Sciences\\
          Kyoto University\\ Kyoto 606-8502, Japan\\
          \& Department of Mathematical Sciences
         and
         Research Institute of Mathematics\\
         Seoul National University\\ Seoul 151-747, Korea}

         \email{masaki@kurims.kyoto-u.ac.jp}

\address{School of Mathematics, Korea Institute for Advanced Study\\ Seoul 130-722, Korea}
         \email{mhkim@kias.re.kr}

\address{Department of Mathematical Sciences, Seoul National University Gwanak-ro 1, Gwanak-gu, Seoul 151-747, Korea}
         \email{sejin092@gmail.com}

\thanks{$^{1}$This work was supported by NRF Grant  \# 2014-021261 and by NRF Grant \# 2010-0010753.}

\thanks{$^{2}$This work was partially supported by Grant-in-Aid for
Scientific Research (B) 22340005, Japan Society for the Promotion of
Science.}

\thanks{$^{3}$ This work was supported by BK21 PLUS SNU Mathematical Sciences Division.}

\date{February 25, 2015}

\begin{abstract}
Let $\U$ be a twisted affine quantum group of type $A_{N}^{(2)}$
or $D_{N}^{(2)}$ and let $\g_{0}$ be the finite-dimensional
simple Lie algebra of type $A_{N}$ or $D_{N}$.
For a Dynkin quiver of type $\g_{0}$, we define a full subcategory
${\mathcal C}_{Q}^{(2)}$ of the category of finite-dimensional
integrable $\U$-modules, a twisted version of the category
${\mathcal C}_{Q}$ introduced by Hernandez and Leclerc. Applying the
general scheme of affine Schur-Weyl duality, we construct an exact
faithful KLR-type duality functor ${\mathcal F}_{Q}^{(2)}\col
\text{Rep}(R) \rightarrow {\mathcal C}_{Q}^{(2)}$, where
$\text{Rep}(R)$ is the category of finite-dimensional modules
over the quiver Hecke algebra $R$ of type $\g_{0}$
with nilpotent actions of the generators $x_k$. We show that
${\mathcal F}_{Q}^{(2)}$ sends any simple object to a simple object
and induces a ring isomorphism $\K(\text{Rep}(R))
\isoto\K({\mathcal C}_{Q}^{(2)})$.
\end{abstract}

\maketitle

\section*{Introduction}

This paper is a continuation of our previous works on symmetric
quiver Hecke algebras and $R$-matrices of quantum affine algebras
\cite{KKK13A, KKK13B, KKKO14A}. In particular, this work can be
considered as a twisted version of \cite{KKK13B}.

The classical Schur-Weyl duality explains the close connection
between the representation theory of the symmetric group $\mathfrak{S}_k$
and the representation theory of general linear Lie
algebra $\gl_{n}$ \cite{S1, S2}.
In \cite{Jimbo86}, Jimbo introduced
a quantum version of Schur-Weyl duality between the Hecke algebra
$H_{k}$  and quantized universal enveloping algebra $U_{q}(\gl_{n})$.
In \cite{CP96B, Che, GRV94}, Chari-Pressley, Cherednik,
Ginzburg-Reshetikhin-Vasserot constructed a quantum affine
Schur-Weyl duality functor from the category of
finite-dimensional representations of the affine Hecke algebra
$H_{k}^{\text{aff}}$ to the category of finite-dimensional
integrable modules over the quantum affine algebra
$U_{q}(A_{N}^{(1)})$.

Using the symmetric quiver Hecke algebras and $R$-matrices of
quantum affine algebras, it was shown in \cite{KKK13A} that one can
construct quantum affine Schur-Weyl duality functors in
a more general setting.
Let $\{V_{s} \}_{s \in \mathcal{S}}$ be a family of good
modules over a quantum affine algebra $U_{q}'(\g)$. By investigating
the distribution of poles of normalized $R$-matrices
between the $V_s$'s,
we obtain a
symmetric quiver Hecke algebra $R$ and a $(U_{q}'(\g), R)$-bimodule
$\widehat{V}$. In this way, we construct the {\em KLR-type quantum
affine Schur-Weyl duality functor} (or {\em  KLR-type duality
functor} for brevity) $\mathcal{F}$ from the category
$\text{Rep}(R)$ to the
category ${\mathcal C}_{\g}$ of finite-dimensional integrable
$U_{q}'(\g)$-modules given by $M \mapsto \widehat{V} \otimes_{R} M$
$(M \in \text{Rep}(R))$.
Here, $\Rep(R)$ is the category of finite-dimensional modules over the quiver Hecke algebra $R$ on which the generators $x_k$ act nilpotently.
When $R$ is of finite $A$, $D$, $E$ type,
$\mathcal {F}$ is exact (\cite{Kim12}).
With various choices
of quantum affine algebras and good modules, one can construct a lot
of interesting and significant KLR-type duality functors.

 In \cite{KKK13A,KKKO14A}, by taking $\{V(\varpi_1)_{(-q)^{2s}}
\}_{s \in \Z}$ as the family of good modules over $U_q(A_N^{(t)})$
$(N \in \Z_{\ge 2}, \ t \in \{ 1,2\})$, we constructed the exact
functor $\mathcal{F}$ from $\Rep(R)$ to $\mathcal C^0_{A_N^{(t)}}$,
and investigated properties of $\mathcal{F}$ and the relationships
among the categories $\Rep(R)$, $\mathcal C^0_{A_N^{(1)}}$ and
$\mathcal C^0_{A_N^{(2)}}$. Here $\mathcal{C}_{\g}^0$ is the
category of finite-dimensional modules over the quantum affine
algebra $U_q'(\g)$ introduced in \cite{HL10} and $R$ is the quiver
Hecke algebra of type $A_{\infty}$. 

In \cite{KKK13B}, motivated by the work of Hernandez and Leclerc
\cite{HL11}, Kang, Kashiwara and Kim applied the general scheme to
the quantum affine algebras $U_{q}'(A_{N}^{(1)})$,
$U_{q}'(D_{N}^{(1)})$ and their fundamental representations.
Let $\g^{(1)}$ be an untwisted affine Kac-Moody algebra of type $A_{N}^{(1)}$
or $D_{N}^{(1)}$ and let $\g_{0}^{(1)}$ be the classical part of
$\g\uo$, the finite-dimensional simple
Lie algebra of type $A_{N}$ or $D_{N}$ inside $\g\uo$.
 We denote by $I^{(1)}$ and $I^{(1)}_0$
the index set of simple roots for $\g^{(1)}$ and $\g_0^{(1)}$, respectively.
Let $\Delta_{\g_{0}^{(1)}}^{+}$ be the set of positive roots of $\g_0^{(1)}$
and $\Pi_{\g_{0}^{(1)}}$ the set of simple roots.
For any Dynkin quiver $Q$ of type $\g_{0}^{(1)}$, we set $\Gamma_{Q}=
\phi^{-1}(\Delta_{\g_{0}^{(1)}}^{+} \times \{0\})\subset I^{(1)}_0\times\Z$, the Auslander-Reiten
quiver of $Q$.
(See Section 2 for more details including the
definition of $\phi$.)
We take the fundamental representations
$V(\varpi_{i})$ $(i \in I_{0}\uo)$ as the family of good modules over
$U_{q}'(\g^{(1)})$ and take the index set $J= \phi^{-1}(\Pi_{\g_{0}^{(1)}}
\times \{0\})$ together with the maps $s^{(1)}\col J \to I_{0}^{(1)}$ and $X^{(1)}\col J \to
{\mathbf k}^{\times}$ given by $s^{(1)}\col (i,p) \mapsto i$, $X^{(1)}\col (i,p)
\mapsto (-q)^p $ for $(i,p) \in J$. Then the corresponding
quiver Hecke algebra $R$ is of type $\g_{0}^{(1)}$ (i.e., $A_{N}$ or
$D_{N}$) and the KLR-type duality functor is an exact
functor ${\mathcal F}_{Q}^{(1)}\col \text{Rep}(R) \rightarrow {\mathcal
C}_{Q}^{(1)},$ where ${\mathcal C}_{Q}^{(1)}$ is the full
subcategory of ${\mathcal C}_{\g\uo}$ introduced in \cite{HL11}.
Moreover,
${\mathcal F}_{Q}^{(1)}$ sends any simple object to a simple object
and induces a ring isomorphism $\K(\text{Rep}(R))
\isoto\K({\mathcal C}_{Q}^{(1)})$.

In this paper, we extend these results on $\g\uo$
to the
twisted quantum affine algebras $U_{q}'(\g)$ of type $A_{N}^{(2)}$
($N\ge2$)
and $D_{N}^{(2)}$ ($N\ge 4$) (see Remark~\ref{rem:figure}).
As in the untwisted case, we take the fundamental
representations $V(\varpi_{i})$ $(i \in I_{0})$ as the family of
good modules. But to take care of twisted setting, we need to
develop some {\em twisted} techniques.

Let $\g^{(t)}=A^{(t)}_N$ ($t=1,2$) or
$\g^{(t)}=D^{(t)}_N$ ($t=1,2$). Let $\g_0^{(t)}$ be the classical part of $\g^{(t)}$.
Let $I^{(t)}$ and $I^{(t)}_0$ be the index set of simple roots for
$\g^{(t)}$ and $\g_0^{(t)}$, respectively.

For an arbitrary affine Kac-Moody algebra $\g$,
we denote by $\Se(\g)$  the quiver
with the evaluation modules of fundamental modules of $\Ua$ as vertices
and with the arrows determined by the poles of
the normalized R-matrices
(see \eqref{eq: Quiver S}).

Then we have a quiver isomorphism
\eq
\pi_{\g\uo}\col\Se_0(\g^{(1)}) \isoto \Se_0(\g^{(2)}).\label{eq:S01}
\eneq
where $\Se_{0}(\g^{(t)})$ is a connected component of $\Se(\g^{(t)})$
($t=1,2$) (Proposition~\ref{prop: quiver iso}).

Set $J = \phi^{-1}(\Pi_{\g_{0}^{(1)}} \times \{0\})$.
Then we have defined a map
$J\to I_0^{(1)}\times \cor^\times\simeq \Se(\g^{(1)})$.
Its image is contained in $\Se_0(\g^{(1)})$.
Composing it with \eqref{eq:S01}
we obtain
$s\ut\col J \to I_{0}^{(2)}$
and $X\ut\col  J \to {\mathbf k}^{\times}$.
Then the quiver $\Se^{J}$
associated with $J$ and $(X\ut, s\ut)$ (see \eqref{eq: def Se^J}) coincides
with $Q^{\text{rev}}$, the reversed quiver of $Q$.
Hence the associated quiver Hecke algebra
$R$ is of type $\g^{(1)}_0$, i.e. $R=R^{A_{N}}$ or $R^{D_{N}}$.
Thus, we
obtain an exact KLR-type duality functor
$${\mathcal F}_{Q}^{(2)}\col
\text{Rep}(R) \rightarrow {\mathcal C}_{\g^{(2)}}.$$
Note that we have already constructed an exact functor
${\mathcal F}_{Q}^{(1)}\col
\text{Rep}(R) \rightarrow {\mathcal C}_{\g^{(1)}}$
with the same quiver Hecke algebra $R$.

The main results of
this paper are the following:
\bnum
\item ${\mathcal F}_{Q}^{(2)}$ sends the simple objects to simple
objects.
\item ${\mathcal F}_{Q}^{(2)}$ gives a bijection between the set of
simple objects in $\text{Rep}(R)$ and the set of simple
objects in the full subcategory ${\mathcal C}_{Q}^{(2)}$ of $\shc_{\g^{(2)}}$ defined in
Definition \ref{def:CQ2}.
\item ${\mathcal F}_{Q}^{(2)}$ restricts to a faithful functor from
$\text{Rep}(R)$ to the category ${\mathcal C}_{Q}^{(2)}$.
\item ${\mathcal F}_{Q}^{(2)}$ induces a ring isomorphism
$\K(\text{Rep}(R)) \isoto \K({\mathcal C}_{Q}^{(2)})$.
\item
The ring isomorphism $\K({\mathcal
C}_{Q}^{(1)}) \isoto \K({\mathcal C}_{Q}^{(2)})$, which is
obtained by (iv) and the ring isomorphism
$\K(\text{Rep}(R)) \isoto \K({\mathcal C}_{Q}^{(1)})$,
preserves the dimension.
\end{enumerate}

 Note that a similar relationship between
$\mathcal{C}_{\g^{(1)}}$ and $\mathcal{C}_{\g^{(2)}}$ have been also
studied by Hernandez \cite{H01} with an approach using
$q$-character. He showed that there is a ring isomorphism between
the Grothendieck rings $K(\mathcal{C}_{\g^{(1)}})$ and
$K(\mathcal{C}_{\g^{(2)}})$, which sends  Kirillov-Reshetikhin
modules to  Kirillov-Reshetikhin modules. This is one of our
motivations. 

This paper is organized as follows. In Section 1, we briefly review
some of basic knowledge on quiver Hecke algebras and quantum affine
algebras. In Section 2, we recall the general construction of
KLR-type duality functors and its application to untwisted quantum
affine algebras. In Section 3, we investigate the relation between
untwisted and twisted quantum affine algebras. In Section 4, we
prove the main results of our paper. Section 5 deals with the
existence of non-zero $U_{q}'(\g)$-module homomorphisms between
evaluation modules of fundamental representations and their tensor products.

\vskip 3mm

\section{Quiver Hecke algebras and quantum affine algebras}

\subsection{Quantum groups associated with Cartan datum} In this subsection, we recall the definition of quantum groups.
Let $I$ be an index set. A Cartan datum $(\cm,\wl,\wl^\vee,\Pi,\Pi^\vee)$ consists of
\begin{enumerate}
\item a generalized Cartan matrix $\cm=(a_{ij})_{i,j \in I}$ satisfying
$$ a_{ii}=2 \ (i \in I), \quad a_{ij} \in \Z_{\le 0} \ (i \ne j) \quad \text{and} \quad a_{ij}=0 \iff a_{ji}=0,$$
\item a free abelian group $\wl$, called the {\em weight lattice},
\item $\wl^\vee= \Hom(\wl,\Z)$, called the {\em co-weight lattice},
\item $\Pi=\{ \alpha_i \ | \ i \in I\}$, called the {\em set of simple roots},
\item $\Pi^\vee=\{ h_i \ | \ i \in I\}$, called the {\em set of simple co-roots},
\end{enumerate}
such that
\begin{enumerate}
\item[{\rm (a)}] $\langle h_i,\alpha_j \rangle =a_{ij}$ for all $ij \in I$,
\item[{\rm (b)}] $\Pi$ and $\Pi^\vee$ are linearly independent sets,
\item[{\rm (c)}] for every $i \in I$, there exists $\Lambda_i \in \wl$ such that $\langle h_j,\Lambda_i\rangle =\delta_{i,j}$.
\end{enumerate}
We say that $\cm$ is {\em symmetrizable} if there exists a diagonal matrix
$\mathsf{D}={\rm diag}(\mathsf{s}_i  \in \Z_{>0} \ | \ i
\in I)$ such that $\mathsf{DA}$ is symmetric. We call $\rl\seteq \soplus_{i \in I} \Z
\alpha_i$ the {\em root lattice} and  $\rl^+\seteq \sum_{i \in
I} \Z_{\ge 0} \alpha_i$ the {\em positive root lattice}.
For $\beta = \sum_{i \in I} n_i \alpha_i \in \rl^+$, we set
$$\Ht(\beta) = \sum_{i \in I} n_i \in \Z_{\ge 0} \quad \text{and} \quad \supp(\beta) = \{ i \in I \ | \ n_i > 0 \}.$$

Let $\cm$ be a symmetrizable generalized Cartan matrix and let $q$ be an
indeterminate.
Let $( \ , \ )$ be a non-degenerate symmetric bilinear form on $\Q\otimes_\Z \wl$ satisfying
$\langle h_i, \lambda  \rangle = \dfrac{2(\alpha_i, \lambda)}{(\alpha_i, \alpha_i)}$ for every
$i \in I$ and $\lambda \in \Q\otimes_\Z \wl$.

 For $i \in I$ and $m\in \Z$, we set $q_i \seteq
q^{\frac{(\alpha_i, \alpha_i)}{2}}$, $[m]_i = (q_i^{m}-q_i^{-m})/(q_i-q_i^{-1})$ and $[m]_i! = \prod_{k=1}^m [k]_i$.
Let us denote by $d$ the smallest positive integer such that $d \frac{(\alpha_i, \alpha_i)}{2} \in \Z$ for all $i \in I$.

\begin{definition} The {\em quantum group} $U_q(\mathsf{g})$ associated with a Cartan datum $(\cm,\wl,\wl^\vee,\Pi,\Pi^\vee)$ is the associative $\Q(q^{1/d})$-algebra generated by
$e_i$, $f_i$ $(i \in I)$ and $q^h$ $(h \in d^{-1}  \wl^\vee)$ subject to the following relations:
\begin{enumerate}
\item[{\rm (i)}] $q^0=1, \ q^h q^{h'} = q^{h+h'}$ for $h,h' \in d^{-1}\wl^\vee$,
\item[{\rm (ii)}] $q^he_i = q^{\langle h,\alpha_i \rangle} e_iq^h$, $q^hf_i = q^{-\langle h,\alpha_i \rangle} f_iq^h$ for $h \in d^{-1}\wl^\vee$,
\item[{\rm (iii)}] $e_if_j-f_je_i = \dfrac{K_i-K_i^{-1}}{q_i-q_i^{-1}}$ for $K_i \seteq q^{ \frac{(\alpha_i, \alpha_i)}{2} h_i}$,
\item[{\rm (vi)}] $\displaystyle\sum_{k=0}^{1-a_{i,j}}(-1)^k e_i^{(1-a_{ij}-k)}e_je_i^{(k)}=\sum_{k=0}^{1-a_{i,j}}(-1)^kf_i^{(1-a_{ij}-k)}f_jf_i^{(k)}=0$ for $i \ne j$.
\end{enumerate}
Here $e_i^{(k)} \seteq e_i^k/[k]_i!$ and $f_i^{(k)} \seteq f_i^k/[k]_i!$ for $k \in \Z_{\ge 0}$.
\end{definition}

The quantum group $U_q(\mathsf{g})$ has a comultiplication $\Delta$ which is defined by
$$ \Delta(e_i)=e_i \tens K_i^{-1} + 1 \tens e_i, \ \ \Delta(f_i)=f_i \tens 1 + K_i \tens f_i \ \ \text{and} \ \ \Delta(q^h)=q^h \tens q^h.$$

\subsection{Quiver Hecke algebras}
In this subsection, we shall review symmetric quiver Hecke algebras.
Let $\ko$ be a field and $\cm=(a_{ij})_{i,j \in I}$ be a {\em symmetric}
Cartan matrix.
We denote by $\rl^+$ the positive root lattice associated with $\cm$.
We normalize the symmetric bilinear form $( \ , \ )$ by
$(\alpha_i,\alpha_j) =a_{ij}$.
We choose a matrix $\qQ^\cm=(\qQ_{ij})_{i,j \in I}$
\begin{equation}\label{eq: condition Q}
\begin{aligned}
\qQ_{i,j}(u,v) =
\bc c_{ij}(u-v)^{-a_{ij}}&\text{if $i\not=j$,}\\
 0&\text{if $i=j$.}\ec
\end{aligned}
\end{equation}
where $c_{ij}\in \ko^\times $ and we assume
$\qQ_{i,j}(u,v) = \qQ_{j,i}(v,u)$.

For $\beta \in \rl^+$ with $\Ht(\beta)=n$, we set
$$ I^\beta \seteq \{ \nu=(\nu_1,\ldots,\nu_n) \in I^n \ | \ \alpha_{\nu_1}+\ldots+\alpha_{\nu_n} = \beta \}. $$

The symmetric group $\mathfrak{S}_n = \langle s_i  \  | \ 1 \le i <n \rangle$ on $n$-letters acts on $I^n$ by place permutations in a natural way.

\begin{definition}[\cite{KL09,R08}]
For $\beta \in \rl^+$ with $\Ht(\beta)=n$, the {\em symmetric quiver Hecke
algebra} (or KLR-algebra) $R(\beta)$
 at $\beta$ associated with a matrix $\qQ^\cm$
is the $\ko$-algebra generated by three sets of generators
$$\{ x_k \ | \ 1 \le k \le n\}, \ \ \{ \tau_\ell \ | \ 1 \le \ell < n\}, \ \ \{ e(\nu) \ | \ \nu \in I^\beta\} $$
satisfying the following relations:
\begin{itemize}
\item[{\rm (i)}] $e(\nu) e(\nu') = \delta_{\nu,\nu'} e(\nu),\ \sum_{\nu \in I^{\beta}} e(\nu)=1$,
\item[{\rm (ii)}] $x_k e(\nu) =  e(\nu) x_k, \  x_k x_{k'} = x_{k'} x_k$,
\item[{\rm (iii)}] $\tau_\ell e(\nu) = e(s_\ell(\nu)) \tau_\ell,\  \tau_{\ell'} \tau_\ell = \tau_\ell \tau_{\ell'} \text{ if } |\ell - \ell'| > 1$,
\item[{\rm (iv)}]  $\tau_\ell^2 e(\nu) = \mathcal{Q}_{\nu_\ell, \nu_{\ell+1}}(x_\ell, x_{\ell+1}) e(\nu)$,
\item[{\rm (v)}]  $(\tau_\ell x_k - x_{s_\ell(k)} \tau_\ell ) e(\nu) =
\begin{cases}
-  e(\nu) & \hbox{if } k=\ell \text{ and } \nu_\ell = \nu_{\ell+1}, \\
e(\nu) & \hbox{if } k = \ell+1 \text{ and } \nu_\ell = \nu_{\ell+1},  \\
0 & \hbox{otherwise,}
\end{cases}$
\item[{\rm (vi)}] $( \tau_{\ell+1} \tau_{\ell} \tau_{\ell+1} - \tau_{\ell} \tau_{\ell+1} \tau_{\ell} )
e(\nu) =\delta_{\nu_{\ell},\nu_{\ell+2}}
\dfrac{\qQ_{\nu_\ell,\nu_{\ell+1}}(x_\ell,x_{\ell+1})-\qQ_{\nu_\ell,\nu_{\ell+1}}(x_{\ell+2},x_{\ell+1})}{x_{\ell}-x_{\ell+2}} e(\nu)$.
\end{itemize}
\end{definition}

Note that, for each $\alpha_i$, the quiver Hecke algebra $R(\alpha_i)$ is isomorphic to $\ko[x]$. Thus there exists an $R(\alpha_i)$-module $L(i)= \ko u(i)$
defined by
$x \cdot u(i) =0$.

We denote by $\Rep(R(\beta))$ the category of
finite-dimensional $R(\beta)$-modules $M$ such that
the actions of $x_k$ ($1\le k\le \Ht(\beta)$) on $M$ are nilpotent.

For $\beta,\gamma \in \rl^+$ with $\Ht(\beta)=m$ and $\Ht(\gamma)=n$, we define
the idempotent $e(\beta,\gamma)$ in $R(\beta+\gamma)$ by
$$e(\beta,\gamma)= \sum_{\nu \in I^\beta, \mu \in I^\gamma} e(\nu * \mu)$$
where $\nu * \mu=(\nu_1,\ldots,\nu_m,\mu_1,\ldots,\mu_n)$ for $\nu
\seteq(\nu_1,\ldots,\nu_m)\in I^\beta$ and $\mu\seteq(\mu_1,\ldots,\mu_n) \in I^\gamma$.

Let $\iota_{\beta,\gamma}$ be a $\ko$-algebra homomorphism
$$ \iota_{\beta,\gamma} \colon R(\beta) \tens R(\gamma) \Lto e(\beta,\gamma)R(\beta+\gamma)e(\beta,\gamma)  $$
given by
\begin{equation} \label{eq: iota beta gamma}
\begin{aligned}
& \iota_{\beta,\gamma}( e(\nu) \tens e(\mu) ) = e(\nu *\mu), \\
& \iota_{\beta,\gamma}(x_k \tens 1) = x_k e(\beta,\gamma) \ (1 \le k
\le m), \\
&  \iota_{\beta,\gamma}(1 \tens x_k) = x_{m+k} e(\beta,\gamma) \ (1 \le k \le n), \\
  & \iota_{\beta,\gamma}(\tau_k \tens 1) = \tau_k e(\beta,\gamma) \ (1 \le k < m), \\
 &\iota_{\beta,\gamma}(1 \tens \tau_k) = \tau_{m+k} e(\beta,\gamma) \ (1 \le k < n).
\end{aligned}
\end{equation}

For an $R(\beta)$-module $M$ and an $R(\gamma)$-module $N$, we define their {\em convolution product} $M \conv N$ as follows:
\begin{equation} \label{eq: convolution product}
M \conv N \seteq R(\beta+\gamma)e(\beta,\gamma) \tens_{R(\beta)\tens R(\gamma)} (M \tens N).
\end{equation}
Then the category $\Rep(R) \seteq \soplus_{\beta \in \rl^+}
\Rep(R(\beta))$ becomes a tensor category in the sense of \cite[Appendix A.1]{KKK13A} induced by the convolution product.
We denote by $\K(\Rep(R))$
the Grothendieck ring of the category $\Rep(R)$.

\subsection{Quantum affine algebras} In this subsection, we briefly recall the representation theory of the quantum affine algebra $U_q'(\g)$.
We refer to \cite{AK,CP94,Kas02} for the basics on the representation theory of quantum affine algebra $U_q'(\g)$.

\medskip

The affine Cartan datum $(\cm,\wl,\wl^\vee,\Pi, \Pi^{\vee})$ consists of
\begin{enumerate}
\item an affine Cartan matrix $\cm$,
\item a weight lattice $\wl = (\soplus_{i \in I} \Z \Lambda_i) \oplus \Z \delta$, where $\delta$ is the {\em imaginary root},
\item a set of simple roots $\Pi=\{ \alpha_i \ | \ i \in I\}$,  where
$$ \alpha_i = \begin{cases} \sum_{j \in I} a_{ji} \Lambda_j & \text{ if } i \in I_0,\\ \sum_{j \in I} a_{j0} \Lambda_j+\delta & \text{ if } i =0,\end{cases}$$
\item The simple coroots $h_i \in \wl^\vee$ are defined by
$\langle h_i, \Lambda_j\rangle=\delta_{i,j}$ and $\langle h_i, \delta \rangle =0$.
\end{enumerate}
Note that
$$ \Z \delta = \{ \lambda \in \rl \ | \ \langle h_i , \lambda \rangle =0 \text{ for all } i \in I \}.$$

Let us denote by $c = \sum_{i \in I} c_i h_i$ $(c_i \in \Z_{>0})$ the unique element in $\wl^\vee$ such that
$$ \Z c = \{ h \in \wl^\vee \ | \ \langle h ,\alpha_i \rangle =0 \text{ for all } i \in I \}.$$
Write $\delta = \sum_{i \in I} d_i \alpha_i$.
We take a symmetric bilinear form on $\Q \otimes_\Z \wl$ such that
$$(\alpha_i , \alpha_j) =c_i d_i^{-1} a_{ij} \quad (i,j \in I)
\quad\text{and $\langle h_i,\lambda\rangle=
\dfrac{2(\alpha_i,\lambda)}{(\alpha_i,\alpha_i)}$ for all $\lambda\in \wl$ and $i \in I$}.$$
Then we have $(\delta, \lambda) = \langle c, \lambda \rangle$ for all $\lambda\in \wl$.

We denote by
\begin{itemize}
\item $\g$ the affine Kac-Moody Lie algebra with an affine Cartan datum $(\cm,\wl,\wl^\vee,\Pi,\Pi^{\vee})$,
\item $U_q(\g)$ the quantum group associated with the affine Cartan datum $(\cm,\wl,\wl^\vee,\Pi,\Pi^{\vee})$,
\item $U_q'(\g)$ the subalgebra of $U_q(\g)$ generated by $e_i$, $f_i$ and $K_i^{\pm 1}$ for all $i \in I$.
\end{itemize}
We call $U_q'(\g)$ the {\em quantum affine algebra}.

Whenever we deal with the $U_q'(\g)$-modules,
we take the base field $\ko$ to be the algebraic closure of
$\C(q)$ in $\cup_{m >0} \C((q^{1/m}))$.

We take an element $0$ in  $I$
where $0$ denotes the leftmost vertices in the tables in \cite[pages 54, 55]{Kac} except $A^{(2)}_{2n}$-case in which we take the longest simple root
as $\alpha_0$. We set $I_0 \seteq I \setminus \{ 0\}$.

Let  $\g_0$ be
the finite-dimensional simple Lie subalgebra of $\g$ generated by $e_i$, $f_i$ for $i \in I_0$, called the {\em classical part} of $\g$,

Set $\wl_\cl \seteq \wl / \Z\delta$ and call it the {\em classical weight lattice}. Considering the canonical projection $\cl\col  \wl \to \wl_\cl$, we have
$$\wl_\cl = \soplus_{i \in I} \Z \cl(\Lambda_i).$$

We say that a $U_q'(\g)$-module $M$ is {\em integrable} if
\begin{itemize}
\item[{\rm (i)}] it is $\wl_\cl$-graded; i.e.,
$$M = \soplus_{\lambda \in \wl_\cl} M_\lambda \ \ \text{ where } M_\lambda=\{ u \in M \ | \ K_i u =q_i^{\langle h_i,\lambda \rangle} u \ \text{ for all } i \in I \},$$
\item[{\rm (ii)}] for all $i \in I$, $e_i$ and $f_i$ act on $M$ locally nilpotently.
\end{itemize}

We denote by $\Ca_\g$ the category of finite-dimensional integrable
$U_q'(\g)$-modules. Note that $\Ca_\g$ is a tensor category
 induced by the comultiplication $$\Delta|_{U_q'(\g)} \colon
U_q'(\g) \to U_q'(\g) \tens U_q'(\g).$$

A simple module $M$ in $\Ca_\g$ contains a non-zero vector $u$ of weight $\lambda\in \wl_\cl$ such that
\begin{itemize}
\item $\langle c,\lambda \rangle =0$ and $\langle h_i,\lambda \rangle \ge 0$ for all $i \in I_0$,
\item all the weight of $M$ are contained in $\lambda - \sum_{i \in I_0} \Z_{\ge 0} \cl(\alpha_i)$.
\end{itemize}
Such a $\la$ is unique and $u$ is unique up to a constant multiple.
We call $\lambda$ the {\em dominant extremal weight} of $M$ and $u$ the {\em dominant extremal weight vector} of $M$.

For a $\wl_\cl$-graded $U_q'(\g)$-module $M$, $M_\aff$ denotes the $\wl$-graded $U_q'(\g)$-module defined by
\begin{itemize}
\item[{\rm (i)}] $M_\aff=\soplus_{\lambda \in \wl}(M_\aff)_\lambda$ such that $(M_\aff)_\lambda \simeq (M)_{\cl(\lambda)}$,
\item[{\rm (ii)}] the actions of $e_i$ and $f_i$ are defined in such a way that they have weight $\pm\al_i$ and commute with the canonical projection
$\cl\col M_\aff \to M$.
\end{itemize}

For $x \in \ko^\times$ and $M \in \Ca_\g$, we define
\begin{align} \label{eq: aff}
M_x \seteq M_\aff / (z_M-x)M_\aff \end{align}
where $z_M$ denotes the $U_q'(\g)$-module automorphism of $M_\aff$ of weight $\delta$,
defined as the composition
$(M_\aff)_\lambda\isoto M_{\cl(\lambda)}\isoto (M_\aff)_{\lambda+\delta}$.
For each $i \in I_0$, we set $$\varpi_i \seteq {\rm
gcd}(c_0,c_i)^{-1}\cl(c_0\Lambda_i-c_i \Lambda_0) \in \wl_\cl.$$ Then there
exists a unique simple $U_q'(\g)$-module $V(\varpi_i)$ in $\Ca_\g$,
called the {\em fundamental module of weight $\varpi_i$}, satisfying
the following conditions:
\begin{itemize}
\item[{\rm (i)}] all the weights of $V(\varpi_i)$ are contained in the convex hull of $W_0 \varpi_i$,
\item[{\rm (ii)}] $\dim V(\varpi_i)_{\varpi_i}=1$ and $V(\varpi_i)= U_q'(\g) \cdot u_{\varpi_i}$,
\item[{\rm (iii)}] for any $\mu \in W_0 \varpi_i\subset\wl_\cl$, we can associate a non-zero vector $u_{\mu}$ of weight $\mu$ such that
$$ u_{s_j(\mu)} = \begin{cases} f_j^{(\langle h_j,\mu \rangle)} u_\mu & \text{ if } \langle h_j,\mu \rangle \ge 0 \\
e_j^{(- \langle h_j,\mu \rangle)}u_\mu & \text{ if } \langle h_j,\mu
\rangle \le 0\end{cases} \text{ for any $j \in I$}.$$
\end{itemize}
Here $W_0$ denotes the Weyl group
of $\g_0$ and $u_{\varpi_i}$ denotes the dominant extremal
weight vector of weight $\varpi_i$.

The module $V(\varpi_i)_x$ $(x\in \ko^\times)$ has the left dual
$\bl V(\varpi_i)_x\br^*$ and the right dual ${}^*\bl V(\varpi_i)_x\br$ with the following $U_q'(\g)$-module homomorphisms
(see \cite[\S 1.3]{AK}):
$$\bl V(\varpi_i)_x\br^* \tens V(\varpi_i)_x\To[\ {\mathrm{tr}}\ ] \ko \ \text{ and } \ V(\varpi_i)_x \tens {^*\bl V(\varpi_i)_x\br}\To[\ {\mathrm{tr}}\ ] \ko, $$
where
\begin{equation} \label{equation: psta}
\bl V(\varpi_i)_x\br^*  \simeq   V(\varpi_{i^*})_{x(p^*)^{-1}},
\ {}^*\bl V(\varpi_i)_x \br \simeq   V(\varpi_{i^*})_{xp^*} \ \  \text{ and } \ \
p^* \seteq (-1)^{\langle \rho^\vee ,\delta \rangle}q^{(\rho,\delta)}.
\end{equation}
Here $\rho$ (respectively,  $\rho^\vee$) denotes an element in $\wl$ (respectively,  $\wl^\vee$) such that
$\langle h_i,\rho \rangle=1$ (respectively,  $\langle \rho^\vee,\alpha_i  \rangle=1$) for all $i \in I$, and
$i\mapsto i^*$ denotes the involution on $I_0$ given by $\al_i=-w_0\,\al_{i^*}$,
where $w_0$ is the longest element of $W_0$.

We say that a $U_q'(\g)$-module $M$ is {\em good}
if it has a {\em bar involution}, a crystal basis with {\em simple
crystal graph}, and a {\em global basis} (see \cite{Kas02} for
precise definition). For instance, every fundamental module
$V(\varpi_i)$ for $i \in I_0$ is a good module. Note that
every good module is a simple $U_q'(\g)$-module. Moreover the tensor product of good modules is again good.
Hence any good module $M$ is {\it real} simple, i.e., $M\tens M$ is simple.

\vskip 3mm

\section{ KLR-type Schur-Weyl duality functors and Auslander-Reiten quiver}

In this section, we recall the {\em KLR-type
Schur-Weyl duality functor} $\mathcal{F}$ constructed in
\cite{KKK13A} which depends on the choice of a family of good
$U_q'(\g)$-modules and spectral parameters.  We also recall
the notion of Auslander-Reiten quiver $\Gamma_Q$ which was
used in \cite{KKK13B} to construct an exact functor
$\mathcal{F}^{(1)}_Q$ when $\g=A^{(1)}_{N}$ and $D^{(1)}_{N}$.

\subsection{KLR-type Schur-Weyl duality functors}
For simple $\U$-modules $M_1$ and $M_2$, there exists a unique $U_q'(\g)$-module
homomorphism
$$\Rnorm_{M_1,M_2}(z_1,z_2): (M_1)_\aff \tens (M_2)_\aff \to \ko(z_1,z_2)
\tens_{\ko[z^{\pm 1}_{1},z^{\pm 1}_{2}]} (M_{2})_\aff \tens (M_{1})_\aff $$
which sends $u_{1} \tens u_{s}$ to $u_{2} \tens u_{1}$.
Here $z_{k} \seteq z_{V_k}$ denotes the $U_q'(\g)$-automorphism of $(M_k)_\aff$ of weight $\delta$, and $u_k$ is a dominant extremal  weight 
vector of  $M_k$  ($k=1,2$).
We call the $U_q'(\g)$-homomorphism
$\Rnorm_{M_1,M_2}(z_{1},z_{2})$ the {\em normalized $R$-matrix} between $M_1$ and
$M_2$.

We denote by $d_{M_1,M_2}(z)$ the {\em denominator} of $\Rnorm_{M_1,M_2}(z_{1},z_{2})$ which is the monic
polynomial  of the smallest degree in $\ko[z]$ such that
\begin{align} \label{eq: denom}
d_{M_1,M_2}(z_{2}/z_{1})\Rnorm_{M_1,M_2}(z_{1},z_{2})\col  (M_1)_\aff \tens (M_2)_\aff
\to (M_2)_\aff \tens (M_1)_\aff.
\end{align}

Let $\{ V_s \}_{s \in \mathcal{S}}$ be a family of
good $U_q'(\g)$-modules labeled by an index set $\mathcal{S}$.
Let $u_s$ be a dominant extremal weight vector of $V_s$.

For a triple $(J,X,s)$ consisting of an index set $J$ and two maps $X\col J \to \ko^\times, \ s\col  J \to \mathcal{S}$,
we define a quiver $\Se^J=(\Se^J_0,\Se^J_1 )$ in the following way:
\begin{eqnarray}&&
\parbox{80ex}{
\begin{enumerate}
\item[{\rm (i)}] $\Se^J_0= J$.
\item[{\rm (ii)}] For $i,j \in J$, we put $\mathtt{d}_{ij}$ many arrows from $i$ to $j$, where $\mathtt{d}_{ij}$ is the order of the zero of
$d_{V_{s(i)},V_{s(j)}}(z)$ at $z=X(j)/X(i)$.
\end{enumerate}
} \label{eq: def Se^J}
\end{eqnarray}
Then $\Se^J$ is a quiver without loops and $2$-cycles, i.e.,
$\mathtt{d}_{ij}\mathtt{d}_{ji}=0$.
We define a symmetric Cartan matrix $\cm^J=(a_{ij})_{i,j \in
J}$ by
$$ a_{ij} = -\mathsf{d}_{ij}-\mathsf{d}_{ji} \text{ if } i \ne j, \text{ and } a_{ii}=2.$$

Let $\rl^+$ be the positive root lattice associated with
the Cartan matrix $\cm^J$. Then, for $\beta \in \rl^+$, we can
define the symmetric quiver Hecke algebra $R^J(\beta)$
associated with a matrix $\qQ^{\cm^J}=(\qQ_{ij})_{i,j \in J}$ such
that
$$ \qQ_{ij}(u,v)=(u-v)^{\mathsf{d}_{ij}}(v-u)^{\mathsf{d}_{ji}} \quad \text{ for all } i \ne j \in J.$$

\begin{theorem} [\cite{KKK13A}] \label{thm: QASWD functor}
For each $\beta \in \rl^+$, there exists a functor
$$ \Fun_\beta \colon \Rep(R^J(\beta)) \to \Ca_\g $$
enjoying the following properties:
\begin{enumerate}
\item[{\rm (a)}] $\Fun_{\alpha_i}(L(i)) \simeq (V_{s(i)})_{X(i)}$ for every $i \in J$,
 \  $\Fun_0(R^J(0)) \simeq \ko$  for every $n \in \Z$.

\item[{\rm (b)}] Set $$\Fun \seteq \soplus_{\beta \in \rl^+} \Fun_\beta \colon \Rep(R^J) \to  \Ca_\g.$$
Then for any $M \in \Rep(R^J(\beta))$ and $N \in \Rep(R^J(\gamma))$, there exists an
 $U'_q(\g)$-module  isomorphism $$ \Fun(M \conv N) \simeq  \Fun(M) \tens \Fun(N) $$
functorial in $M$ and $N$.
\item[{\rm (c)}] If the underlying graph of a quiver $\Se^J$ is of finite type $A$, $D$ or $E$, then the functor $\Fun$ is exact.
In particular, $\Fun$ induces a ring homomorphism
$$\K(\Rep(R^J)) \to \K(\Ca_{\g}).$$
\end{enumerate}
\end{theorem}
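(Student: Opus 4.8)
The plan is to realize $\Fun_\beta$ as the functor $M\mapsto\widehat{V}^{\otimes\beta}\tens_{R^J(\beta)}M$ for a suitable $\bl U_q'(\g),R^J(\beta)\br$-bimodule $\widehat{V}^{\otimes\beta}$, as in \cite{KKK13A}. For $j\in J$ put $\widehat{V}_j\seteq(V_{s(j)})_\aff$, a $U_q'(\g)$-module carrying the weight-$\delta$ automorphism $z_j$, and for $n=\Ht(\beta)$ set
$$\widehat{V}^{\otimes\beta}\seteq\soplus_{\nu\in J^\beta}\widehat{V}_{\nu_1}\tens\cdots\tens\widehat{V}_{\nu_n},$$
the left $U_q'(\g)$-action on the $\nu$-summand being the iterated comultiplication. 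All of the content of the theorem lies in equipping $\widehat{V}^{\otimes\beta}$ with a commuting \emph{right} $R^J(\beta)$-action realizing the defining relations of the quiver Hecke algebra.

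First I would define this action on generators. Let $e(\nu)$ act as the projection onto the $\nu$-summand and let $x_k\,e(\nu)$ act as $z_k-X(\nu_k)$ on the $k$-th tensor factor of that summand. For $\tau_k\,e(\nu)$ I would take a renormalization of the normalized $R$-matrix $\Rnorm_{V_{s(\nu_k)},V_{s(\nu_{k+1})}}(z_k,z_{k+1})$, composed with the flip, acting on the $k$-th and $(k+1)$-th factors: when $\nu_k\ne\nu_{k+1}$ one multiplies $\Rnorm$ by the power of $z_{k+1}/z_k-X(\nu_{k+1})/X(\nu_k)$ that clears its pole there, the admissible power being $\mathtt{d}_{\nu_k\nu_{k+1}}$, the order of the zero of the denominator $d_{V_{s(\nu_k)},V_{s(\nu_{k+1})}}$ of \eqref{eq: denom} at that point as recorded in \eqref{eq: def Se^J}, and the normalization is pinned down so that relation~(iv) reproduces the chosen $\qQ^{\cm^J}$; when $\nu_k=\nu_{k+1}$ one uses instead a Demazure-type divided-difference modification of the $R$-matrix on those two factors. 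One caveat: $\tau_k$ is a priori valued in a localization of $\widehat{V}^{\otimes\beta}$ at the remaining zeros of the denominators, so it is an honest operator only on the completion of $\widehat{V}^{\otimes\beta}$ along the locus where $z_k=X(\nu_k)$; this is harmless, since the $x_k$ act nilpotently on every $M\in\Rep(R^J(\beta))$ and so $\widehat{V}^{\otimes\beta}\tens_{R^J(\beta)}M$ is unaffected by passing to that completion.

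Next I would check the defining relations of $R^J(\beta)$. Relations~(i), (ii) and the disjoint-support commutations in~(iii) are immediate from the tensor construction. Relation~(v) follows from the characterizing property of $\Rnorm$ — it sends dominant extremal vectors $u\tens u'$ to $u'\tens u$ — together with the way $\tau_k$ intertwines the shifts by $z_k$ and $z_{k+1}$; relation~(iv) is the ``unitarity'' identity $\Rnorm_{V,W}(z_1,z_2)\circ\Rnorm_{W,V}(z_2,z_1)=\mathrm{id}$ read through the renormalization, which is precisely where the normalization of $\qQ^{\cm^J}$ is used. The main obstacle is the deformed braid relation~(vi): both $\Rnorm_{1,2}\Rnorm_{1,3}\Rnorm_{2,3}$ and $\Rnorm_{2,3}\Rnorm_{1,3}\Rnorm_{1,2}$ are $U_q'(\g)$-homomorphisms of the (generically simple) triple tensor product sending the product of extremal vectors to its reverse, hence equal, which gives the Yang--Baxter equation; but extracting the divided-difference right-hand side of~(vi) from it requires a precise bookkeeping of the positions and orders of the poles and zeros of the three $R$-matrices over triples of fundamental modules. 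That $R$-matrix analysis, carried out in \cite{KKK13A}, is the technical heart of the theorem.

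Finally I would read off the stated properties. Since the right $R^J(\beta)$-action commutes with $U_q'(\g)$, $\Fun_\beta(M)=\widehat{V}^{\otimes\beta}\tens_{R^J(\beta)}M$ is a $U_q'(\g)$-module, and it is integrable because each $\widehat{V}_j$ is; it is finite-dimensional for $M\in\Rep(R^J(\beta))$ because nilpotence of the $x_k$ on $M$ forces $(z_k-X(\nu_k))^N$ to annihilate its image for $N\gg0$, so $\Fun_\beta(M)$ is a quotient of $\soplus_\nu\bl\bigotimes_k\widehat{V}_{\nu_k}/(z_k-X(\nu_k))^N\widehat{V}_{\nu_k}\br\tens_{R^J(\beta)}M$, which is finite-dimensional. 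Property~(a) is the direct computation $\Fun_{\alpha_i}(L(i))=\widehat{V}_i/x\widehat{V}_i=(V_{s(i)})_{X(i)}$ via \eqref{eq: aff}, together with $\Fun_0(R^J(0))=\widehat{V}^{\otimes 0}=\ko$. For~(b), the key point is the identification $\widehat{V}^{\otimes(\beta+\gamma)}e(\beta,\gamma)\simeq\widehat{V}^{\otimes\beta}\tens\widehat{V}^{\otimes\gamma}$ as $\bl U_q'(\g),R^J(\beta)\tens R^J(\gamma)\br$-bimodules, compatible with $\iota_{\beta,\gamma}$ and with the coassociativity of $\Delta$, whence
$$\Fun(M\conv N)=\widehat{V}^{\otimes(\beta+\gamma)}e(\beta,\gamma)\tens_{R^J(\beta)\tens R^J(\gamma)}(M\tens N)\simeq\Fun(M)\tens\Fun(N),$$
the isomorphism being visibly functorial in $M$ and $N$. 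For~(c), exactness of $\Fun$ when the underlying graph of $\Se^J$ is of type $A$, $D$ or $E$ is the theorem of \cite{Kim12}, and an exact monoidal functor induces, via~(b), a ring homomorphism $\K(\Rep(R^J))\to\K(\Ca_\g)$.
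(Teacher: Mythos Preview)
The paper does not give its own proof of this theorem: it is quoted verbatim from \cite{KKK13A} (with part (c) attributed to \cite{Kim12}) and used as a black box. Your sketch is a faithful outline of the construction carried out in \cite{KKK13A}: the bimodule $\widehat{V}^{\otimes\beta}$, the action of $e(\nu)$, $x_k$, $\tau_k$ via renormalized $R$-matrices, the need to complete along $x_k=0$, and the derivation of (a)--(c) are all as in that reference, so there is nothing substantive to compare against here.
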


 We call  $\Fun$ the {\em   KLR-type Schur-Weyl duality
functor}.

\subsection{Auslander-Reiten quiver $\Gamma_Q$} In this subsection, we briefly review the combinatorial feature of the Auslander-Reiten quiver $\Gamma_Q$
of a Dynkin quiver $Q$ of a finite-dimensional simply-laced simple Lie algebra $\g_0$.

\medskip

Throughout this subsection, we denote by $w_0$ the longest element
of the Weyl group $W_0$ of $\g_0$, $\Pi_{\g_0}$ the set of simple roots of $\g_0$
and $\Delta^+_{\g_0}$ (respectively, $\Delta^-_{\g_0}$) the set of
positive (respectively, negative) roots of $\g_0$.

\medskip
Let
$Q=(Q_0,Q_1)$ be a Dynkin quiver of $\g_0$.
For a vertex $i$, we denote by $s_iQ$ the quiver obtained from $Q$ by
reversing all arrows in $Q_1$ incident to $i$.
We say that a vertex $i \in Q_0$ is a {\em source} (respectively,
{\em sink}) if every arrow incident to $i$ starts (respectively,  ends) at
$i$.
For a reduced expression $\widetilde{w}=s_{i_1} \cdots s_{i_l}$ of $w \in W_0$, we say that $\widetilde{w}$ is {\em adapted to} $Q$ if
$$\text{ $i_k$ is a source of the quiver $s_{i_{k-1}} \cdots s_{i_1}Q$  for all $1 \le k \le l$}.  $$
It is known that there exists a unique Coxeter element $\tau$
which has a reduced expression adapted to $Q$.
In fact, all the reduced expressions of such a $\tau$ are adapted to $Q$.

A map $\xi\col  Q_{0} \to \Z$ is called a {\em height function}
if $\xi_i=\xi_j+1$ if there exists an arrow $i \to j \in Q_1$. Since
$Q$ is connected, any pair $\xi$ and $\xi'$ of height functions differ by
a constant. We fix such a height function and
we define the repetition quiver $\widehat{Q}$ as follows:
\begin{itemize}
\item[{\rm (i)}] $\widehat{Q}_0 =\{ (i,p) \in Q_0 \times \Z \ | \ p -\xi_i \in 2\Z\}$,
\item[{\rm (ii)}] $\widehat{Q}_1 =\{(i,p) \to (j,p+1) \ | \
\text{$i$ and $j$ are adjacent in the Dynkin diagram of $\g_0$} \}$.
\end{itemize}
Note that there is a path from $(i,p)$ to $(j,r)$ in $\widehat Q$ if and only if $d(i,j) \le r-p$, where $d(i,j)$ is the distance of $i$ and $j$ in the
Dynkin diagram except the case $\g_0\simeq A_1$.

Set $\widehat{\Delta}_{\g_0} \seteq \Delta^+_{\g_0} \times \Z$. For $i \in I_0$, we define an element $\gamma_i \in \Delta^+_{\g_0}$ as follows:
\begin{equation} \label{eq: gamma_i}
\gamma_i = \sum_{j \in B(i)} \alpha_j,
\end{equation}
where $B(i)$ denotes the set of vertices $j$ such that there exists a path from $j$ to $i$ in $Q$.

We define a bijection $\phi\col  \widehat{Q}_0 \to \widehat{\Delta}_{\g_0}$ in an inductive way (\cite[\S 2.2]{HL11}):
\begin{eqnarray} &&
\parbox{80ex}{
\begin{enumerate}
\item[{\rm (i)}] $\phi(i,\xi_i)=(\gamma_i,0)$,
\item[{\rm (ii)}] for a given $\beta \in \Delta^+_{\g_0}$ with $\phi(i,p)=(\beta,m)$,
\begin{itemize}
\item if $\tau(\beta) \in \Delta^+_{\g_0}$, then $\phi(i,p-2) \seteq (\tau(\beta),m)$,
\item if $\tau(\beta) \in \Delta^-_{\g_0}$, then $\phi(i,p-2) \seteq (-\tau(\beta),m-1)$,
\item if $\tau^{-1}(\beta) \in \Delta^+_{\g_0}$, then $\phi(i,p+2) \seteq (\tau^{-1}(\beta),m)$,
\item if $\tau^{-1}(\beta) \in \Delta^-_{\g_0}$, then $\phi(i,p+2) \seteq (-\tau^{-1}(\beta),m+1)$.
\end{itemize}
\end{enumerate}
} \label{eq: phi}
\end{eqnarray}

The {\em Auslander-Reiten quiver} $\Gamma_Q$ is the full subquiver
of $\widehat{Q}$ whose set of vertices is $\phi^{-1}(\Delta^+_{\g_0}
\times  \{0\})$.

We say that a (partial) order $\preceq$ on $\Delta^+_{\g_0}$ is {\em convex}, if
it satisfies the condition:
\eq&&
\parbox{70ex}
{either $\beta \preceq \beta+\gamma \preceq \gamma$
or $\gamma \preceq \beta+\gamma \preceq \beta$
whenever $ \beta, \gamma \in \Delta^+_{\g_0}$ satisfy
$\beta+\gamma \in \Delta^+_{\g_0}$.}\eneq
The Auslander-Reiten quiver $\Gamma_Q$ defines a convex order $\preceq_Q$ on $\Delta^+_{\g_0}$ as follows (cf.\ \cite{R96}):
\begin{align} \label{eq: preceq Q}
\parbox{60ex}{$\beta \preceq_Q \gamma$ if and only if $d(i,j)\le a-b$,}
\end{align}
where $\phi^{-1}(\beta,0)=(i,a)$ and $\phi^{-1}(\gamma,0)=(j,b)$.
Note that (\cite{B99,R96}, see also \cite[\S 1]{Zelikson})  
\eq
&&\parbox{73ex}{$\beta\preceq_Q\gamma$ if and only if
there exists a path from $\phi^{-1}(\gamma,0)$ to $\phi^{-1}(\beta,0)$ in $\Gamma_Q$.}\label{eq:Qorder}
\eneq
On the other hand, for an arbitrary finite-dimensional simple Lie algebra $\g_0$,
any  reduced expression $\redex=s_{i_1}\cdots s_{i_r}$ of the longest element $w_0$
of $\g_0$ induces a convex total order $\le_{\redex}$ on
$\Delta^+_{\g_0}$
$$ \beta_k \le_{\redex} \beta_l \text{ if and only if } k\le l.$$
Here $\beta_k \seteq s_{i_1} \cdots s_{i_{k-1}} \alpha_{i_k}$ ($1 \le k \le r$).
Conversely, any convex total order is obtained in this way by
a reduced expression of $w_0$ (\cite{Papi}).

It is proved in \cite{B99} that,
for every reduced expression $\redex$ of $w_0$ adapted to $Q$,
the convex total order induced by $\redex$
is stronger than the order $\preceq_Q$,
i.e., for $\beta,\gamma \in \Delta^+_{\g_0}$,
$ \beta \preceq_{Q} \gamma$ implies $\beta  \le_{\redex} \gamma$.

For a convex order $\preceq$  and a positive root $\al$,
a pair of positive roots $(\beta,\gamma)$ is called a
{\em minimal pair of $\al$} if $\beta+\gamma=\al$,
$\beta\preceq\al\preceq\gamma$
 and
there exists no pair $(\beta',\gamma')$ other than $(\beta,\gamma)$
such that
$\beta'+\gamma'=\al$ and
$\beta\preceq\beta'\preceq\al\preceq\gamma'\preceq\gamma$.

\subsection{The functor $\QFun{1}$} In this subsection, we briefly review the properties of the functor $\QFun{1}$ which were investigated in \cite{KKK13B}.

\medskip

For simplicity, we denote by $d_{k,l}(z)$ or $d_{k,l}^{\;\g}(z)$ the denominator
$d_{V(\varpi_{k}),V(\varpi_l)}(z)$ between the fundamental
$U_q'(\g)$-modules $V(\varpi_k)$ and $V(\varpi_l)$ ($k,l \in I_0$)
(see \eqref{eq: denom}).

In \cite{AK,KKK13B}, the denominators $d_{k,l}(z)$ for $\g=A^{(1)}_{N}$ and $D^{(1)}_{N}$ are computed as follows. Note that we have $d_{k,l}(z)=d_{l,k}(z)$.
\begin{theorem} \label{thm: denominator of untwisted} \hfill
\begin{enumerate}
\item[{\rm (a)}] For $\g = A^{(1)}_{N}$ $(N \ge 2)$ and $1 \le k , l \le N$, we have
\begin{align} \label{eq: denominator A1}
d_{k,l}^{\;A^{(1)}_{N}}(z) = \prod_{s=1}^{\min(k,l,N+1-k,N+1-l)} (z-(-q)^{|k-l|+2s}).
\end{align}
\item[{\rm (b)}] For $\g = D^{(1)}_{N}$ $(N \ge 4)$ and $1 \le k , l \le N$, we have
\begin{equation} \label{eq: denominator D1}
\begin{aligned}
& d_{k,l}^{\; D^{(1)}_{N}}(z)= \begin{cases}
\displaystyle \prod_{s=1}^{\min (k,l)} \hspace{-1ex}(z-(-q)^{|k-l|+2s})(z-(-q)^{2N-2-k-l+2s}) & \text{ if } 1 \le k,l \le N-2, \allowdisplaybreaks \\
\ \displaystyle \prod_{s=1}^{k}(z-(-q)^{N-k-1+2s}) &  \hspace{-15ex}\text{ if }  1 \le k \le N-2 \text{ and } l \in \{ N-1, N\}, \allowdisplaybreaks \\
\ \displaystyle \prod_{s=1}^{\lfloor \frac{N-1}{2} \rfloor} (z-(-q)^{4s}) &  \text{ if }   k \neq l \in \{N,N-1\},  \allowdisplaybreaks  \\
\ \displaystyle \prod_{s=1}^{\lfloor \frac{N}{2} \rfloor} (z-(-q)^{4s-2}) &  \text{ if }  k=l \in \{ N-1, N\}.
 \end{cases}
\end{aligned}
\end{equation}
\end{enumerate}
\end{theorem}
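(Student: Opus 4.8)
The plan is to reduce every denominator to the single ``seed'' denominator $d_{1,1}(z)$ and then propagate it through tensor products and duality. The first step is the direct computation of $d_{1,1}(z)$. For generic spectral parameter the normalized $R$-matrix $\Rnorm_{V(\varpi_1),V(\varpi_1)}(z)$ is an isomorphism, and the degree of its denominator equals one less than the number of irreducible summands of the classical tensor square of the vector representation --- namely $2$, from $V(2\varpi_1)\oplus V(\varpi_2)$, in type $A$, and $3$, from $V(2\varpi_1)\oplus V(\varpi_2)\oplus V(0)$, in type $D$ (the extra trivial summand being carried by the invariant symmetric form on the $2N$-dimensional vector representation). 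Locating the pole(s) by tracking the scalars through which $\Rnorm$ acts on the extremal summands gives $d_{1,1}^{A_N^{(1)}}(z)=z-(-q)^2$ and $d_{1,1}^{D_N^{(1)}}(z)=(z-(-q)^2)(z-(-q)^{2N-2})$. This is the only genuinely computational input; the remaining seeds needed in type $D$ are obtained the same way, from the decompositions of $V(\varpi_1)\tens V(\varpi_N)$, $V(\varpi_N)\tens V(\varpi_N)$ and $V(\varpi_N)\tens V(\varpi_{N-1})$, whose numbers of summands yield the degrees $1$, $\lfloor N/2\rfloor$ and $\lfloor (N-1)/2\rfloor$.

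Next, for the upper bound on a general $d_{k,l}(z)$ I would use the realization of the fundamental modules inside tensor powers of $V(\varpi_1)$ and the spin modules: for $1\le k\le N$ in type $A$ and $1\le k\le N-2$ in type $D$, the module $V(\varpi_k)$ is a subquotient of $V(\varpi_1)_{(-q)^{k-1}}\tens V(\varpi_1)_{(-q)^{k-3}}\tens\cdots\tens V(\varpi_1)_{(-q)^{1-k}}$, while in type $D$ the spin modules $V(\varpi_{N-1}),V(\varpi_N)$ enter through the decompositions of $V(\varpi_1)\tens V(\varpi_N)$ and $V(\varpi_N)\tens V(\varpi_N)$, so that every $V(\varpi_l)$ is a subquotient of a tensor product of copies of $V(\varpi_1)$ and the two spin modules. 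The general principle --- from \cite{AK}, also used in \cite{KKK13A} --- that the denominator $d_{M,L}(z)$ of a subquotient $M$ of $L_1\tens\cdots\tens L_r$ divides, after the appropriate spectral shifts, $\prod_i d_{L_i,L}(z)$, then bounds $d_{k,l}(z)$ above by an explicit product of linear factors; the arithmetic of the shifts produces exactly the exponents $|k-l|+2s$, together --- in type $D$ --- with the second family $2N-2-k-l+2s$ coming from the ``long'' factor $z-(-q)^{2N-2}$ of $d_{1,1}^{D_N^{(1)}}$. To pin down the range of $s$ I would combine this with duality: $V(\varpi_k)^*\simeq V(\varpi_{k^*})_{(\cdot)}$ with $k^*=N+1-k$ in type $A$ and $k^*\in\{k,\,N-k\}$ in type $D$, together with the identities relating $d_{M,N}$, $d_{N,M}$ and the denominators of left and right duals; this symmetry rules out poles outside the stated range and yields the cutoffs $\min(k,l,N+1-k,N+1-l)$ in type $A$ and $\min(k,l)$ (respectively the $\lfloor\cdot\rfloor$ bounds at the spin nodes) in type $D$.

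For the lower bound --- that the displayed product is the denominator and not merely a multiple of it --- one must show that each claimed zero $z=(-q)^m$ really is a pole of $\Rnorm_{V(\varpi_k),V(\varpi_l)}$; equivalently, that at the corresponding specialized parameters the tensor product $V(\varpi_k)_a\tens V(\varpi_l)_b$ is reducible, so that the image of the renormalized $R$-matrix is a proper submodule and $d_{k,l}$ must vanish there. I would prove these reducibility statements by exhibiting explicit nonzero $\U$-module homomorphisms among the relevant evaluation modules and tensor products of fundamental modules --- the same style of argument carried out in Section~5 for the twisted modules, and, for the present untwisted modules, in \cite{AK}. Supplying one such homomorphism per claimed factor forces every factor to appear; as the upper and lower bounds then coincide, the formulas follow. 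The symmetry $d_{k,l}(z)=d_{l,k}(z)$ is a general property of normalized $R$-matrices between good modules and requires no extra argument.

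The main obstacle is the lower bound in type $D$: there one must produce reducibility simultaneously along the two separate families of points $|k-l|+2s$ and $2N-2-k-l+2s$, and deal with the spin nodes, where the pertinent reducible tensor products are subtler and the exact count of factors --- the $\lfloor\frac{N-1}{2}\rfloor$ and $\lfloor\frac N2\rfloor$ --- depends on the parity of $N$. Obtaining the precise degree there, rather than only the divisibility bound coming from the tensor-product realization, is where essentially all the work lies, and is precisely where the explicit homomorphism constructions are indispensable.
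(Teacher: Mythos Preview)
The paper does not prove this theorem: it is stated with the attribution ``In \cite{AK,KKK13B}, the denominators $d_{k,l}(z)$ for $\g=A^{(1)}_{N}$ and $D^{(1)}_{N}$ are computed as follows'' and no argument is given. So there is no in-paper proof to compare against; your sketch is to be measured against the cited references, and in broad outline it does follow the strategy of \cite{AK} (type $A$) and \cite{KKK13B} (type $D$): compute the seed denominators for the vector (and, in type $D$, spin) representations directly, propagate an upper bound through the realization of $V(\varpi_k)$ as a subquotient of a tensor product of copies of $V(\varpi_1)$ via the divisibility principle of \cite{AK}, and then establish the lower bound by producing explicit nontrivial morphisms at each candidate pole.

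Two points deserve tightening. First, the claim that the degree of $d_{1,1}$ equals ``one less than the number of irreducible summands of the classical tensor square'' is a heuristic, not a theorem; what one actually does in \cite{AK} is write $\Rnorm$ as a sum of projectors onto the $U_q(\g_0)$-isotypic components with explicit rational scalar coefficients and read off the poles from those scalars. Second, your lower-bound step as written has a gap: reducibility of $V(\varpi_k)_a\tens V(\varpi_l)_b$ at $b/a=z_0$ only tells you that the specialized $\Rnorm$ is either undefined or not bijective, hence that $z_0$ is a zero of $d_{k,l}(z)$ \emph{or} $z_0^{-1}$ is a zero of $d_{l,k}(z)$; the symmetry $d_{k,l}=d_{l,k}$ does not by itself resolve which of $z_0$, $z_0^{-1}$ carries the zero. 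In the references this is handled not by bare reducibility but by exhibiting a surjection $V(\varpi_k)_a\tens V(\varpi_l)_b\twoheadrightarrow V(\varpi_m)_c$ (the ``Dorey-type'' morphisms, cf.\ Proposition~\ref{prop: Dorey untwisted}), which via duality and the universal-coefficient relations of \cite{AK} forces a zero of $d_{k,l}$ at the specific point $z_0$ rather than its inverse. Once you phrase the lower bound that way, your plan matches the literature.
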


The following theorem is one of the main results in \cite{KKK13B}.

\begin{theorem} [{\cite[Theorem 4.3.1]{KKK13B}}] \label{thm: simply laced Se^J }
Let $\g\uo$ be an affine Kac-Moody algebra of type $A^{(1)}_{N}$ or
$D^{(1)}_{N}$.
We take the index set
$$J \seteq \phi^{-1}(\Pi_{\g_0\uo} \times \{0 \})$$
and two maps $s\uo\col  J \to I\uo_0  \seteq \{ 1,2,\ldots,N \}$ and $X\uo\col J \to \ko^{\times}$ such that
$$s\uo(i,p)=i \quad \text{and} \quad X\uo(i,p)=(-q)^{p}.$$
Then the quiver $\Se^J$ in \eqref{eq: def Se^J}
coincides with the quiver $Q^{\rm{rev}}$ obtained from $Q$ by reversing all the arrows.
\end{theorem}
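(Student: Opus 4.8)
The plan is to pin down the identification of $J$ with $I_0\uo$ completely explicitly and then read the quiver $\Se^J$ directly off the denominator formulas of Theorem~\ref{thm: denominator of untwisted}. For $i\in I_0\uo$ write $\phi^{-1}(\alpha_i,0)=(k_i,p_i)\in\widehat{Q}_0$, and identify $J=\phi^{-1}(\Pi_{\g_0\uo}\times\{0\})$ with $I_0\uo$ via $i\leftrightarrow(k_i,p_i)$. By \eqref{eq: def Se^J}, since $s\uo(k_i,p_i)=k_i$ and $X\uo(k_i,p_i)=(-q)^{p_i}$, the number $\mathtt{d}_{ij}$ of arrows $\phi^{-1}(\alpha_i,0)\to\phi^{-1}(\alpha_j,0)$ in $\Se^J$ equals the order of vanishing of $d_{k_i,k_j}^{\,\g\uo}(z)$ at $z=(-q)^{p_j-p_i}$. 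Since every zero of $d_{k,l}^{\,\g\uo}$ is a \emph{positive} integral power of $-q$, such an arrow forces $p_j>p_i$; hence $\Se^J=Q^{\rev}$ will follow once I establish \emph{(a)} for $i\ne j$, $d_{k_i,k_j}^{\,\g\uo}\bigl((-q)^{p_j-p_i}\bigr)=0$ with $p_j>p_i$ precisely when $j\to i$ is an arrow of $Q$, and \emph{(b)} in that case the order of vanishing is exactly $1$ (so that also no multiple arrows occur).

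The main step is to determine the positions $(k_i,p_i)$, i.e.\ to locate the simple roots inside $\Gamma_Q$. From the inductive definition \eqref{eq: phi} of $\phi$, the part of $\Gamma_Q$ lying in the column of a vertex $k$ consists exactly of the vertices $(k,\xi_k),(k,\xi_k-2),\dots,(k,\xi_k-2c_k)$ carrying the roots $\gamma_k,\tau\gamma_k,\dots,\tau^{c_k}\gamma_k$, where $c_k$ is maximal with $\tau^{c}\gamma_k\in\Delta^+_{\g_0}$ for all $0\le c\le c_k$ (and $\tau^{-1}\gamma_k\notin\Delta^+_{\g_0}$). Thus for each $i$ there is a unique column $k$ and a unique $c\ge 0$ with $\tau^{c}\gamma_k=\alpha_i$, and then $(k_i,p_i)=(k,\xi_k-2c)$; in particular $(k_i,p_i)=(i,\xi_i)$ whenever $i$ is a source of $Q$. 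Using the explicit ``staircase'' description of $\Gamma_Q$ in types $A_N$ and $D_N$ I would turn this into a closed formula for $(k_i,p_i)$ for an arbitrary Dynkin quiver $Q$; it is convenient to argue by induction on the number of sinks of $Q$, since passing from $Q$ to $s_\ell Q$ at a source $\ell$ only moves $(\ell,\xi_\ell)$ from the top of its column to the bottom and transports the rest of the picture unchanged, so one may reduce to a single reference orientation in each type.

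With the $(k_i,p_i)$ in hand, (a) and (b) become a finite verification against \eqref{eq: denominator A1} and \eqref{eq: denominator D1}. The key numerical facts are: the smallest-degree zero of $d_{k,l}^{\,\g\uo}(z)$ is $(-q)^{d(k,l)+2}$ (uniformly, including the three special forms of \eqref{eq: denominator D1} at the fork nodes $N-1,N$), and it is a \emph{simple} zero; and whenever $j\to i$ is an arrow of $Q$ one has $p_j>p_i$ with $p_j-p_i=d(k_i,k_j)+2$, which is precisely that smallest zero. This yields (a) and (b) for Dynkin-adjacent $i,j$. For $i,j$ \emph{not} adjacent in the Dynkin diagram one checks that the power $p_j-p_i$ of $-q$ (with the sign making it positive, the other case being automatic since the zeros are positive powers) is not among the exponents occurring in $d_{k_i,k_j}^{\,\g\uo}$: it is either strictly below $d(k_i,k_j)+2$, or, in type $D_N$, strictly between the first and second families of zeros, or beyond the last one; hence $\mathtt{d}_{ij}=\mathtt{d}_{ji}=0$. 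Assembling these cases gives $\Se^J=Q^{\rev}$.

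I expect the genuine difficulty, and where most of the effort goes, to be the second step together with the type-$D$ part of the third: writing down $(k_i,p_i)$ uniformly for all orientations, controlling the behaviour near the fork nodes $N-1,N$ where $d_{k,l}^{\,D_N^{(1)}}$ changes shape, verifying that Dynkin-adjacent simple roots never land on the same $p$-coordinate, and ruling out the ``long-range'' coincidences of powers of $-q$ that could in principle produce a spurious, or a higher-order (hence multiple), arrow in $\Se^J$.
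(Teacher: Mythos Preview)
The present paper does not contain a proof of this theorem: it is quoted verbatim as \cite[Theorem 4.3.1]{KKK13B} in the review subsection~2.3, and no argument for it is given here. So there is nothing in this paper to compare your proposal against.

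That said, your outline is a reasonable direct strategy and is in the spirit of how such statements are established: identify $\phi^{-1}(\alpha_i,0)=(k_i,p_i)$ explicitly, and then read off $\mathtt{d}_{ij}$ from the denominator tables of Theorem~\ref{thm: denominator of untwisted}. The genuine work, as you correctly anticipate, is the bookkeeping for arbitrary orientations (especially in type $D_N$ near the fork) and the exclusion of spurious zeros for non-adjacent $i,j$. If you want to carry this out, the reduction you sketch via source reflections $Q\mapsto s_\ell Q$ is the standard device; alternatively, one can bypass the explicit formula for $(k_i,p_i)$ by using the convex order $\preceq_Q$ on $\Delta^+_{\g_0\uo}$ together with \eqref{eq:Qorder} to control $p_j-p_i$ for simple roots $\alpha_i,\alpha_j$, which streamlines the non-adjacent case. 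Either way, the actual proof lives in \cite{KKK13B}, not in this paper.
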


For a given Dynkin quiver $Q$ of $\g\uo_0$,
we set
$$V_Q(\beta)\seteq V(\varpi_{i})_{(-q)^p} \quad \text{where} \ \phi^{-1}(\beta,0)=(i,p).$$

\begin{definition}[\cite{HL11}]
The category $\Ca^{(1)}_Q$ is defined to be the smallest full
subcategory of $\Ca_{\g\uo}$ such that
\bnam
\item it contains $V_Q(\beta)$ for all $\beta \in \Delta^+_{\g_0}$ and the trivial representation,
\item it is stable under taking tensor products, subquotients and
extensions.
\ee
\end{definition}

We define an order $\prec_\Z$ on $\Z^r_{\ge 0}$ $(r \in \Z_{>0})$ as follows:
\begin{eqnarray}&&
\parbox{75ex}{
$\um'=(m'_1,\ldots,m'_r) \prec_\Z  \um=(m_1,\ldots,m_r) $ if and only if there exist integers $k$, $s$ such that $1 \le k \le s \le r$,
$m_t'=m_t$ $(t<k)$, $m'_k  <  m_k$, and $m_t'=m_t$ $(s<t\le r)$,
$m'_{s}  <  m_{s}$.
} \label{eq: prec Z}
\end{eqnarray}

\begin{theorem}[{\cite{Mc12}, see also \cite{Kato}}]   \label{thm: PBW}
Let $\g_0$ be a  finite-dimensional simple Lie algebra. We fix a reduced expression $\redex=s_{i_1}\cdots s_{i_r}$ of the longest element
$w_0 \in W_0$, and set $\beta_k=s_{i_1}\cdots s_{i_{k-1}}\al_{i_k}$.
Let $R$ be the quiver Hecke algebra corresponding to
$\g_0$. Then for every $\beta_k \in \Delta^+_{\g_0}$, there exists a
simple $R(\beta_k)$-module $S_{\redex}(\beta_k)$ satisfying the
following properties:
\bnum
\item For every $\um=(m_1,\ldots,m_r) \in \Z_{\ge 0}^r$, there exists a non-zero $R$-module homomorphism
\eqn&&\rmat{\um} \colon
\overset{\Lto}{\nabla}_{\redex}(\um) \seteq
S_{\redex}(\beta_1)^{\conv m_1}\conv\cdots \conv
S_{\redex}(\beta_r)^{\conv m_r}\\
&&\hs{20ex}{\To}\overset{\Lgets}{\nabla}_{\redex}(\um) \seteq
S_{\redex}(\beta_r)^{\conv m_r}\conv\cdots \conv
S_{\redex}(\beta_1)^{\conv m_1}.
\eneqn
Such a morphism is unique up to a constant multiple, and
${\rm Im}(\rmat{\um}) $ is a simple module and
coincides with the head of $\overset{\Lto}{\nabla}_{\redex}(\um)$
and the socle of $\overset{\Lgets}{\nabla}_{\redex}(\um)$.
\item For every $\um=(m_1,\ldots,m_r) \in \Z_{\ge 0}^r$, we have
$$[\overset{\Lto}{\nabla}_{\redex}(\um)] \in [{\rm Im}(\rmat{\um})] + \sum_{\um' \prec_\Z \um} \Z_{\ge 0} [{\rm Im}(\rmat{\um'})].$$
\item For any simple $R(\beta)$-module $M$, there exists
a unique $\um=(m_1,\ldots,m_r) \in \Z_{\ge 0}^r$ such that
$$M \simeq {\rm Im}(\rmat{\um}) \simeq {\rm hd}\big( \overset{\Lto}{\nabla}_{\redex}(\um) \big).$$
\item For any minimal pair $(\beta_k,\beta_l)$ of $\beta_j=\beta_k+\beta_l$,
there is an exact sequence
$$ \qquad 0 \to S_{\redex}(\beta_j)
\to S_{\redex}(\beta_k) \conv S_{\redex}(\beta_l) \overset{{\mathbf r}}{\Lto} S_{\redex}(\beta_l) \conv S_{\redex}(\beta_k)
\to S_{\redex}(\beta_j) \to 0$$
with a non-zero $\mathbf{r}$.
\end{enumerate}
\end{theorem}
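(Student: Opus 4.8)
The plan is to prove this following McNamara's theory of cuspidal modules for quiver Hecke algebras of finite type \cite{Mc12} (an alternative, geometric route being Kato \cite{Kato}). The reduced expression $\redex$ produces a convex total order $\le_{\redex}$ on $\Delta^+_{\g_0}$ with enumeration $\beta_1\le_{\redex}\cdots\le_{\redex}\beta_r$, and we single out, for each positive root $\beta$, the \emph{cuspidal} simple modules in $\Rep(R(\beta))$: a simple $M$ is cuspidal if $e(\gamma,\beta-\gamma)M\neq0$ forces $\gamma$ to be a nonnegative integer combination of positive roots each $\le_{\redex}\beta$, and $e(\beta-\gamma,\gamma)M\neq0$ forces $\gamma$ to be a nonnegative combination of positive roots each $\ge_{\redex}\beta$. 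The first and main task is to show that for every $\beta\in\Delta^+_{\g_0}$ there is a \emph{unique} cuspidal module $S_{\redex}(\beta)$, that $S_{\redex}(\al_i)=L(i)$, and that each $S_{\redex}(\beta)$ is \emph{real}, i.e. $S_{\redex}(\beta)\conv S_{\redex}(\beta)$ is simple. Granting this, one defines $\overset{\Lto}{\nabla}_{\redex}(\um)$ and $\overset{\Lgets}{\nabla}_{\redex}(\um)$ as in the statement and deduces (i)--(iv).

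To construct $S_{\redex}(\beta)$ I would induct on $\Ht(\beta)$, starting with $S_{\redex}(\al_i)=L(i)$. For a non-simple $\beta=\beta_j$, choose a minimal pair $(\beta_a,\beta_b)$ of $\beta$ (so $\beta_a+\beta_b=\beta$, $\beta_a\le_{\redex}\beta\le_{\redex}\beta_b$, both of smaller height) and let $S_{\redex}(\beta)$ be the head of $S_{\redex}(\beta_a)\conv S_{\redex}(\beta_b)$; the work is to check that this head is simple and cuspidal, is real, and is independent of the chosen minimal pair. All these checks are consequences of a Mackey/shuffle analysis of restrictions of convolution products of cuspidal modules: for positive roots $\gamma_1\ge_{\redex}\cdots\ge_{\redex}\gamma_m$ with $\sum_a\gamma_a=\beta$, the layers of the restriction of $S_{\redex}(\gamma_1)\conv\cdots\conv S_{\redex}(\gamma_m)$ to the various $R(\delta_1)\tens\cdots\tens R(\delta_l)$ are governed by shuffles of the $\gamma_a$, where one repeatedly uses the elementary property of a convex order that no positive root is simultaneously a nonnegative combination of positive roots all strictly $>_{\redex}\beta$ and one of positive roots all strictly $<_{\redex}\beta$. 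This restriction analysis is the technical heart of the proof and the step I expect to be the main obstacle.

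Next, with the cuspidal modules available, I would construct $\rmat{\um}$ as a composite of $R$-matrices between the real simple modules $S_{\redex}(\beta_k)$, reordering the tensor factors of $\overset{\Lto}{\nabla}_{\redex}(\um)$ into those of $\overset{\Lgets}{\nabla}_{\redex}(\um)$ one transposition at a time; such $R$-matrices are available because $R$ is a symmetric quiver Hecke algebra, by the machinery of \cite{KKKO14A,KKK13A}. The uniqueness up to scalar, the identification ${\rm Im}(\rmat{\um})={\rm hd}\big(\overset{\Lto}{\nabla}_{\redex}(\um)\big)=\Soc\big(\overset{\Lgets}{\nabla}_{\redex}(\um)\big)$, and the simplicity of this image then follow from the general lemmas on $R$-matrices and heads/socles of convolution products in \cite{KKKO14A}, once one knows that the $S_{\redex}(\beta_k)$ are real and commute up to $\prec_\Z$-lower terms -- which is exactly the restriction computation of the previous step. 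This gives (i). For (ii), I would compute $[\overset{\Lto}{\nabla}_{\redex}(\um)]$ in $\K(\Rep(R))$ by restricting to $R(m_1\beta_1)\tens\cdots\tens R(m_r\beta_r)$: the class ${\rm Im}(\rmat{\um})$ appears with multiplicity one, and every other composition factor is some ${\rm Im}(\rmat{\um'})$ with $\um'\prec_\Z\um$, the order comparison coming from the fact that $\le_{\redex}$ refines the Auslander--Reiten order $\preceq_Q$ (\cite{B99,R96}).

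Finally, for (iii) the modules $L(\um)\seteq{\rm Im}(\rmat{\um})$ are pairwise non-isomorphic (they have distinct maximal restrictions), and every simple $M\in\Rep(R(\beta))$ is of this form: take the $\le_{\redex}$-largest positive root $\gamma$ with $e(m\gamma,\beta-m\gamma)M\neq0$ for some $m\ge1$, peel $S_{\redex}(\gamma)^{\conv m}$ off the top of $M$, and induct on $\Ht(\beta)$ (one can also cross-check that the number of $\um$ with $\sum_k m_k\beta_k=\beta$ equals the number of simple modules in $\Rep(R(\beta))$, via the categorification of $U_q^-(\g_0)$ in \cite{KL09,R08}). For (iv), apply (i) and (ii) with $\um=\mathbf{e}_a+\mathbf{e}_b$ for a minimal pair $(\beta_a,\beta_b)$ of $\beta_j$: minimality of the pair together with the unitriangularity in (ii) forces the only composition factor of $S_{\redex}(\beta_a)\conv S_{\redex}(\beta_b)$ strictly below its head ${\rm Im}(\mathbf r)$ to be $S_{\redex}(\beta_j)=L(\mathbf{e}_j)$, and a restriction/dimension count shows it appears exactly once; combining this with the socle description applied to $S_{\redex}(\beta_b)\conv S_{\redex}(\beta_a)$ yields the asserted four-term exact sequence. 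Apart from the cuspidal construction and its restriction lemmas, everything else is bookkeeping with convex orders and Grothendieck groups.
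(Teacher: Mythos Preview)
The paper does not give its own proof of this theorem; it is stated with attribution to \cite{Mc12} (and \cite{Kato}) and used as a black box throughout. Your outline is essentially a sketch of McNamara's argument in \cite{Mc12}, which is the intended source, so in that sense your approach matches the paper's (implicit) one.

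One small correction: in your discussion of (ii) you invoke the fact that $\le_{\redex}$ refines the Auslander--Reiten order $\preceq_Q$. This is irrelevant here and slightly misleading. Theorem~\ref{thm: PBW} is stated for an \emph{arbitrary} reduced expression $\redex$ of $w_0$, not only those adapted to a Dynkin quiver $Q$, and McNamara's unitriangularity argument uses nothing about Auslander--Reiten quivers. The comparison $\um'\prec_\Z\um$ comes directly from the Mackey/restriction analysis of $\overset{\Lto}{\nabla}_{\redex}(\um)$ with respect to the convex order $\le_{\redex}$: any other composition factor $L(\um')$ must have its leftmost (with respect to $\le_{\redex}$) cuspidal piece at most that of $\um$, and strictly less on at least one side, which is exactly the condition $\um'\prec_\Z\um$ in \eqref{eq: prec Z}. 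The AR-order only enters later in the paper (Remark~\ref{rem: SQ(beta)}(b) and \S2.3) to justify that $S_{\redex}(\beta)$ depends only on $Q$ when $\redex$ is adapted to $Q$.
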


\begin{remark} \label{rem: SQ(beta)}
In \cite{Mc12} (see also  \cite{Kato}), it is shown that  the
simple module $S_{\redex}(\beta)$ corresponds to the {\em dual PBW
generator} of $U^{-}_q(\g_0)$ of weight $-\beta$, under the
isomorphism between the  Grothendieck ring $\K(R\gmod)$ and the dual
integral form of $U^{-}_q(\g_0)$. Here $R\gmod$ is the category of
finite-dimensional {\it graded} $R$-modules.
 As a result, we have the following properties:
\begin{enumerate}
\item[{\rm (a)}] For every simple root $\alpha_i$, we have $S_{\redex}(\alpha_i) \simeq L(i)$.
\item[{\rm (b)}]  For any pair of reduced expressions $\redex$ and
$\widetilde{w_0}'$ of $w_0$ which are adapted to $Q$, we have
$$S_{\redex}(\beta) \simeq S_{\widetilde{w_0}'}(\beta) \quad \text{ for all } \beta \in \Delta^+_{\g_0}.$$
Thus we will denote by $S_Q(\beta)$ the simple $R(\beta)$-module $S_{\redex}(\beta)$ for a reduced expression $\redex$ adapted to $Q$.
\end{enumerate}
\end{remark}

From now on, we assume that $\g\uo=A^{(1)}_{N}$ or $D^{(1)}_{N}$, and take  a Dynkin quiver $Q$ of type $\g_0\uo$. We also denote by $R(\beta)$
the quiver Hecke algebra $R^J(\beta)$ associated with the triple
$(J,X\uo,s\uo)$ in Theorem \ref{thm: simply laced  Se^J }.
Hence the quiver Hecke algebra is of type $A_N$ or $D_N$.
Then we have an exact functor
$$ \Fun_Q^{(1)}: \Rep(R) \to \Ca_{\g\uo}.$$

By Theorem \ref{thm: QASWD functor} (a), one can observe that
$$\Fun_Q^{(1)}(L(j)) \simeq V_Q(\alpha_j) \quad \text{for any $j \in I_0$}.$$

\begin{theorem} [\cite{KKK13B}] \label{thm: QASWD functor Q} \hfill
\bnum
\item For every $\beta \in \Delta^+_{\g_0}$, we have
$$\Fun_Q^{(1)}(S_Q(\beta)) \simeq V_Q(\beta).$$
\item The functor $\Fun_Q^{(1)}$ sends  a simple object to a simple object and it induces a ring isomorphism
$$ \varphi^{(1)} \colon \K(\Rep(R))\isoto \K(\Ca^{(1)}_Q).$$
\end{enumerate}
\end{theorem}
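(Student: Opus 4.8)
The plan is to prove (i) first, by induction on $\Ht(\beta)$, and then deduce (ii) from (i) together with the PBW-type description of simple $R$-modules in Theorem~\ref{thm: PBW}. Fix a reduced expression $\redex=s_{i_1}\cdots s_{i_r}$ of $w_0$ adapted to $Q$, so that $S_Q(\beta)=S_{\redex}(\beta)$ (Remark~\ref{rem: SQ(beta)}). The case $\Ht(\beta)=1$ of (i) is the identity $\Fun_Q^{(1)}(S_Q(\al_i))=\Fun_Q^{(1)}(L(i))\simeq V_Q(\al_i)$ recorded just before the statement, using $S_Q(\al_i)\simeq L(i)$. For the inductive step, write $\beta=\beta_j$ with $\Ht(\beta_j)\ge 2$ and pick a minimal pair $(\beta_k,\beta_l)$ of $\beta_j$ for $\preceq_Q$; then $\Ht(\beta_k),\Ht(\beta_l)<\Ht(\beta_j)$. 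Applying the exact monoidal functor $\Fun_Q^{(1)}$ (Theorem~\ref{thm: QASWD functor}(b),(c); exactness holds since $\Se^J=Q^{\rm rev}$ is of type $A$ or $D$) to the four-term exact sequence of Theorem~\ref{thm: PBW}(iv), and using the induction hypothesis for $\beta_k,\beta_l$ together with $\Fun_Q^{(1)}(M\conv N)\simeq\Fun_Q^{(1)}(M)\tens\Fun_Q^{(1)}(N)$, yields an exact sequence in $\Ca_{\g\uo}$
\[
0\to\Fun_Q^{(1)}(S_Q(\beta_j))\to V_Q(\beta_k)\tens V_Q(\beta_l)\To[f]V_Q(\beta_l)\tens V_Q(\beta_k)\to\Fun_Q^{(1)}(S_Q(\beta_j))\to 0,
\]
where $f=\Fun_Q^{(1)}(\mathbf r)$.

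The second step is to identify $\ker f$ with $V_Q(\beta_j)$. For this I would use that $\Fun_Q^{(1)}$ is compatible with $R$-matrices, i.e.\ that $f$ is a nonzero scalar multiple of the renormalized $R$-matrix $V_Q(\beta_k)\tens V_Q(\beta_l)\to V_Q(\beta_l)\tens V_Q(\beta_k)$; consequently $\ker f$ is the radical of $V_Q(\beta_k)\tens V_Q(\beta_l)$ and $\Fun_Q^{(1)}(S_Q(\beta_j))$ is its socle. Next, the denominator formulas of Theorem~\ref{thm: denominator of untwisted}, combined with the placement of $X\uo$ coming from Theorem~\ref{thm: simply laced Se^J} (so that $\Se^J=Q^{\rm rev}$), show that $d_{V_Q(\beta_k),V_Q(\beta_l)}(z)$ has a zero of order exactly one at the ratio $X\uo(\phi^{-1}(\beta_l,0))/X\uo(\phi^{-1}(\beta_k,0))$; hence $V_Q(\beta_k)\tens V_Q(\beta_l)$ has precisely two composition factors. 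Its head has dominant extremal weight $\varpi_{i_k}+\varpi_{i_l}$, where $\phi^{-1}(\beta_k,0)=(i_k,p_k)$ and $\phi^{-1}(\beta_l,0)=(i_l,p_l)$, while its socle is the fundamental module $V(\varpi_{i_j})_{(-q)^{p_j}}=V_Q(\beta_j)$ with $\phi^{-1}(\beta_j,0)=(i_j,p_j)$ — this last point is the mesh/$T$-system relation attached to the minimal pair in $\Gamma_Q$, which I would verify by comparing $q$-characters (or dominant extremal weights) via Theorem~\ref{thm: denominator of untwisted}. This gives $\ker f=\Fun_Q^{(1)}(S_Q(\beta_j))\simeq V_Q(\beta_j)$ and completes the proof of (i).

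For (ii), first observe that (i) and the exactness and monoidality of $\Fun_Q^{(1)}$ force $\Fun_Q^{(1)}$ to map $\Rep(R)$ into $\Ca_Q^{(1)}$: by Theorem~\ref{thm: PBW}(iii) every simple $R$-module is a subquotient of some $\overset{\Lto}{\nabla}_{\redex}(\um)$, and $\Fun_Q^{(1)}(\overset{\Lto}{\nabla}_{\redex}(\um))\simeq V_Q(\beta_1)^{\tens m_1}\tens\cdots\tens V_Q(\beta_r)^{\tens m_r}\in\Ca_Q^{(1)}$, while every object of $\Rep(R)$ is built from simples by extensions. Thus $\Fun_Q^{(1)}$ induces a ring homomorphism $\varphi^{(1)}\col\K(\Rep R)\to\K(\Ca_Q^{(1)})$ with $\varphi^{(1)}([\overset{\Lto}{\nabla}_{\redex}(\um)])=[V_Q(\um)]$, where $V_Q(\um)\seteq V_Q(\beta_1)^{\tens m_1}\tens\cdots\tens V_Q(\beta_r)^{\tens m_r}$. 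By Theorem~\ref{thm: PBW}(ii)--(iii) the classes $\{[\overset{\Lto}{\nabla}_{\redex}(\um)]\}_{\um}$ form a $\Z$-basis of $\K(\Rep R)$, and by Hernandez--Leclerc \cite{HL11} the classes $\{[V_Q(\um)]\}_{\um}$ form a $\Z$-basis of $\K(\Ca_Q^{(1)})$ — the standard modules being unitriangular (for an order compatible with $\prec_\Z$) against the classes of simple objects. Hence $\varphi^{(1)}$ is a ring isomorphism. Finally, for a simple $R$-module $M\simeq{\rm Im}(\rmat{\um})=\Top(\overset{\Lto}{\nabla}_{\redex}(\um))$: applying $\varphi^{(1)}$ to the identity $[\overset{\Lto}{\nabla}_{\redex}(\um)]=[{\rm Im}(\rmat{\um})]+\sum_{\um'\prec_\Z\um}c_{\um'}[{\rm Im}(\rmat{\um'})]$ of Theorem~\ref{thm: PBW}(ii), together with induction on $\prec_\Z$, shows that $[\Fun_Q^{(1)}(M)]$ equals the class of the simple object of $\Ca_Q^{(1)}$ indexed by $\um$; since $\Fun_Q^{(1)}(M)$ is an actual module (indeed a quotient of $V_Q(\um)$, as $\Fun_Q^{(1)}$ is right exact) with this class, it is that simple object. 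Therefore $\Fun_Q^{(1)}$ carries simple objects to simple objects, and — using Theorem~\ref{thm: PBW}(iii) and the Hernandez--Leclerc classification of simple objects of $\Ca_Q^{(1)}$ — induces a bijection on simple objects; the asserted ring isomorphism is $\varphi^{(1)}$.

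The crux, and main obstacle, is the $R$-matrix compatibility of $\Fun_Q^{(1)}$ used in step two of (i): one must show that $\Fun_Q^{(1)}$ sends the renormalized KLR intertwiner $\mathbf r$ to a nonzero multiple of the renormalized $R$-matrix between the corresponding tensor products of fundamental $U_q'(\g\uo)$-modules. This is what guarantees $f$ is neither zero nor invertible and that $\ker f$ is the radical of $V_Q(\beta_k)\tens V_Q(\beta_l)$; combined with the denominator computations of Theorem~\ref{thm: denominator of untwisted} it pins down that tensor product as a length-two module matching the mesh relation of $\Gamma_Q$. A secondary technical point is the identification of its socle with the precise fundamental module $V_Q(\beta_j)$, and the matching of the Hernandez--Leclerc standard modules with the PBW standard modules $\overset{\Lto}{\nabla}_{\redex}(\um)$ needed for $\varphi^{(1)}$ to be a map of PBW-type bases.
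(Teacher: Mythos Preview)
The paper does not prove Theorem~\ref{thm: QASWD functor Q} directly---it is cited from \cite{KKK13B}---but it does prove the twisted analogues, Theorem~\ref{thm: FQ2 SQ} and Theorem~\ref{thm: simples to simples}, and explicitly remarks that the latter proof works verbatim for $\Fun_Q^{(1)}$. Your overall architecture (induction on height for (i), then PBW-type argument for (ii)) matches theirs, but the execution diverges at exactly the point you flag as the main obstacle, and the paper's route is considerably simpler.

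For (i), you try to show that $f=\Fun_Q^{(1)}(\mathbf r)$ is a nonzero multiple of the renormalized $R$-matrix, then use denominator orders to argue the tensor product has length two, then identify the socle via mesh relations. None of this is needed. The paper (in the proof of Theorem~\ref{thm: FQ2 SQ}) argues instead: if $\Fun_Q^{(1)}(S_Q(\beta_j))$ were zero, the exact sequence would give $V_Q(\beta_k)\tens V_Q(\beta_l)\simeq V_Q(\beta_l)\tens V_Q(\beta_k)$, which by \cite[Corollary 3.16]{KKKO14} forces this tensor product to be simple---contradicting Proposition~\ref{prop: minimal liked}, which gives a nontrivial surjection $V_Q(\beta_l)\tens V_Q(\beta_k)\epito V_Q(\beta_j)$. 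Hence $f$ is nonzero, and then \cite{KKKO14} (together with the quantum-affine version of \cite[Proposition~4.5]{KKKO15B}) says the image of \emph{any} nonzero map between these two tensor products is the simple head of the source. Since Proposition~\ref{prop: minimal liked} already names $V_Q(\beta_j)$ as a simple quotient of $V_Q(\beta_l)\tens V_Q(\beta_k)$, you are done. So your ``crux'' (compatibility with $R$-matrices) is bypassed entirely, and the claim ``simple zero of the denominator $\Rightarrow$ length two''---which is not obvious and which you do not justify---is never invoked.

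For (ii), your approach (match the PBW basis to the Hernandez--Leclerc standard basis of $\K(\Ca_Q^{(1)})$ via \cite{HL11}, then do a unitriangularity argument) is plausible but imports external structure. The paper's argument (Theorem~\ref{thm: simples to simples}) is more self-contained: by Lemma~\ref{lem: k<l no poles} and \cite[Theorem~2.2.1(ii)]{KKK13A}, the ordered tensor product $\overset{\Lto}{W}_Q(\um)$ has a simple head equal to the image of any nonzero map to $\overset{\Lgets}{W}_Q(\um)$, so it suffices to show $\Fun_Q^{(1)}(\rmat{\um})\ne 0$. This is done by induction on $\prec_\Z$: every composition factor of $\Ker\Fun_Q^{(1)}(\rmat{\um})$ is of the form ${\rm hd}(\overset{\Lto}{W}_Q(\um'))$ with $\um'\prec_\Z\um$, and by \cite[Theorem~2.2.1(iii)]{KKK13A} none of these is isomorphic to ${\rm hd}(\overset{\Lto}{W}_Q(\um))$. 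The ring isomorphism then follows immediately.
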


\section{Similarities between untwisted cases and twisted cases} \label{sec: similar}

In this section, we first recall the denominators of normalized $R$-matrices of type
$A_{N}^{(2)}$ and $D_{N}^{(2)}$. Next, we shall observe similarities
between the representations of quantum affine algebras of untwisted types and
twisted types, i.e.,
the similarity between $A^{(1)}_N$ and $A^{(2)}_N$ ($N\ge2$)
and the similarity between $D^{(1)}_N$ and $D^{(2)}_N$ ($N\ge4$).

\begin{remark}\label{rem:figure}
In this paper, we follow  the enumeration  of vertices of Dynkin diagram of type $A^{(t)}_N$ ($t=1,2$, $N\ge2$)  and $D^{(t)}_N$ ($t=1,2$, $N\ge4$) as follows.
\begin{itemize}
\item $A^{(1)}_{N}$ ($N\ge 2$)\hs{5ex}
$
\xymatrix@R=2ex@C=4ex{ &&& *{\circ}<3pt> \ar@{-}[drrr]^<{0}\ar@{-}[dlll]\\
*{\circ}<3pt> \ar@{-}[r]_<{1}  &*{\circ}<3pt> \ar@{-}[r]_<{2}  & *{}<0pt>
\ar@{.}[r]&*{}<0pt>\ar@{.}[r]&*{}<0pt>\ar@{-}[r]_>{\,\,\,\,N-1}&*{\circ}<3pt>
\ar@{-}[r]_>{\,\,\,\,N}
&*{\circ}<3pt>}
$
\item $A_{2n}^{(2)}$ ($n\ge2$) \hs{5ex}
$ \xymatrix@R=3ex{ *{\circ}<3pt> \ar@{<=}[r]_<{n}  &*{\circ}<3pt>
\ar@{-}[r]_<{n-1}   & {} \ar@{.}[r] & *{\circ}<3pt>
\ar@{-}[r]_>{\,\,\,\ 1} &*{\circ}<3pt>\ar@{<=}[r]_>{\,\,\,\,0}
&*{\circ}<3pt> } $ \quad $q=q_i$ for $1\le i<n$.

\vs{1ex}
\item $A_{2}^{(2)}$ \hs{14ex}
$ \put(0,0){\circle{4}} \put(40,0){\circle{4}}
\put(12.1,-3.6){\line(1,0){22}} \put(8.1,-1.15){\line(1,0){26}}
\put(8,1.15){\line(1,0){26}} \put(12.1,3.6){\line(5,0){22}}
\put(1.5,-5.2){{\LARGE\mbox{$<$}}} \put(-3,-10){\tiny $1$}
\put(37,-10){\tiny $0$}
$\hs{15ex} \raisebox{-1ex}{$q_0=q^{2}$, $q_1=q^{1/2}$.}

\vs{1ex}
\item $A_{2n-1}^{(2)}$ ($n\ge 3$) \qquad
$
\xymatrix@R=3ex{ & *{\circ}<3pt> \ar@{-}[d]^<{0}\\
*{\circ}<3pt> \ar@{-}[r]_<{1}  &*{\circ}<3pt> \ar@{-}[r]_<{2}  & {}
\ar@{.}[r] & *{\circ}<3pt> \ar@{-}[r]_>{\,\,\,\ n-1}
&*{\circ}<3pt>\ar@{<=}[r]_>{\,\,\,\,n} &*{\circ}<3pt> }
$\quad \raisebox{-2ex}{$q=q_i$ for $i\not= n$.}

\item $A^{(2)}_3$\hs{15ex}
$\xymatrix@R=3ex{*{\circ}<3pt>  \ar@{<=}[r]_<{0} & *{\circ}<3pt> \ar@{=>}[r]_<{2} \ar@{}[r]_>{\,\,\,\ 1}  & *{\circ}<3pt>}$\quad $q=q_0=q_1$, $q_2=q^2$.

\item $D_{N}^{(1)}$ ($N\ge 4$) $ \qquad$
\xymatrix@R=1.2ex{
*{\circ}<3pt> \ar@{-}[dr]^<{0}
&&&&&&&*{\circ}<3pt>
\ar@{-}[dl]_<{N-1}\\
&*{\circ}<3pt> \ar@{-}[r]_<{2} & *{\circ}<3pt> \ar@{-}[r]_<{3}
& {} \ar@{.}[r]&{} \ar@{-}[r]_>{\,\,\,\,N-3}
& *{\circ}<3pt> \ar@{-}[r]_>{N-2\,\,\,} &*{\circ}<3pt>\\
*{\circ}<3pt> \ar@{-}[ur]_<{1}&&&&&&&
*{\circ}<3pt> \ar@{-}[ul]^<{N}
}

\item $D_{N}^{(2)}$ ($N\ge4$) \hs{3ex}
$
\xymatrix@R=2ex{ *{\circ}<3pt> \ar@{<=}[r]_<{0}  &*{\circ}<3pt>
\ar@{-}[r]_<{1}   & {} \ar@{.}[r] & *{\circ}<3pt>
\ar@{-}[r]_>{\,\,\,\ N-2} &*{\circ}<3pt>\ar@{=>}[r]_>{\,\,\,\,N-1}
&*{\circ}<3pt> }
$\quad $q=q_0=q_{N-1}$.
\end{itemize}

In particular, the affine Kac-Moody algebra of type $A^{(2)}_{3}$
should be understood as $D^{(2)}_3$.

In these cases, $i^*$ and $p^*$ in \eqref{equation: psta} are given as follows:
\begin{align}i^*&=\bc
N+1-i&\text{if $\g=A^{(1)}_N$,}\\
N-(1-\eps)&\text{if $\g=D_{N}\uo$, $N$ is odd and $i=N-\eps$, $\eps=0,1$,}\\
i&\text{otherwise,}\label{eq:dual1}
\ec\\
\intertext{and}
p^*&=\bc
(-q)^{N+1}&\text{if $\g=A^{(1)}_N$,}\\
(-q)^{2N-2}&\text{if $\g=D_{N}\uo$,}\\
-q^{N+1}&\text{if $\g=A^{(2)}_N$,}\\
(-1)^Nq^{2(N-1)}&\text{if $\g=D_{N}\ut$.}
\ec\label{eq:dual2}
\end{align}
\end{remark}

In \cite{Oh14}, the denominators of normalized $R$-matrices between fundamental modules of type
$A_{N}^{(2)}$ and $D_{N}^{(2)}$ are computed as in the following theorem.
Note that we have $d_{k,l}(z)=d_{l,k}(z)$.

\begin{theorem} \label{thm: denominator of twisted} \hfill
\begin{enumerate}
\item[{\rm (a)}]  For $\g = A^{(2)}_{N}$ $(N \ge 2)$
and $1 \le k , l \le \big\lfloor \dfrac{N+1}{2} \big\rfloor$, we have
\begin{align}
d_{k,l}^{A^{(2)}_{N}}(z) = \prod_{s=1}^{\min(k,l)} (z-(-q)^{|k-l|+2s})(z+q^{N+1}(-q)^{-k-l+2s}).
\end{align}
\item[{\rm (b)}]  For $\g = D^{(2)}_{N}$ $(N \ge 4)$ and $1 \le k , l \le N-1$, we have
\begin{equation} \label{eq: denominator 1}
\begin{aligned}
& d_{k,l}^{\;D^{(2)}_{N}}(z)= \begin{cases}
\displaystyle \prod_{s=1}^{\min(k,l)}\hs{-1ex} \big(z^2 - (-q^2)^{|k-l|+2s}\big)\big(z^2 - (-q^2)^{2N-2-k-l+2s}\big) & \text{ if } 1 \le k,l \le N-2, \allowdisplaybreaks \\
\ \displaystyle \prod_{s=1}^{k}\big(z^2+(-q^{2})^{N-1-k+2s}\big) &  \hspace{-14ex}\text{ if }  1 \le k < N-1 \text{ and } l =N-1, \allowdisplaybreaks \\
\ \displaystyle  \prod_{s=1}^{N-1} \big(z+(-q^2)^{s}\big) &  \text{ if }  k=l=N-1. \allowdisplaybreaks
 \end{cases}
\end{aligned}
\end{equation}
\end{enumerate}
\end{theorem}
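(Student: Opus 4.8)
The plan is to follow the strategy of \cite{Oh14}, in which these denominators are computed; I sketch it here. As with every such computation, the argument has two halves: an \emph{upper bound}, that $d_{k,l}(z)$ divides the displayed product, and a \emph{lower bound}, that every factor of the displayed product divides $d_{k,l}(z)$. For the upper bound one uses the a priori estimate on the poles of $\Rnorm_{V(\varpi_k),V(\varpi_l)}(z_1,z_2)$ furnished by the universal $R$-matrix: the possible locations and orders of these poles are constrained by the pairing of $\g_0$-weights of $V(\varpi_k)$ and $V(\varpi_l)$ and by the decomposition of $V(\varpi_k)\otimes V(\varpi_l)$ over the classical part. For the lower bound one must exhibit, at each claimed zero $z=a$, an actual obstruction to the invertibility of $\Rnorm$ there, equivalently a proper nonzero submodule or quotient of $V(\varpi_k)_{x}\otimes V(\varpi_l)_{xa}$.

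The device driving both halves is \emph{Dorey's rule} for the twisted types $A_N^{(2)}$ and $D_N^{(2)}$: for suitable triples $(a,b,c)$ of vertices of $I_0$ and suitable spectral parameters there are nonzero $U_q'(\g)$-homomorphisms $V(\varpi_a)_{x}\otimes V(\varpi_b)_{y}\to V(\varpi_c)_{z}$, obtained either by folding the corresponding homomorphisms for $A_N^{(1)}$ and $D_N^{(1)}$ or by an explicit construction inside the first fundamental (``vector'') representation. Granting these, one invokes the propagation principle: if $M\otimes N\twoheadrightarrow L$ is a nonzero surjection of good modules, then for any fundamental $W$ the denominator $d_{L,W}(z)$ divides a product of suitably shifted copies of $d_{M,W}(z)$ and $d_{N,W}(z)$, while a nonzero kernel of such a surjection forces a zero of $d_{M,N}$ at the corresponding point. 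Applying this to the surjections $V(\varpi_1)_{\bullet}\otimes V(\varpi_l)_{\bullet}\twoheadrightarrow V(\varpi_{l+1})_{\bullet}$ and to their left and right duals (using $V(\varpi_i)_x^{*}\simeq V(\varpi_{i^{*}})_{x(p^{*})^{-1}}$ with $i^{*}$, $p^{*}$ as in \eqref{eq:dual2}) reduces the computation of $d_{k,l}(z)$, for $k,l$ in the vector range, to the single base case $d_{1,1}(z)$.

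That base case is done by hand: the first fundamental module of $U_q'(A_N^{(2)})$ (respectively $U_q'(D_N^{(2)})$) is small and completely explicit, so $\Rnorm_{V(\varpi_1),V(\varpi_1)}(z_1,z_2)$ can be computed directly --- restricting to weight spaces where it collapses to a rank-one or $U_q(\mathfrak{sl}_2)$ computation --- yielding $d_{1,1}^{A_N^{(2)}}(z)=(z-(-q)^2)(z+q^{N+1})$ and $d_{1,1}^{D_N^{(2)}}(z)=(z^2-q^4)(z^2-q^{4N-4})$. The $q^2$ and $z^2$ in the $D_N^{(2)}$ formula, and the reflected factor $z+q^{N+1}$ in the $A_N^{(2)}$ formula, are precisely the shadow of the diagram automorphism used in the folding and of the value of $p^{*}$. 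The remaining vertices --- the short vertex $n$ for $A_N^{(2)}$ and the vertex $N-1$ for $D_N^{(2)}$ --- are spinorial and are handled separately, using the Dorey homomorphisms relating $V(\varpi_{N-1})$ to tensor products of $V(\varpi_1),\dots,V(\varpi_{N-2})$ to pin down $d_{k,N-1}(z)$ and $d_{N-1,N-1}(z)$, again by matching the two bounds.

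The hard part is forcing the upper and lower bounds to coincide. The estimate from the universal $R$-matrix is in general not sharp, so spurious poles must be ruled out, and this is genuinely more delicate in the twisted case: folding rescales the spectral parameter (the passage to ``$z^{2}$''), so a single pole of an untwisted $R$-matrix could a priori split, and excluding this requires precise knowledge of the $q$-characters --- equivalently the crystal bases --- of the twisted fundamental modules. Dually, for the lower bound one needs the exact multiplicity of each zero, namely the $\min(k,l)$ factors in each of the two families; identifying the head and socle of $V(\varpi_k)_x\otimes V(\varpi_l)_{xa}$ precisely enough to see this is done through the cyclicity and cocyclicity of ordered tensor products of good modules together with an induction on $\min(k,l)$, using also that the two families of zeros have complementary sources --- the $(-q)^{|k-l|+2s}$-type family coming from the finite-type-$A_N$ behavior of small tensor products, and the $p^{*}$-twisted family from the affinization wrapping around the Dynkin diagram.
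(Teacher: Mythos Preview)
The paper does not give its own proof of this theorem: it is stated as a citation of \cite{Oh14} (``In \cite{Oh14}, the denominators \ldots\ are computed as in the following theorem''), with no argument supplied. Your proposal explicitly follows the strategy of \cite{Oh14}, so in that sense you are aligned with the paper's treatment.

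As for the content of your sketch, it is a faithful high-level outline of how such denominator computations are carried out in \cite{AK,Oh14}: a divisibility bound coming from the universal $R$-matrix together with duality, a propagation step using surjections $V(\varpi_1)_\bullet\otimes V(\varpi_l)_\bullet\twoheadrightarrow V(\varpi_{l+1})_\bullet$ and the general fact that a surjection $M\otimes N\twoheadrightarrow L$ forces $d_{L,W}$ to divide a shifted product of $d_{M,W}$ and $d_{N,W}$, the explicit base case $d_{1,1}$, and a separate treatment of the short/spinorial node. Your base cases $d_{1,1}^{A_N^{(2)}}(z)=(z-(-q)^2)(z+q^{N+1})$ and $d_{1,1}^{D_N^{(2)}}(z)=(z^2-q^4)(z^2-q^{4N-4})$ match the formulas in the statement at $k=l=1$. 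The only caveat is that your sketch remains at the level of strategy: the actual work in \cite{Oh14} --- matching upper and lower bounds root by root, especially verifying the multiplicities $\min(k,l)$ and handling the node $N-1$ for $D_N^{(2)}$ --- is not reproduced here. Since the paper itself defers entirely to \cite{Oh14}, this is appropriate.
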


\subsection{The Quiver $\Se_0(\g)$} For each $\g$, we define a quiver $\Se(\g)$ as follows:
\begin{eqnarray} &&
\parbox{80ex}
{\begin{enumerate}
\item[{\rm (i)}] The set of vertices is the set of all equivalence classes $\widehat{I}_\g \seteq (I_0 \times \ko^\times)/ \sim$,
where the equivalence relation is given in the following way:
$$ (i,x)\sim(j,y) \quad \text{if and only if} \quad V(\varpi_i)_{x} \simeq V(\varpi_j)_{y}.$$
\item[{\rm (ii)}] We put $d$ many arrows from $(i,x)$ to $(j,y)$, where
$d$ is the order of the zero of $d_{i,j}(z)$ at $z=y/x$.
\end{enumerate}
} \label{eq: Quiver S}
\end{eqnarray}

By \cite[(1.7)]{AK}, for $x,y\in \ko^\times$ and $i\in I_0$, and $\g=A^{(t)}_N$
($N\ge 2$, $t=1,2$) or  $\g=D^{(t)}_N$ ($N\ge 4$, $t=1,2$) we have
\eq \label{eq: twi A D iso}
& V(\varpi_i)_x \simeq V(\varpi_i)_{y}\Longleftrightarrow
\begin{cases}
x^2=y^2& \text{if $\g=A^{(2)}_{2n-1}$ ($n\ge2$), $i=n$}\\
x^2=y^2& \text{if $\g=D^{(2)}_{N}$ ($N\ge3$), $1\le i\le N-2$,}\\
x=y&\text{otherwise.}
\end{cases}
\eneq

Let $\Se_0(\g)$ be a connected component of $\Se(\g)$. Then $\Se_0(\g)$ is uniquely determined up to a spectral parameter and hence $\Se_0(\g)$ is unique
up to a quiver isomorphism. For the rest of this paper,
we take $\Se_0(\g)$ as follows:
\begin{equation}
\begin{aligned}
&\Se_0(A^{(1)}_{n}) \seteq \{(i,(-q)^{p}) \in \{1,\ldots,n\} \times \ko^{\times} \ | \ p \equiv d(1,i) \modt \}, \allowdisplaybreaks  \\
&\Se_0(D^{(1)}_{n+1}) \seteq \{( i,(-q)^{p}) \in \{1,\ldots,n,n+1\} \times \ko^{\times} \ | \ p \equiv d(n,i) \modt \}, \allowdisplaybreaks  \\
&\Se_0(A^{(2)}_{2n}) \seteq \{(i,(-q)^{p}) \in \{1,\ldots,n\} \times \ko^{\times}
\ | \ p \in \Z \},  \allowdisplaybreaks  \\
&\Se_0(A^{(2)}_{2n-1}) \seteq \{( i,\pm(-q)^{p})\in \{1,\ldots,n\} \times \ko^{\times}  | \ 1 \le i \le n, \ p \equiv i+1 \modt \}, \allowdisplaybreaks  \\
&\Se_0(D^{(2)}_{n+1}) \seteq \{(i,(\sqrt{-1}) q^{p}),(i', q^{p'}), (n,\pm q^{r}) \in \{1,\ldots,n\} \times \ko^{\times} \ | \  1 \le i,i' \le n-1,  \allowdisplaybreaks  \\
&\hspace{12ex} \ p \equiv n-i \equiv 0 \modt, \ p' \equiv n-i' \equiv 1 \modt, \ r \equiv 0  \modt \}.
\end{aligned}
\end{equation}
Recall that $d(i,j)$ denotes the distance of $i$ and $j$ in the
Dynkin diagram of $\g_0$.

We define the maps
$$\pi_{A^{(1)}_{N}}\col\Se(A^{(1)}_{N}) \to \Se(A^{(2)}_{N})
\quad\text{and}\quad \pi_{D^{(1)}_{N}}\col\Se(D^{(1)}_{N}) \to \Se(D^{(2)}_{N})$$ by
\begin{equation} \label{eq: star A}
\pi_{A^{(1)}_{N}}(i, x)
\seteq
\begin{cases}(i,x)  & \text{ if } 1 \le i \le \big\lfloor \dfrac{N+1}{2} \big\rfloor, \\
( N+1-i,(-1)^{N}  x)   & \text{ if } \big\lfloor \dfrac{N+1}{2} \big\rfloor < i \le N \end{cases}
\end{equation}
and
\begin{equation} \label{eq: star D}
\pi_{D^{(1)}_{N}}(i, x)\seteq
\begin{cases}  \bl i,\,(\sqrt{-1})^{\delta(N \equiv i \modt)+1}\, x\br
\sim \bl i, (\sqrt{-1})^{ N-i}x\br& \text{ if } 1 \le i \le N-2, \\
\bl N-1,(-1)^i x\br   & \text{ if $i \in \{ N-1,N\}$,} \end{cases}
\end{equation}
for $i \in I_0$ and $ x  \in \ko^\times$.

\begin{proposition} \label{prop: quiver iso}\hfill
\bnum
\item The maps $\pi_{A^{(1)}_{N}}$ and $\pi_{D^{(1)}_{N}}$ are
$2:1$ quiver morphisms.
\item
They induce quiver isomorphisms
$$
\Se_0(A^{(1)}_{N}) \overset{\sim}{\Lto} \Se_0(A^{(2)}_{N}) \quad \text{and} \quad
\Se_0(D^{(1)}_{N}) \overset{\sim}{\Lto} \Se_0(D^{(2)}_{N}).
$$
\item  The maps $\pi_{A^{(1)}_{N}}$ and $\pi_{D^{(1)}_{N}}$
commutes with the duality $*$. More precisely, setting $\g^{(t)}=A^{(t)}_N$ or $D^{(t)}_N$,
and denoting by $V^{(t)}(i,x)$ the $\U[\g^{(t)}]$-module $V(\vpi_i)_x$,
$ \bl V\uo(i,x)\br^*\simeq V\uo(j,y)$
implies $ \bl V\ut(\pi_{\g\uo}(i,x))\br^*\simeq V\ut(\pi_{\g\uo}(j,y))$.
\ee
\end{proposition}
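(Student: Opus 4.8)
The plan is to prove all three parts by explicit computation, treating the $A$-case ($\g\uo=A\uo_N,\ \g\ut=A\ut_N$, $N\ge2$) and the $D$-case ($\g\uo=D\uo_N,\ \g\ut=D\ut_N$, $N\ge4$) separately. The only inputs I would use are the denominator formulas of Theorems~\ref{thm: denominator of untwisted} and \ref{thm: denominator of twisted}, the isomorphism criterion \eqref{eq: twi A D iso} (which determines the vertex set of $\Se$), the explicit descriptions of $\Se_0(\g\uo)$ and $\Se_0(\g\ut)$, and, for (iii), the formulas \eqref{equation: psta}, \eqref{eq:dual1} and \eqref{eq:dual2} for the left dual. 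For (i) I would first observe that on the untwisted side the equivalence relation is trivial: by \eqref{eq: twi A D iso} applied to $\g\uo$ one has $V(\varpi_i)_x\simeq V(\varpi_i)_y$ only if $x=y$, and $V(\varpi_i)$, $V(\varpi_j)$ have distinct dominant extremal weights when $i\ne j$; hence $\pi_{\g\uo}$ is automatically well defined on vertices and it suffices to compute its fibers. In the $A$-case, using $i^*=N+1-i$, the fiber of a target vertex $[(j,z)]$ with $1\le j\le\lfloor(N+1)/2\rfloor$ is $\{(j,z),(j^*,(-1)^Nz)\}$, two distinct vertices of $\Se(A\uo_N)$; the only delicate instance is $A\ut_{2n-1}$ with $j=n$, where the fiber $\{(n,z),(n,-z)\}$ still has two source vertices although they are identified in the target by $x^2=y^2$. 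The $D$-fibers are read off the same way, the two spin nodes $N-1,N$ of $\g\uo_0$ mapping together onto the node $N-1$ of $\g\ut_0$ and the $\sqrt{-1}$-factors in \eqref{eq: star D} being absorbed by $(i,u)\sim(i,-u)$, $1\le i\le N-2$. This gives the $2:1$ property on vertices.

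Next, to see that $\pi_{\g\uo}$ is a morphism of quivers I would compare orders of vanishing of the denominators. Restricted to $k,l\le\lfloor(N+1)/2\rfloor$ one has $\min(k,l,N+1-k,N+1-l)=\min(k,l)$, so $d^{A\uo_N}_{k,l}(z)$ is exactly the first factor $\prod_{s=1}^{\min(k,l)}(z-(-q)^{|k-l|+2s})$ of $d^{A\ut_N}_{k,l}(z)$, whereas the second factor $\prod_s(z+q^{N+1}(-q)^{-k-l+2s})$ accounts --- after a uniform sign twist by $(-1)^N$ --- for the arrows joining a ``first half'' vertex to a ``second half'' vertex: the key identity is that the $s$-th root of the second factor of $d^{A\ut_N}_{k,l^*}(z)$ equals $(-1)^N$ times the $s$-th root of $d^{A\uo_N}_{k,l}(z)$. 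For type $D$ I would match \eqref{eq: denominator D1} against \eqref{eq: denominator 1}, where the appearance of $z^2$ in place of $z$ is precisely the shadow of $(i,x)\sim(i,-x)$, and again a short sign count with the $\sqrt{-1}$'s finishes it. Part (ii) then follows: from the explicit descriptions, $\pi_{\g\uo}$ matches the columns of $\Se_0(\g\uo)$ with the (parity- or sign-labelled) vertices of $\Se_0(\g\ut)$ --- in type $A$ the first half $\{i\le\lfloor(N+1)/2\rfloor\}$ onto one part and the second half onto the complementary part, with the middle column $i=n$ of $A\ut_{2n-1}$ hit exactly once; in type $D$ the main chain split according to the parity of $n-i$ and the two spin nodes onto the two signs at node $n$ --- so a column count shows $\pi_{\g\uo}\colon\Se_0(\g\uo)\to\Se_0(\g\ut)$ is a vertex bijection; moreover, for any two vertices of $\Se_0(\g\uo)$ the parity of the exponent difference of their images is forced and selects exactly one of the two factors of the twisted denominator, namely the one already matched in the previous step to the untwisted denominator. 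Hence $\pi_{\g\uo}|_{\Se_0(\g\uo)}$ is a quiver isomorphism.

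For (iii), \eqref{equation: psta} rewrites the hypothesis $\bl V\uo(i,x)\br^*\simeq V\uo(j,y)$ as $j=i^*$, $y=x(p^*)^{-1}$ with $i^*,p^*$ taken from \eqref{eq:dual1}--\eqref{eq:dual2} for $\g\uo$, and the conclusion as the statement that $\bl V\ut(\pi_{\g\uo}(i,x))\br^*$ and $V\ut\bl\pi_{\g\uo}(i^*,x(p^*)^{-1})\br$ agree in $\Se(\g\ut)$. For type $A$ this is a short sign computation: on the twisted side $i^*=i$ and $p^*=-q^{N+1}$, so $*$ there only rescales the spectral parameter by $-q^{-(N+1)}$, and one checks this agrees with transporting the untwisted dual through $\pi_{A\uo_N}$ by splitting on whether $i\le\lfloor(N+1)/2\rfloor$ and on the parity of $N$ (using $x^2=y^2$ in the single case $A\ut_{2n-1}$, $i=n$). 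Type $D$ is the same verification with $p^*=(-1)^Nq^{2(N-1)}$ on the twisted side and the $\sqrt{-1}$-twists of \eqref{eq: star D}.

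The main obstacle is the bookkeeping in (i) and (ii): one must match the single family of roots of each untwisted denominator against the two families of the twisted denominator under the $2:1$ folding $\pi_{\g\uo}$, keeping the signs $(-1)^N$ (type $A$) and the powers of $\sqrt{-1}$ (type $D$) under control, and separately handle the columns $i=n$ (for $A\ut_{2n-1}$) and $i\in\{N-1,N\}$ (for $D\ut_N$) at which the vertex equivalence relation is nontrivial; the rest is routine.
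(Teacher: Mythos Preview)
Your proposal is correct and follows exactly the same approach as the paper: the paper's proof is a single sentence stating that the assertions follow from Theorem~\ref{thm: denominator of untwisted}, Theorem~\ref{thm: denominator of twisted}, \eqref{eq: twi A D iso}, \eqref{eq:dual1} and \eqref{eq:dual2}, which are precisely the ingredients you list. Your write-up simply spells out the bookkeeping (fiber computations, matching the two factors of the twisted denominators against the folded untwisted ones, sign tracking through \eqref{eq: star A}--\eqref{eq: star D}) that the paper leaves to the reader.
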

\begin{proof}
Our assertions follow from Theorem \ref{thm: denominator of
untwisted}, Theorem \ref{thm: denominator of twisted},
isomorphisms \eqref{eq: twi A D iso} and \eqref{eq:dual1}, \eqref{eq:dual2}.
\end{proof}

By Proposition \ref{prop: quiver iso}, we have the following corollary:
\begin{corollary} \label{cor: FQ2}
Let $\g^{(t)}$ be the affine Kac-Moody algebra of type $A^{(t)}_{N}$ $(N\ge2)$
or  $D^{(2)}_{N}$ $(N\ge4)$ and $\g^{(t)}_0$ its classical part $(t=1,2)$.
For any quiver $Q$ of type $\g\uo_0$
take the  index set
$$ J \seteq  \phi^{-1}(\Pi_{\g\uo_0} \times \{0\})$$
and define the two maps $s\ut\col  J \to I_0\ut $,
 and $X\ut\col J \to \ko^{\times}$ by
$$\bl s\ut(i,p),X\ut(i,p)\br =\pi_{\g\uo}(i,(-q)^p),$$
 where $I_0\ut $ denotes the index set of simple roots for
$\g_0^{(2)}$. Then, the quiver  $\Se^J$ coincides with the
quiver $Q^{\rm{rev}}$ obtained from $Q$ by reversing all the arrows.
\end{corollary}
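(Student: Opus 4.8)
The plan is to deduce the corollary from Theorem~\ref{thm: simply laced Se^J } by transporting the entire picture along the quiver isomorphism of Proposition~\ref{prop: quiver iso}(ii). First I would fix notation: write $\iota\uo\col J\to I_0\uo\times\cor^\times$ for the map $(i,p)\mapsto\bl s\uo(i,p),X\uo(i,p)\br=(i,(-q)^p)$ appearing in Theorem~\ref{thm: simply laced Se^J }, and $\iota\ut\col J\to I_0\ut\times\cor^\times$ for the map $(i,p)\mapsto\bl s\ut(i,p),X\ut(i,p)\br$ of the corollary, so that, after passing to the equivalence classes defining $\Se(\g^{(t)})$, one has $\iota\ut=\pi_{\g\uo}\circ\iota\uo$. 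By the definitions \eqref{eq: Quiver S} and \eqref{eq: def Se^J}, the number of arrows from $i$ to $j$ in the quiver $\Se^J$ attached to $\bl s\ut,X\ut\br$ is exactly the number of arrows from $\iota\ut(i)$ to $\iota\ut(j)$ in $\Se(\g\ut)$; likewise the quiver $\Se^J$ attached to $\bl s\uo,X\uo\br$ records the arrows between $\iota\uo(i)$ and $\iota\uo(j)$ in $\Se(\g\uo)$.

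Next I would check that $\iota\uo$ is injective with image inside the distinguished connected component $\Se_0(\g\uo)$: injectivity follows from \eqref{eq: twi A D iso}, since for $\g\uo=A_N\uo$ or $D_N\uo$ the relation $\sim$ reduces to equality of spectral parameters; and the image lies in $\Se_0(\g\uo)$ by the observation already recorded in the Introduction, which is a parity computation using the definition of $\widehat{Q}_0$ and the connectedness of $Q$. Hence the quiver $\Se^J$ attached to $\bl s\uo,X\uo\br$ is precisely the full subquiver of $\Se_0(\g\uo)$ on the vertex set $\iota\uo(J)$. Now Proposition~\ref{prop: quiver iso}(ii) says $\pi_{\g\uo}$ restricts to a quiver isomorphism $\Se_0(\g\uo)\isoto\Se_0(\g\ut)$; since $\iota\ut=\pi_{\g\uo}\circ\iota\uo$ and $\iota\uo(J)\subseteq\Se_0(\g\uo)$, it follows that $\iota\ut$ is again injective and that for all $i,j\in J$ the number of arrows from $\iota\ut(i)$ to $\iota\ut(j)$ in $\Se(\g\ut)$ equals the number of arrows from $\iota\uo(i)$ to $\iota\uo(j)$ in $\Se(\g\uo)$. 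Combining this with the first paragraph, the quiver $\Se^J$ of the corollary coincides, on the vertex set $J$, with the quiver $\Se^J$ of Theorem~\ref{thm: simply laced Se^J }, which is $Q^{\rev}$; since the identification $J\simeq Q_0$, $(i,p)\mapsto i$, is the same one used there, we conclude $\Se^J=Q^{\rev}$.

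I expect the only delicate point to be the bookkeeping in the first two paragraphs --- identifying the arrow multiplicities of $\Se^J$ with arrow multiplicities inside the ambient quiver $\Se(\g\ut)$ between the images of the vertices --- which genuinely requires knowing that $\iota\uo$, hence $\iota\ut$, is injective and lands in the distinguished component $\Se_0$. Everything else is a formal transport of structure along the isomorphism of Proposition~\ref{prop: quiver iso}; in particular no new manipulation of denominator polynomials is needed, since the content of Theorems~\ref{thm: denominator of untwisted} and \ref{thm: denominator of twisted} has already been absorbed into Proposition~\ref{prop: quiver iso}.
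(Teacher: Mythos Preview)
Your proposal is correct and follows the same line as the paper, which derives the corollary directly from Proposition~\ref{prop: quiver iso} (combined implicitly with Theorem~\ref{thm: simply laced Se^J }). You have simply made explicit the transport-of-structure argument that the paper leaves as a one-line remark; no further idea is needed.
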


By Corollary \ref{cor: FQ2}, we obtain an exact KLR-type
duality functor
\begin{equation} \label{eq: FQ2}
\Fun_Q^{(2)}\col \Rep(R) \to \Ca_{\g\ut} \quad \text{ for $\g\ut=A^{(2)}_{N}$
or $D^{(2)}_{N}$.}
\end{equation}
Here $R$ denotes the quiver Hecke algebras of type $\g_0\uo=A_N$ or $D_N$.

\medskip

\subsection{Similarities}
Now we shall observe interesting similarities between the representations of quantum affine algebras of untwisted types and
twisted types thorough the maps $\pi_{A^{(1)}_{N}}$ and $\pi_{D^{(1)}_{N}}$.

\medskip
In the sequel, $\g^{(t)}$ denotes $A^{(t)}_N$ ($N\ge2$) or $D^{(t)}_N$ ($N\ge4$)
for $t=1,2$. We denote by $I\utt$ and $I_0\utt$ the index set of simple roots
for $\g\utt$ and its classical part $\g_0\utt$, respectively.
We denote by $V^{(t)}(\varpi_i)$ the fundamental module for $\U[\g^{(t)}]$.
For $(i,x)\in I_0\utt\times\cor^\times$, we set
$V^{(t)}(i,x)\seteq V\utt(\vpi_i)_x$.

For any Dynkin quiver $Q$ of type $\g\uo_0$ and a height functions $\xi$, we set
$$ \text{
$V^{(1)}_Q(\beta) \seteq V^{(1)}(i,x)$ and
$V_Q\ut(\beta)\seteq V\ut\bl\pi_{\g\uo}(i,x)\br$}$$
where $\phi^{-1}(\beta,0)=(i,p)$ and $x=(-q)^p$.
Here $\phi$ is the map in \eqref{eq: phi}, and $\beta$ is a positive
root in $\Delta^+_{\g\uo}$.

\begin{proposition}[\cite{H01}] \label{prop: same dim}
Let $\g\uo=A^{(1)}_N$ or $D^{(1)}_N$.
For any $(i,x) \in \Se(\g\uo)$, we have $$\dim_{\ko}V^{(1)}(i,x) = \dim_{\ko}V^{(2)}(\pi_{\g}(i,x)).$$
\end{proposition}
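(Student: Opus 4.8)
The plan is to reduce the statement to a dimension count that can be read off from the explicit list of fundamental modules and their known dimensions. The key observation is that $\pi_{\g\uo}$ sends $(i,x)$ to $(i,x')$ or to $(N+1-i,x')$ (in type $A$) and to $(i,x')$ or $(N-1,x')$ (in type $D$), where the spectral parameter change does not affect dimension since $\dim_\ko V\utt(\vpi_j)_y = \dim_\ko V\utt(\vpi_j)$ for all $y\in\ko^\times$. So it suffices to compare $\dim_\ko V\uo(\vpi_i)$ with $\dim_\ko V\ut(\vpi_j)$ where $j = s\ut$-component of $\pi_{\g\uo}(i,\cdot)$, i.e.\ to verify
\begin{align*}
\dim_\ko V^{(1)}(\vpi_i) &= \dim_\ko V^{(2)}(\vpi_i) \quad (1\le i\le \lfloor (N+1)/2\rfloor),\\
\dim_\ko V^{(1)}(\vpi_i) &= \dim_\ko V^{(2)}(\vpi_{N+1-i}) \quad (\lfloor (N+1)/2\rfloor < i\le N)
\end{align*}
in type $A$, and similarly the three cases $1\le i\le N-2$, $i\in\{N-1,N\}$ for type $D$ (matching to $\vpi_i$ and $\vpi_{N-1}$ respectively).

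First I would recall that for $\g\uo = A_N\uo$ the fundamental module $V\uo(\vpi_i)$ is the $q$-analogue of the $i$-th exterior power of the vector representation, so $\dim_\ko V\uo(\vpi_i) = \binom{N+1}{i}$, which is manifestly symmetric under $i\mapsto N+1-i$; and for $\g\ut = A_N\ut$ one has, by the classification of fundamental modules over twisted quantum affine algebras (via Chari--Pressley or the crystal/character computations in the references), $\dim_\ko V\ut(\vpi_i) = \binom{N+1}{i}$ for $1\le i\le\lfloor(N+1)/2\rfloor$ as well. Comparing with the symmetry $\binom{N+1}{i}=\binom{N+1}{N+1-i}$ gives the type $A$ cases immediately. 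For $\g\uo = D_N\uo$, $V\uo(\vpi_i)$ for $1\le i\le N-2$ is the $q$-analogue of $\bigwedge^i$ of the $2N$-dimensional vector representation, of dimension $\binom{2N}{i}$, while $V\uo(\vpi_{N-1})$ and $V\uo(\vpi_N)$ are the two half-spin representations, each of dimension $2^{N-1}$; the twisted modules $V\ut(\vpi_i)$ ($1\le i\le N-2$) likewise have dimension $\binom{2N}{i}$ and $V\ut(\vpi_{N-1})$ has dimension $2^{N-1}$, and this matches the pairing dictated by $\pi_{D_N\uo}$ (both $\vpi_{N-1}$ and $\vpi_N$ of the untwisted side map to $\vpi_{N-1}$ of the twisted side, consistently since the two half-spins have equal dimension).

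The cleanest way to organize this, and the one I would actually write, is to invoke the denominator formulas directly rather than appealing to external dimension tables: the identification $\Se_0(\g\uo)\isoto\Se_0(\g\ut)$ of Proposition~\ref{prop: quiver iso} already encodes that $\pi_{\g\uo}$ is a bijection on isomorphism classes of fundamental modules compatible with the combinatorics, and Hernandez's result \cite{H01} provides a ring isomorphism $\K(\Ca_{\g\uo})\isoto\K(\Ca_{\g\ut})$ sending $V\uo(\vpi_i)_x$ to $V\ut(\pi_{\g\uo}(i,x))$ — so really the proposition is a restatement of that compatibility. Thus the argument is: cite \cite{H01} for the existence of the character-preserving (hence dimension-preserving) isomorphism, check that the map it induces on fundamental modules agrees with $\pi_{\g\uo}$ by comparing highest weights (or equivalently $q$-characters) on both sides, and conclude. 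The main obstacle is purely bookkeeping: making sure the normalizations of the spectral parameters in \cite{H01} match the conventions fixed in \eqref{eq: star A}, \eqref{eq: star D} and in the definition of $\Se_0$ above, so that "the induced map on fundamental modules is $\pi_{\g\uo}$" is literally true and not merely true up to a global shift; once that matching is pinned down, dimension invariance is immediate.
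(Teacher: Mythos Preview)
The paper does not give its own proof of this proposition: it is stated with the attribution \cite{H01} and no argument is supplied. So there is nothing to compare against beyond ``cite Hernandez,'' which is precisely what your final paragraph does. Your reduction to a comparison of $\dim V\uo(\vpi_i)$ versus $\dim V\ut(\vpi_j)$ (with $j$ the first coordinate of $\pi_{\g\uo}(i,x)$) is correct and is a more hands-on alternative; just be careful that the twisted fundamental modules are \emph{not} in general irreducible as modules over the finite-type subalgebra $U_q(\g_0\ut)$, so the formulas $\binom{N+1}{i}$, $\binom{2N}{i}$, $2^{N-1}$ for the twisted side require justification beyond ``$q$-analogue of $\wedge^i$'' --- they come out of the $q$-character or crystal description, which is again essentially what \cite{H01} provides.
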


\begin{proposition} [{\cite[Theorem 6.1, Theorem 7.1]{CP96}}] \label{prop: Dorey untwisted} \hfill
\begin{enumerate}
\item[{\rm (a)}] Let $\g\uo=A^{(1)}_{N}$ and $(i,x),(j,y),(k,z) \in I_0\uo \times \ko^\times$. Then
$$ \Hom_{U_q'(\g\uo)}\big( V\uo(i,x)\tens V\uo(j,y), V\uo(k,z) \big) \ne 0 $$
if and only if one of the following conditions holds:
\begin{eqnarray}&&
\hs{2ex}\left\{\hs{-1ex}
\ba{l}
\text{{\rm (A-i)}\  $i+j < N+1$, $k=i+j$, $x/z=(-q)^{-j}$, $y/z=(-q)^i$,}
\\[1.5ex]
\text{{\rm (A-ii)}\ $i+j > N+1$, $k=i+j-N-1$, $x/z=(-q)^{-N-1+j}$,
$y/z=(-q)^{N+1-i}$.}\ea
\right.\label{eq: Dorey A untwisted}
\end{eqnarray}
\item[{\rm (b)}]  Let $\g\uo=D^{(1)}_{N}$ and $(i,x),(j,y),(k,z) \in I_0\uo \times \ko^\times$.  Then
$$ \Hom_{U_q'(\g\uo)}\big( V\uo(i,x)\tens V\uo(j,y), V\uo(k,z) \big) \ne 0 $$
if and only if one of the following conditions holds:
\begin{eqnarray}&&
\left\{\hs{2ex}\parbox{73ex}{
\begin{enumerate}
\item[{\rm (D-i)}] $\ell \seteq \max(i,j,k) \le N-2$, $s+m =\ell$ for $\{ s,m \} \seteq \{ i,j,k\} \setminus \{ \ell \}$
and
$$ \big( x/z,y/z \big) =
\begin{cases}
\big( (-q)^{-j},(-q)^{i} \big), & \text{ if } \ell = k,\\
\big( (-q)^{-j},(-q)^{-i+2N-2} \big), & \text{ if } \ell = i,\\
\big( (-q)^{j-2N+2},(-q)^{i} \big), & \text{ if } \ell = j,
\end{cases}
$$
\item[{\rm (D-ii)}] $i+j \geq N$, $k=2N-2-i-j$, $\max(i,j,k) \le N-2$, and $x/z=(-q)^{-j}$, $y/z=(-q)^i$,
\item[{\rm (D-iii)}] $s \seteq \min(i,j,k) \le N-2$, $ \{ m,\ell \}  \seteq \{ i,j,k\} \setminus \{ s \} \subset \{ N-1,N\}$ and
\begin{itemize}
\item $N-s  \equiv  \begin{cases} \ell-m \modt & \text{ if } s = k, \\
\ell-m^*  \modt & \text{ if } s \ne k, \end{cases}$
\item
$\big( x/z,y/z \big) =
\begin{cases}
\big( (-q)^{-N+k+1},(-q)^{N-k-1} \big), & \text{ if } s= k,\\
\big( (-q)^{-N+i+1},(-q)^{2i} \big), & \text{ if } s = i,\\
\big( (-q)^{-2j},(-q)^{N-j-1} \big), & \text{ if } s = j.
\end{cases}
$
\end{itemize}
\end{enumerate}
}\right. \label{eq: Dorey D untwisted}
\end{eqnarray}
\end{enumerate}
\end{proposition}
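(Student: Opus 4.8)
This proposition is a restatement of \cite[Theorems~6.1 and 7.1]{CP96} (``Dorey's rule'' for $U_q'(A^{(1)}_N)$ and $U_q'(D^{(1)}_N)$), so strictly speaking it suffices to quote it; but the plan below shows how it can be recovered from the machinery recalled above, which is the template we shall follow for the twisted analogue in Section~5. The first step is a reduction. Since $V\uo(i,x)$ and $V\uo(j,y)$ are good, hence real simple, the tensor product $V\uo(i,x)\tens V\uo(j,y)$ has a simple head by \cite{KKKO14A}; consequently, for any simple module $L\in\Ca_{\g\uo}$,
$$\Hom_{U_q'(\g\uo)}\bl V\uo(i,x)\tens V\uo(j,y),\,L\br\neq0\iff L\simeq{\rm hd}\bl V\uo(i,x)\tens V\uo(j,y)\br.$$
So the problem becomes: decide for which $(i,x)$ and $(j,y)$ the head is a fundamental module, and identify it as the appropriate $V\uo(k,z)$.

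For necessity, I would first note that the dominant extremal weight of $V\uo(i,x)\tens V\uo(j,y)$ is $\varpi_i+\varpi_j$, which is not the dominant extremal weight of any fundamental module; hence the tensor product is itself never fundamental, and a nonzero $\Hom$ to $V\uo(k,z)$ forces $V\uo(k,z)$ to be a \emph{proper} quotient. In particular the tensor product is reducible, so $d_{i,j}^{\g\uo}(y/x)=0$, which restricts $y/x$ to the finite explicit list given by Theorem~\ref{thm: denominator of untwisted} (in the spinor cases of $D_N\uo$ the mod-$2$ parity conditions of (D-iii) are exactly the conditions for $y/x$ to hit a root of the relevant denominator). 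For each admissible value I would then examine the dominant extremal weight $\mu$ of the head: it lies in $\varpi_i+\varpi_j-\sum_{a\in I_0\uo}\Z_{\ge0}\cl(\alpha_a)$, and if the head equals $V\uo(k,z)$ then $\mu$ must be $W_0$-conjugate to $\varpi_k$; combining this with the $\g_0\uo$-module decomposition of $V(\varpi_i)\tens V(\varpi_j)$ pins down $k$ as a function of $i,j$, which is exactly what produces the relations $k=i+j$, $k=i+j-N-1$, $s+m=\ell$, and $k=2N-2-i-j$. The spectral parameter $z$ is then read off from the list.

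For sufficiency I would, for each configuration on the list, exhibit a nonzero homomorphism. In type $A_N\uo$ I would use the exterior‑power realization $V(\varpi_i)\simeq\Lambda^i(\ko^{N+1})$ of the fundamental modules (Jimbo evaluation modules): the classical wedge surjection $\Lambda^i\tens\Lambda^j\twoheadrightarrow\Lambda^{i+j}$ intertwines the $U_q'(A_N\uo)$‑actions after the spectral shift $(x/z,y/z)=((-q)^{-j},(-q)^i)$, which I would check directly on the Chevalley generators; case (A-ii) then follows from (A-i) by applying the duality $*$ together with \eqref{equation: psta}, which exchanges the two cases via $i\mapsto i^*=N+1-i$. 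In type $D_N\uo$, the cases with $\max(i,j,k)\le N-2$ go the same way with the vector representation and its wedge powers --- the branches in (D-i) and the ``folded'' value $2N-2-i-j$ in (D-ii) reflecting the identification of $\Lambda^{N-1}$ and $\Lambda^{N+1}$ of the $2N$‑dimensional vector space at the quantum‑affine level --- while for the spinor cases (D-iii) I would restrict to a $D_4$ sub‑diagram and verify the statement explicitly on the two half‑spin modules, where the parity constraint is manifest. Alternatively, in place of the duality one can realize $V\uo(k,z)$ as a submodule of the relevant tensor product, using simplicity of socles, and pass back to $\Hom$.

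The hard part will be the interaction of the last two steps in type $D_N\uo$: ruling out that the head is a non‑fundamental module of the same dominant weight $\mu$, and keeping track of the precise mod‑$2$ parity conditions throughout. A fully rigorous treatment at that point needs either explicit $q$‑characters of the relevant fundamental modules (via the Frenkel--Mukhin algorithm) or a reduction to rank $\le4$, rather than the $\g_0\uo$‑character argument alone; this is where \cite{CP96} carries out its detailed case analysis, and the corresponding point in the twisted setting of Section~5 is where the denominator formulas of Theorem~\ref{thm: denominator of twisted} do the real work.
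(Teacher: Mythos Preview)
The paper gives no proof of this proposition at all: it is simply quoted from \cite[Theorems 6.1 and 7.1]{CP96}, as you yourself note in your first sentence. Your subsequent sketch is a reasonable (and honestly self-aware) outline of the Chari--Pressley argument, so there is no mathematical gap in what you wrote.

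There is, however, a substantive inaccuracy in your framing. You say the sketch ``is the template we shall follow for the twisted analogue in Section~5,'' but the paper's proof of Theorem~\ref{thm: twist Dorey} does \emph{not} follow this template. Rather than rebuilding explicit homomorphisms via exterior powers or $q$-characters in the twisted setting, the paper argues indirectly through the duality functors: given $(i,x),(j,y)$ connected by an arrow and not mutually dual, Lemma~\ref{lem:3} locates both inside some $\Gamma_Q$, so that the fundamental modules in question lie in $\Ca_Q^{(1)}$ and $\Ca_Q^{(2)}$; then the bijections on simples induced by $\Fun_Q^{(1)}$ and $\Fun_Q^{(2)}$ (Theorems~\ref{thm: QASWD functor Q} and \ref{thm: simples to simples}) reduce both the untwisted and twisted epimorphism questions to the single statement $\Im(f)\simeq S_Q(\gamma)$ in $\Rep(R)$. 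Thus the untwisted Dorey rule is used as a black box rather than re-proved, and the twisted rule is deduced from it via the categorical equivalence on Grothendieck rings, not via a parallel direct construction. Your outline would work as an independent approach, but it is not what the paper does, and the denominator formulas of Theorem~\ref{thm: denominator of twisted} enter only through Proposition~\ref{prop: quiver iso} (the quiver isomorphism $\Se_0(\g^{(1)})\simeq\Se_0(\g^{(2)})$), not through a case-by-case analysis of heads.
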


\begin{remark} \label{rem: Dorey rule for untwisted} \hfill
\begin{enumerate}
\item[{\rm (a)}]
Note that the triples $( (i,x),(j,y),(k,z) )$
in Proposition \ref{prop: Dorey untwisted}
belong to the same connected component of
$\Se(\g\uo)$. $\g\uo=A^{(1)}_{N}$ or $D^{(1)}_{N}$.
\item[{\rm (b)}] Let  $((i,x),(j,y),(k,z))$ be a triple appearing in Proposition \ref{prop: Dorey untwisted}. Then
the normalized R-matrix
$\Rnorm_{V\uo(\varpi_i),V\uo(\varpi_j)}(z)$ has a multiple pole at $z=y/x$
if and only if $((i,x),(j,y),(k,z))$ satisfies the condition {\rm
(D-ii)} in \eqref{eq: Dorey D untwisted}. (See
Lemma 3.2.4 in \cite{KKK13B}.)
\end{enumerate}
\end{remark}

\begin{proposition}[{\cite[Section 3]{Oh14}, \cite[Corollary 3.3.2]{KKKO14A}}] \label{prop: Dorey twisted} \hfill
\begin{enumerate}
\item[{\rm (a)}] Let $(i,x),(j,y),(k,z) \in \Se_0(A^{(2)}_N)$.
Assume that $$\text{$(i,x) = \pi_{A^{(1)}_N} (i',x')$, $(j,y) = \pi_{A^{(1)}_N} (j',y')$ and
$(k,z) = \pi_{A^{(1)}_N} (k',z')$}$$ such that $(i',x'),(j',y'),(k',z') \in \Se_0(A^{(1)}_N)$ satisfy one of the conditions
in \eqref{eq: Dorey A untwisted}.
Then we have an epimorphism
$$ V^{(2)}(i,x)\tens V^{(2)}(j,y)\epito V^{(2)}(k,z). $$
\item[{\rm (b)}] Let $(i,x),(j,y),(k,z) \in \Se_0(D^{(2)}_N)$.
Assume that $$\text{$(i,x) = \pi_{D^{(1)}_N} (i',x')$, $(j,y) =
\pi_{D^{(1)}_N} (j',y')$ and $(k,z) = \pi_{D^{(1)}_N} (k',z')$}$$
such that $(i',x'),(j',y'),(k',z') \in \Se_0(D^{(1)}_N)$ satisfy one
of the conditions {\rm (D-i)} or  {\rm (D-iii)} in
\eqref{eq: Dorey D untwisted}
  Then we have an epimorphism
$$ V^{(2)}(i,x)\tens V^{(2)}(j,y)\epito V^{(2)}(k,z). $$
\end{enumerate}
\end{proposition}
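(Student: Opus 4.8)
\medskip
\noindent\textbf{Proof strategy.}
The plan is to read both assertions off the explicit denominators of Theorems~\ref{thm: denominator of untwisted} and \ref{thm: denominator of twisted} together with the fact, established in Proposition~\ref{prop: quiver iso}, that $\pi_{A^{(1)}_N}$ and $\pi_{D^{(1)}_N}$ are quiver morphisms, and then to promote the resulting pole of the normalized $R$-matrix to the asserted epimorphism as in \cite{Oh14, KKKO14A}. One should resist the temptation to deduce this instead by pushing the untwisted Dorey surjection $V_Q\uo(\beta_1)\tens V_Q\uo(\beta_2)\epito V_Q\uo(\gamma)$ forward through the exact functor $\Fun_Q^{(2)}$: that would presuppose the identification $\Fun_Q^{(2)}(S_Q(\beta))\simeq V_Q\ut(\beta)$, which is only obtained in Section~4 and whose proof itself uses the present proposition.

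\emph{Step 1 (matching of parameters).} Writing $(i,x)=\pi_{\g\uo}(i',x')$, $(j,y)=\pi_{\g\uo}(j',y')$ and $(k,z)=\pi_{\g\uo}(k',z')$, I would verify, by going through the cases of Proposition~\ref{prop: Dorey untwisted} one at a time, that the untwisted Dorey condition on $(i',x'),(j',y'),(k',z')\in\Se_0(\g\uo)$ forces the twisted denominator $d^{\,\g\ut}_{i,j}$ of Theorem~\ref{thm: denominator of twisted} to vanish at $y/x$. For type $A$ the first factor $\prod_s(z-(-q)^{|i-j|+2s})$ of Theorem~\ref{thm: denominator of twisted}~(a) reproduces verbatim the untwisted zero on the region of $\Se_0(A^{(1)}_N)$ on which $\pi_{A^{(1)}_N}$ acts as the identity, while the second factor $\prod_s\bl z+q^{N+1}(-q)^{-i-j+2s}\br$ accounts for the arrows that arise after folding, namely when one of $i',j'$ lies beyond $\lfloor (N+1)/2\rfloor$ and is reflected by \eqref{eq: star A}; the arithmetic there is bookkeeping with the sign $(-1)^N$ and with the passage of $p^*$ from $(-q)^{N+1}$ to $-q^{N+1}$ recorded in \eqref{eq:dual2}. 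In essence this step merely re-expresses the statement that $\pi_{\g\uo}$ sends the arrow of $\Se_0(\g\uo)$ predicted by the untwisted denominator to an arrow of $\Se_0(\g\ut)$, which is Proposition~\ref{prop: quiver iso}; the only extra content is that we now need the \emph{location} of the zero, not merely its existence. For part~(b) one should also note that only (D-i) and (D-iii) of \eqref{eq: Dorey D untwisted} are claimed: these yield simple zeros that transport cleanly under $\pi_{D^{(1)}_N}$, whereas (D-ii) is the double-pole configuration of Remark~\ref{rem: Dorey rule for untwisted}~(b) and is not needed here, consistently with the $D\ut_N$-denominator of Theorem~\ref{thm: denominator of twisted}~(b) being a polynomial in $z^2$ in the range $1\le i,j\le N-2$.

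\emph{Step 2 (from the pole to the epimorphism).} Once the zero of $d^{\,\g\ut}_{i,j}$ at $y/x$ is in hand, the normalized $R$-matrix $\Rnorm_{V\ut(\vpi_i),V\ut(\vpi_j)}$ has a pole there, so the specialized tensor product $V\ut(i,x)\tens V\ut(j,y)$ is reducible; one then identifies the relevant constituent with the fundamental module $V\ut(k,z)$ by a highest-weight-vector computation in its $\vpi_k$-weight space, controlling multiplicities by means of the dimension identity $\dim_\ko V\uo(i',x')=\dim_\ko V\ut(i,x)$ of Proposition~\ref{prop: same dim}, and concludes surjectivity from the simplicity of $V\ut(k,z)$. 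This is precisely what \cite[Corollary~3.3.2]{KKKO14A} establishes for $\g\ut=A^{(2)}_N$ and \cite[Section~3]{Oh14} for $\g\ut=D^{(2)}_N$. The step I expect to be the genuine obstacle is exactly this identification: a zero of the denominator only forces reducibility, and pinning down that the newly appearing composition factor is \emph{precisely} $V\ut(k,z)$ --- and not some other object supported on the same connected component of $\Se(\g\ut)$ --- is delicate, and it is here that the explicit $R$-matrix and crystal-basis analysis of \cite{Oh14, KKKO14A} (or, alternatively, a $q$-character comparison transported through $\pi_{\g\uo}$) is really needed; Step~1 by contrast is a finite if tedious case check.
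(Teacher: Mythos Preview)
Your analysis is correct, and in particular your observation about circularity is exactly right: the paper does not give an independent proof of this proposition at all, but simply cites \cite[Section~3]{Oh14} and \cite[Corollary~3.3.2]{KKKO14A}. It then uses the proposition (through Proposition~\ref{prop: minimal liked}) as an input to Theorem~\ref{thm: FQ2 SQ}, which in turn feeds into the proof of the stronger Theorem~\ref{thm: twist Dorey} in Section~\ref{Sec: Dorey's rule}. So the logical architecture is precisely the one you describe: Proposition~\ref{prop: Dorey twisted} is the bootstrap, established externally by the explicit denominator and $R$-matrix computations you outline in Steps~1 and~2, and only afterwards can the functorial argument upgrade it to the full Dorey rule including case~(D-ii).

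Your sketch of what those references do is accurate in shape. The one place where your account could be sharpened is Step~2: in the cited works the identification of the simple quotient with $V\ut(k,z)$ is not done by a bare highest-weight-vector count but rather (in \cite{KKKO14A} for type $A$) by an explicit realization of the surjection on the level of tableaux/vector modules, and (in \cite{Oh14} for type $D$) by combining the denominator zeros with the known structure of tensor products of fundamental modules. You are right that this is where the genuine work lies; the pole alone gives only reducibility.
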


\begin{proposition} [\cite{Oh14A,Oh14D}] \label{prop: minimal liked}
Let $\g\uo=A^{(1)}_{N}$ \ro respectively,  $D^{(1)}_{N}$\rf, and let $Q$ be a
Dynkin quiver of type $\g_0\uo$.
 Assume that $(\beta,\gamma)$ is a minimal pair of $\al\in\Delta^+_{\g_0\uo}$
with respect to the convex total order induced by a reduced expression
$\redex$ of $w_0$ adapted to $Q$,
and  write
$$\text{$\phi^{-1}(\beta,0)=(i,a)$, $\phi^{-1}(\gamma,0)=(j,b)$ and
$\phi^{-1}(\al,0)=(k,c)$.}$$
Then the triple $(i,(-q)^a),(j,(-q)^b),(k,(-q)^c)$ satisfies one of
the conditions {\rm (A-i)} or {\rm (A-ii)} in \eqref{eq:
Dorey A untwisted} {\rm(}respectively,  {\rm (D-i)} or
{\rm (D-iii)} in \eqref{eq: Dorey D untwisted}{\rm)}. In particular,
there is a surjective homomorphism
$$V_Q\utt(\gamma)\tens V_Q\utt(\beta)\epito V_Q^{(t)}(\al).$$
\end{proposition}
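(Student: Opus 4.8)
The proposition bundles two assertions: (I) the combinatorial statement that the triple $\bigl((i,(-q)^a),(j,(-q)^b),(k,(-q)^c)\bigr)$ is one of the configurations classified in Proposition~\ref{prop: Dorey untwisted}, i.e.\ satisfies (A-i) or (A-ii) of \eqref{eq: Dorey A untwisted} (resp.\ (D-i) or (D-iii) of \eqref{eq: Dorey D untwisted}); and (II) the surjection $V_Q\utt(\gamma)\tens V_Q\utt(\beta)\epito V_Q^{(t)}(\al)$. The plan is to establish (I) first and then read off (II). Once (I) is in hand, (II) for $t=2$ follows at once from Proposition~\ref{prop: Dorey twisted}: its hypothesis is exactly that the $\pi_{\g\uo}$-preimages of the entries of the triple satisfy one of the admissible conditions --- and these entries lie in $\Se_0(\g\uo)$ since the image of $\phi^{-1}$ does --- while its conclusion, by the definition of $V_Q\ut$ and up to the order of the two source factors, is the desired surjection $V_Q\ut(\gamma)\tens V_Q\ut(\beta)\epito V_Q\ut(\al)$. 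The case $t=1$ of (II) is obtained the same way from Proposition~\ref{prop: Dorey untwisted} together with the simplicity of $V_Q\uo(\al)$, or directly from the argument below.

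The main tool for (I) is the exact functor $\Fun_Q^{(1)}$. Since $\redex$ is adapted to $Q$, we have $S_\redex(\delta)\simeq S_Q(\delta)$ (Remark~\ref{rem: SQ(beta)}(b)). I would apply $\Fun_Q^{(1)}$ (exact by Theorem~\ref{thm: QASWD functor}(c)) to the exact sequence of Theorem~\ref{thm: PBW}(iv) attached to the minimal pair $(\beta,\gamma)$ of $\al$, using $\Fun_Q^{(1)}(S_Q(\delta))\simeq V_Q\uo(\delta)$ (Theorem~\ref{thm: QASWD functor Q}(i)) and $\Fun_Q^{(1)}(M\conv N)\simeq\Fun_Q^{(1)}(M)\tens\Fun_Q^{(1)}(N)$ (Theorem~\ref{thm: QASWD functor}(b)). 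This has two consequences. First, applying $\Fun_Q^{(1)}$ to the surjection $S_Q(\gamma)\conv S_Q(\beta)\epito S_Q(\al)$ occurring in that sequence gives $V_Q\uo(\gamma)\tens V_Q\uo(\beta)\epito V_Q\uo(\al)$, which is (II) for $t=1$. Second, since $\Fun_Q^{(1)}$ carries simple modules to simple modules (Theorem~\ref{thm: QASWD functor Q}(ii)), the module $V_Q\uo(\beta)\tens V_Q\uo(\gamma)$ has composition length exactly $2$. Feeding the non-vanishing $\Hom$ from the first consequence into the ``if and only if'' of Proposition~\ref{prop: Dorey untwisted} forces the triple into the classification stated there; for type $A$ that classification consists only of (A-i) and (A-ii), so (I) is proved.

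For type $D$ one must, in addition, rule out the configuration (D-ii), and this is the crux. By Remark~\ref{rem: Dorey rule for untwisted}(b), (D-ii) occurs precisely when the relevant normalized $R$-matrix has a multiple pole, equivalently --- by the denominator formula of Theorem~\ref{thm: denominator of untwisted}(b) --- when $d^{\;D\uo_N}_{i,j}(z)$ has a zero of order $\ge2$ at the point determined by $(\beta,\gamma)$. But a double zero of this denominator would force $V_Q\uo(\beta)\tens V_Q\uo(\gamma)$ to have composition length at least $3$, contradicting the length $2$ found above; hence no minimal pair can fall into (D-ii), and (I) follows in type $D$ too. I expect the implication ``double pole $\Rightarrow$ length $\ge3$'' to be the delicate point here, reflecting the exceptional behaviour of tensor products of fundamental $D\uo_N$-modules in that case (cf.\ \cite{KKK13B}). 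Alternatively, (I) in type $D$ can be proved entirely inside the Auslander--Reiten quiver $\Gamma_Q$ --- by classifying the minimal pairs of each positive root (a root near the branch node of $D_N$ may admit two of them) and computing $\phi$ on the resulting triples via \eqref{eq: phi} --- which is the route of \cite{Oh14A, Oh14D}, where the bookkeeping amounts exactly to checking that only the multiplicity-free cases (D-i) and (D-iii) arise.
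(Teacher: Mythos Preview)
The paper does not give a proof here; the proposition is imported from \cite{Oh14A,Oh14D}, which establish (I) by a direct combinatorial classification of minimal pairs in the Auslander--Reiten quiver $\Gamma_Q$ together with an explicit evaluation of $\phi$ on the resulting triples---exactly the ``alternative'' you outline in your last paragraph. So your fallback is in fact the actual argument.

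Your primary route via $\Fun_Q^{(1)}$ is different and has two problems. The first is circularity: you use Theorem~\ref{thm: QASWD functor Q}(i), i.e.\ $\Fun_Q^{(1)}(S_Q(\delta))\simeq V_Q\uo(\delta)$, as a black box. But compare how the present paper proves the twisted analogue, Theorem~\ref{thm: FQ2 SQ}: the inductive step for $\Ht(\alpha)\ge2$ relies on Proposition~\ref{prop: minimal liked} to supply the surjection $V_Q\ut(\gamma)\tens V_Q\ut(\beta)\epito V_Q\ut(\alpha)$ and thereby identify the simple head. The proof in \cite{KKK13B} of Theorem~\ref{thm: QASWD functor Q}(i) has the same inductive shape and needs, for each minimal pair, the untwisted surjection $V_Q\uo(\gamma)\tens V_Q\uo(\beta)\epito V_Q\uo(\alpha)$---equivalently (I), by the ``only if'' direction of Proposition~\ref{prop: Dorey untwisted}. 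So Theorem~\ref{thm: QASWD functor Q}(i) already consumes (I), and deriving (I) back from it closes a loop.

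The second problem is the step you yourself flag: ``double pole of $\Rnorm$ $\Rightarrow$ length $\ge3$''. Nothing cited in the paper furnishes this implication, and it is not a formal consequence of the pole order alone. Your length-$2$ computation for $V_Q\uo(\beta)\tens V_Q\uo(\gamma)$ (from the six-term sequence plus the simples-to-simples property) is correct, but without an independent lower bound on the length in the (D-ii) configuration it does not exclude (D-ii). The combinatorial route of \cite{Oh14A,Oh14D} bypasses both issues.
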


In Section \ref{Sec: Dorey's rule}, we will prove the following
theorem by using the exact functor $\Fun^{(2)}_Q$.
Note that Proposition~\ref{prop: Dorey twisted}
is a special case of this theorem.

\begin{theorem} \label{thm: twist Dorey}
Let $\g\uo=A^{(1)}_{N}$ or $D^{(1)}_{N}$, and let $(i,x),(j,y),(k,z)
\in \Se(\g\uo)$. Then there exists an epimorphism
$$V^{(2)}(\pi_{\g\uo}(i,x))\tens V^{(2)}(\pi_{\g\uo}(j,y)) \epito V^{(2)}(\pi_{\g\uo}(k,z))$$ if and only if there exists an epimorphism
$$V^{(1)}(i,x)\tens V^{(1)}(j,y) \epito V^{(1)}(k,z).$$
\end{theorem}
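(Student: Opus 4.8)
The plan is to transport the statement about $\U[\g\ut]$-module epimorphisms to a statement about $\U[\g\uo]$-modules by using the two exact KLR-type duality functors $\Fun_Q^{(1)}$ and $\Fun_Q^{(2)}$, which share the same source category $\Rep(R)$ for a common quiver Hecke algebra $R$ of type $\g_0\uo$. The first reduction is to observe that it suffices to prove the theorem when the triple $((i,x),(j,y),(k,z))$ lies in a single connected component $\Se_0(\g\uo)$ of $\Se(\g\uo)$: if the three points do not lie in one connected component, then by the structure of denominators (Theorem~\ref{thm: denominator of untwisted} and Remark~\ref{rem: Dorey rule for untwisted}(a)) there is no nonzero homomorphism $V^{(1)}(i,x)\tens V^{(1)}(j,y)\to V^{(1)}(k,z)$, and likewise $\pi_{\g\uo}$ preserves connected components (Proposition~\ref{prop: quiver iso}), so both sides are trivially false. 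Moreover, up to an overall spectral-parameter shift (which does not affect the existence of the epimorphisms on either side, nor the validity of $\pi_{\g\uo}$) we may assume the connected component is $\Se_0(\g\uo)$ normalized as in the excerpt, so that $(i,x),(j,y),(k,z)$ are of the form $(i,(-q)^a)$, $(j,(-q)^b)$, $(k,(-q)^c)$ with $a,b,c$ having the correct parities; and by a further shift we may place them in the image of $J$ under $\phi$, i.e.\ realize $V^{(1)}(i,x)=V_Q^{(1)}(\beta)$, etc., for suitable positive roots $\beta,\gamma,\alpha\in\Delta^+_{\g_0\uo}$ and a suitably chosen Dynkin quiver $Q$.

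Next I would identify the relevant $R$-modules. By Theorem~\ref{thm: QASWD functor Q}(i), $\Fun_Q^{(1)}(S_Q(\beta))\simeq V_Q^{(1)}(\beta)$, and the analogous statement for $\Fun_Q^{(2)}$ is exactly what is proved in Section~4 of the paper (one of the main results: $\Fun_Q^{(2)}$ sends simples to simples, and more precisely $\Fun_Q^{(2)}(S_Q(\beta))\simeq V_Q^{(2)}(\beta)$). Thus both $V^{(1)}(\beta),V^{(1)}(\gamma),V^{(1)}(\alpha)$ and their twisted counterparts are images under the respective functors of the \emph{same} simple $R$-modules $S_Q(\beta),S_Q(\gamma),S_Q(\alpha)$. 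Since $\Fun_Q^{(t)}$ are exact and send convolution products to tensor products (Theorem~\ref{thm: QASWD functor}(b),(c)), and since the head of a convolution product is sent to the head of the tensor product (exactness plus surjectivity of $\Fun_Q^{(t)}$ on surjections, and the fact that $\Fun_Q^{(t)}$ sends simples to simples so it sends a simple head to a simple head, i.e.\ to the head), the existence of an epimorphism $V^{(t)}(\beta)\tens V^{(t)}(\gamma)\epito V^{(t)}(\alpha)$ is equivalent to $S_Q(\alpha)$ being (isomorphic to) the head of $S_Q(\beta)\conv S_Q(\gamma)$ in $\Rep(R)$ — a statement that no longer mentions $t$. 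Running this equivalence for $t=1$ and for $t=2$ gives the theorem, once one checks that the pairing of spectral parameters is matched correctly on both sides, i.e.\ that the same positive roots $\beta,\gamma,\alpha$ (in the same order $\beta,\gamma$) arise; this is where Proposition~\ref{prop: Dorey twisted} and Proposition~\ref{prop: minimal liked} are used to pin down the correspondence for the minimal-pair cases, and the explicit Dorey-type conditions \eqref{eq: Dorey A untwisted}, \eqref{eq: Dorey D untwisted} are used to reduce the general case to minimal pairs via convexity of $\preceq_Q$.

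The main obstacle, and the step I would spend the most care on, is the reduction from an arbitrary epimorphism $V^{(1)}(i,x)\tens V^{(1)}(j,y)\epito V^{(1)}(k,z)$ to the minimal-pair situation covered by Proposition~\ref{prop: minimal liked}: a priori the Dorey-type homomorphisms in Proposition~\ref{prop: Dorey untwisted} involve triples that need not be of the shape $(\phi^{-1}(\beta,0),\phi^{-1}(\gamma,0),\phi^{-1}(\alpha,0))$ for a single fixed quiver $Q$ and need not form a minimal pair for $\preceq_Q$. The way around this is to note that for \emph{any} triple satisfying \eqref{eq: Dorey A untwisted} or \eqref{eq: Dorey D untwisted} and lying in $\Se_0(\g\uo)$, one can choose a Dynkin quiver $Q$ (equivalently an adapted reduced expression $\redex$ of $w_0$, equivalently a height function) so that $\alpha=\beta+\gamma$ is realized with $\phi^{-1}(\beta,0),\phi^{-1}(\gamma,0)$ the prescribed vertices — this is essentially the content of Proposition~\ref{prop: minimal liked} run in reverse, together with the well-known freedom in choosing $Q$ — and that the epimorphism $V^{(t)}(\beta)\tens V^{(t)}(\gamma)\epito V^{(t)}(\alpha)$ does not depend on $Q$, only on the underlying modules and spectral parameters. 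Once the triple is realized as a minimal pair for some $Q$, the functor argument of the previous paragraph applies verbatim. A secondary technical point to verify carefully is that $\pi_{\g\uo}$ correctly intertwines the two realizations, i.e.\ that $\Fun_Q^{(2)}(S_Q(\beta))\simeq V^{(2)}(\pi_{\g\uo}(\phi^{-1}(\beta,0)))$, which follows from Corollary~\ref{cor: FQ2} (the quiver $\Se^J$ is again $Q^{\rm rev}$, so the functor $\Fun_Q^{(2)}$ really is built from the same $R$) together with the main theorem of Section~4.
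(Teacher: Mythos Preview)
Your overall strategy---use the two exact functors $\Fun_Q^{(1)}$ and $\Fun_Q^{(2)}$ from the common category $\Rep(R)$ to transport the question of whether $V^{(t)}_Q(\gamma)$ is the simple head of $V^{(t)}_Q(\al)\tens V^{(t)}_Q(\beta)$ to the $t$-independent question of whether $S_Q(\gamma)$ is the head of $S_Q(\al)\conv S_Q(\beta)$---is exactly what the paper does, and your second paragraph states it correctly. But you have misidentified both the main obstacle and the tool that resolves it.

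The reduction step is not ``by a further shift we may place them in the image of $J$ under $\phi$'' (a minor slip: $J=\phi^{-1}(\Pi_{\g_0\uo}\times\{0\})$ indexes only simple roots; you mean $\Gamma_Q$), nor is it a reduction to minimal pairs. The genuine issue is: given a pair $(i,x),(j,y)\in\Se_0(\g\uo)$ connected by an arrow, can one choose a Dynkin quiver $Q$ and height function so that \emph{both} $(i,x)$ and $(j,y)$ lie in $\Gamma_Q$? Once this is known, the third vertex $(k,z)$ automatically lands in $\Ca_Q^{(t)}$ (as a quotient of a tensor product of objects there), and by \eqref{eq:fundQ} corresponds to some $V_Q^{(t)}(\gamma)$ with the \emph{same} $\gamma$ for $t=1,2$; your functor argument then applies directly, with no appeal to minimal pairs, convexity, or Propositions~\ref{prop: Dorey twisted}--\ref{prop: minimal liked}. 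The paper isolates precisely this step as Lemma~\ref{lem:3} (requiring in addition that the two fundamental modules not be dual to each other, which is the only obstruction) and proves it by a short case analysis on the denominator formulas of Theorem~\ref{thm: denominator of untwisted}.

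Your proposed detour through minimal pairs is not only unnecessary but would leave a gap: triples of type (D-ii) in \eqref{eq: Dorey D untwisted} never arise from minimal pairs (Proposition~\ref{prop: minimal liked} explicitly yields only (A-i), (A-ii), (D-i), (D-iii)), yet Theorem~\ref{thm: twist Dorey} must cover them. The paper's Lemma~\ref{lem:3} handles (D-ii) triples on equal footing with the others, since it only needs the pair to be joined by an arrow in $\Se(\g\uo)$.
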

Note that, for an arbitrary quantum affine algebra $\U$,
$V(\vpi_i)_x\tens V(\vpi_j)_y$ has a simple head by
\cite{KKKO14}.
Hence we have
$$\dim \Hom_{\U}\bl V(\vpi_i)_x\tens V(\vpi_j)_y, V(\vpi_k)_z\br\le1.$$

\section{The functor $\QFun{2}$}
In this section, we investigate the properties of the exact functor
$\Fun^{(2)}_Q$ given in \eqref{eq: FQ2}.
Recall that $R$ is the symmetric quiver Hecke algebra of type $\g_0\uo$.

\begin{theorem} \label{thm: FQ2 SQ}
For any Dynkin quiver $Q$ of type $\g_0\uo$
and $\al\in\Delta^+_{\g_0\uo}$, we have
$$\Fun^{(2)}_Q(S_Q(\al)) \simeq V^{(2)}_Q(\al),$$
where $S_Q(\al)$ denotes the $R(\al)$-module defined in {\rm Remark \ref{rem: SQ(beta)}}.
\end{theorem}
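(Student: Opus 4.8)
The plan is to prove the isomorphism $\Fun^{(2)}_Q(S_Q(\al))\simeq V^{(2)}_Q(\al)$ by induction on $\Ht(\al)$, exactly paralleling the proof of Theorem~\ref{thm: QASWD functor Q}(i) in \cite{KKK13B}, but replacing the untwisted Dorey-type statement (Proposition~\ref{prop: minimal liked}) by its twisted counterpart. The base case $\Ht(\al)=1$, i.e.\ $\al=\al_j$ a simple root, follows directly: by Remark~\ref{rem: SQ(beta)}(a) we have $S_Q(\al_j)\simeq L(j)$, and by Theorem~\ref{thm: QASWD functor}(a) applied to the triple $(J,X\ut,s\ut)$ of Corollary~\ref{cor: FQ2} we get $\Fun^{(2)}_Q(L(j))\simeq (V_{s\ut(j)})_{X\ut(j)}=V\ut\bl\pi_{\g\uo}(j,(-q)^p)\br=V^{(2)}_Q(\al_j)$ where $\phi^{-1}(\al_j,0)=(j,p)$.

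For the inductive step, fix $\al\in\Delta^+_{\g_0\uo}$ with $\Ht(\al)\ge2$ and choose a reduced expression $\redex$ of $w_0$ adapted to $Q$; then $\al=\beta_k$ for some $k$ and, since $\al$ is not simple, it admits a minimal pair $(\beta,\gamma)$ with $\beta\prec_Q\al\prec_Q\gamma$ and $\Ht(\beta),\Ht(\gamma)<\Ht(\al)$. Theorem~\ref{thm: PBW}(iv) gives the four-term exact sequence
$$0\to S_Q(\al)\to S_Q(\beta)\conv S_Q(\gamma)\overset{\mathbf r}{\Lto} S_Q(\gamma)\conv S_Q(\beta)\to S_Q(\al)\to 0.$$
Applying the exact functor $\Fun^{(2)}_Q$ and using Theorem~\ref{thm: QASWD functor}(b) together with the induction hypothesis $\Fun^{(2)}_Q(S_Q(\beta))\simeq V^{(2)}_Q(\beta)$, $\Fun^{(2)}_Q(S_Q(\gamma))\simeq V^{(2)}_Q(\gamma)$, we obtain an exact sequence
$$0\to \Fun^{(2)}_Q(S_Q(\al))\to V^{(2)}_Q(\beta)\tens V^{(2)}_Q(\gamma)\To[\ \Fun^{(2)}_Q(\mathbf r)\ ] V^{(2)}_Q(\gamma)\tens V^{(2)}_Q(\beta)\to \Fun^{(2)}_Q(S_Q(\al))\to 0.$$
Hence $\Fun^{(2)}_Q(S_Q(\al))\simeq\Ker\bl\Fun^{(2)}_Q(\mathbf r)\br\simeq\operatorname{coker}\bl\Fun^{(2)}_Q(\mathbf r)\br$. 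Now I invoke Proposition~\ref{prop: minimal liked}, which asserts precisely that the triple corresponding to the minimal pair $(\beta,\gamma)$ satisfies one of the Dorey conditions and hence yields a surjection $V^{(2)}_Q(\gamma)\tens V^{(2)}_Q(\beta)\epito V^{(2)}_Q(\al)$; dualizing (using Proposition~\ref{prop: quiver iso}(iii) so that duals are controlled by $\pi_{\g\uo}$) also gives the injection $V^{(2)}_Q(\al)\hookrightarrow V^{(2)}_Q(\beta)\tens V^{(2)}_Q(\gamma)$. The remaining task is to identify the image/kernel of $\Fun^{(2)}_Q(\mathbf r)$ with $V^{(2)}_Q(\al)$: one shows that $\Fun^{(2)}_Q(\mathbf r)$ is, up to a nonzero scalar, the composition $V^{(2)}_Q(\beta)\tens V^{(2)}_Q(\gamma)\epito V^{(2)}_Q(\al)\hookrightarrow V^{(2)}_Q(\gamma)\tens V^{(2)}_Q(\beta)$, because $\Hom_{\U[\g\ut]}\bl V^{(2)}_Q(\beta)\tens V^{(2)}_Q(\gamma),\,V^{(2)}_Q(\gamma)\tens V^{(2)}_Q(\beta)\br$ is spanned by this one map (this Hom-space being one-dimensional by the simple-head property cited after Theorem~\ref{thm: twist Dorey}, applied to the normalized $R$-matrix), and $\Fun^{(2)}_Q(\mathbf r)$ is nonzero since $\Fun^{(2)}_Q$ is exact and faithful-enough on the relevant objects (a nonzero map between nonzero modules remains nonzero). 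Counting dimensions via Proposition~\ref{prop: same dim} confirms $\Ker\bl\Fun^{(2)}_Q(\mathbf r)\br$ has the dimension of $V^{(2)}_Q(\al)$ and the identification is forced.

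\textbf{Main obstacle.} The delicate point is verifying that $\Fun^{(2)}_Q(\mathbf r)$ is genuinely nonzero and factors through $V^{(2)}_Q(\al)$ with the correct multiplicity, rather than merely through some quotient or sub. In \cite{KKK13B} the analogous argument leans on the fact that the relevant $R$-matrix $R^{\mathrm{norm}}_{V_Q(\beta),V_Q(\gamma)}(z)$ has a \emph{simple} pole in the generic (non-(D-ii)) situation, so the head and socle are as expected; here I must check that the minimal pairs arising from $Q$ never fall into the multiple-pole case on the twisted side. This is controlled by Proposition~\ref{prop: minimal liked}, which guarantees the triple satisfies (A-i)/(A-ii) or (D-i)/(D-iii) — precisely the cases \emph{excluded} by the multiple-pole criterion of Remark~\ref{rem: Dorey rule for untwisted}(b) — and by the general fact that $V(\vpi_i)_x\tens V(\vpi_j)_y$ has simple head \cite{KKKO14}, which pins down the Hom-spaces to be at most one-dimensional. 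Assembling these inputs carefully, together with the observation that $\Fun^{(2)}_Q$ sends the nonzero morphism $\mathbf r$ to a nonzero morphism (since $S_Q(\al)\ne0$ and $\Fun^{(2)}_Q(S_Q(\al))\ne0$, as one sees inductively from nonvanishing on simples), closes the argument.
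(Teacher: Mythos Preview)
Your overall strategy---induction on $\Ht(\al)$, base case via Theorem~\ref{thm: QASWD functor}(a), inductive step via the four-term exact sequence of Theorem~\ref{thm: PBW}(iv) and Proposition~\ref{prop: minimal liked}---is exactly the paper's. The gap is at the step where you assert $\Fun^{(2)}_Q(\mathbf r)\ne 0$. You justify this by saying $\Fun^{(2)}_Q$ is ``faithful-enough'' and that ``$\Fun^{(2)}_Q(S_Q(\al))\ne 0$, as one sees inductively from nonvanishing on simples.'' Neither claim is available at this point: exactness does not imply faithfulness, and faithfulness of $\Fun^{(2)}_Q$ is established only later (Corollary~\ref{cor:simple to simple}(ii)), as a \emph{consequence} of the present theorem via Theorem~\ref{thm: simples to simples}. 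And the induction hypothesis tells you about $S_Q(\beta)$ and $S_Q(\gamma)$, not about $S_Q(\al)$; there is no a priori reason the functor could not kill $S_Q(\al)$.

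The paper closes this gap with a short contradiction argument that you are missing: if $\Fun^{(2)}_Q(S_Q(\al))=0$, then the exact sequence forces $\Fun^{(2)}_Q(\mathbf r)$ to be an isomorphism $V^{(2)}_Q(\beta)\tens V^{(2)}_Q(\gamma)\isoto V^{(2)}_Q(\gamma)\tens V^{(2)}_Q(\beta)$, and then \cite[Corollary~3.16]{KKKO14} (commutation of two real simples implies simplicity of the tensor product) contradicts the non-simplicity coming from the proper surjection $V^{(2)}_Q(\gamma)\tens V^{(2)}_Q(\beta)\epito V^{(2)}_Q(\al)$ of Proposition~\ref{prop: minimal liked}. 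Once nonvanishing is secured, the identification of $\Fun^{(2)}_Q(S_Q(\al))$ with $V^{(2)}_Q(\al)$ follows directly from the simple-head property \cite{KKKO14} and \cite[Proposition~4.5]{KKKO15B}: the image of \emph{any} nonzero map $V^{(2)}_Q(\gamma)\tens V^{(2)}_Q(\beta)\to V^{(2)}_Q(\beta)\tens V^{(2)}_Q(\gamma)$ is the simple head of the source, and that head is $V^{(2)}_Q(\al)$ by Proposition~\ref{prop: minimal liked}. Your detours through dualization, simple-pole analysis, and dimension counting via Proposition~\ref{prop: same dim} are therefore unnecessary, and in any case they do not substitute for the missing nonvanishing argument.
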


\begin{proof}
We shall prove our assertion by  induction on $\Ht(\al)$. For
$\al$ with $\Ht(\al)=1$, our assertion follows from Theorem
\ref{thm: QASWD functor} (a). Now we assume that $\Ht(\al) \ge
2$. Note that there exists a minimal pair $(\beta,\gamma)$ of
$\al$. By Theorem \ref{thm: PBW} (c), we have a six-term exact
sequence of $R(\al)$-modules
$$ 0 \to S_{Q}(\al)
\Lto S_{Q}(\beta) \conv S_{Q}(\gamma) \overset{r}{\Lto}
S_{Q}(\gamma) \conv S_{Q}( \beta ) \overset{s}{\Lto}
S_{Q}(\al) \to 0.$$ Applying the functor $\Fun^{(2)}_Q$,
 we have an exact sequence of $U_q'(\g)$-modules by the induction
hypothesis
$$ 0 \to \Fun^{(2)}_Q(S_{Q}(\al))
\to V^{(2)}_{Q}(\beta) \tens V^{(2)}_{Q}(\gamma) \overset{\Fun^{(2)}_Q(r)}{\Lto}
V^{(2)}_{Q}(\gamma) \tens V^{(2)}_{Q}(\beta) \overset{\Fun^{(2)}_Q(s)}{\Lto} \Fun^{(2)}_Q(S_{Q}(\al)) \to 0.$$
On the other hand, Proposition~\ref{prop: minimal liked} implies that
$V^{(2)}_{Q}(\gamma) \tens V^{(2)}_{Q}(\beta)$ is not simple.

We have then
$\Fun^{(2)}_Q(S_{Q}(\al)) \ne 0$. Indeed, if it vanished, we would have
$$V^{(2)}_{Q}(\beta) \tens V^{(2)}_{Q}(\gamma) \simeq V^{(2)}_{Q}(\gamma)
\tens V^{(2)}_{Q}(\beta),$$
which implies that $V^{(2)}_{Q}(\gamma) \tens V^{(2)}_{Q}(\beta)$ is simple
by \cite[Corollary 3.16]{KKKO14}.

Hence $\Fun^{(2)}_Q(S_{Q}(\al))$ is the image of
a non-zero homomorphism
$V^{(2)}_{Q}(\gamma) \tens V^{(2)}_{Q}(\beta)\to
 V^{(2)}_{Q}(\beta) \tens V^{(2)}_{Q}(\gamma) $.
Hence, \cite{KKKO14} and the quantum affine version of
 \cite[Proposition 4.5]{KKKO15B} imply that $\Fun^{(2)}_Q(S_{Q}(\al))$ is the simple head of
$V^{(2)}_{Q}(\gamma) \tens V^{(2)}_{Q}(\beta)$.

On the other hand,
$V^{(2)}_{Q}(\al)$ is a simple quotient of
$V^{(2)}_{Q}(\gamma) \tens V^{(2)}_{Q}(\beta)$
by Proposition~\ref{prop: minimal liked}.
Hence we obtain the desired result
$\Fun^{(2)}_Q(S_{Q}(\al))\simeq V^{(2)}_{Q}(\al)$.
\end{proof}

\begin{lemma} \label{lem: k<l no poles} Let $\beta,\gamma\in \Delta^+_{\g_0\uo}$
and $t=1,2$.
If $ \Rnorm_{V\utt_Q(\beta),V\utt_Q(\gamma)}(z)$ has a pole at $z=1$, then
$\gamma\prec_Q \beta$.
\end{lemma}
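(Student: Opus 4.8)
## Proof plan for Lemma~4.4

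The plan is to reduce the statement to a fact about the denominators $d_{k,l}^{\g\utt}(z)$ together with the combinatorics of the Auslander–Reiten quiver $\Gamma_Q$. Write $\phi^{-1}(\beta,0)=(k,a)$ and $\phi^{-1}(\gamma,0)=(l,b)$, so that $V_Q\utt(\beta)=V\utt(\pi_{\g\uo}(k,(-q)^a))$ and similarly for $\gamma$ (when $t=1$, read $\pi$ as the identity). The normalized $R$-matrix $\Rnorm_{V\utt_Q(\beta),V\utt_Q(\gamma)}(z)$ having a pole at $z=1$ means, by the definition \eqref{eq: denom} of the denominator and the definition \eqref{eq: Quiver S} of $\Se(\g\utt)$, that there is an arrow from the vertex carrying $V_Q\utt(\beta)$ to the vertex carrying $V_Q\utt(\gamma)$ in $\Se(\g\utt)$ — equivalently, via the quiver isomorphism $\pi_{\g\uo}\col \Se_0(\g\uo)\isoto\Se_0(\g\ut)$ of Proposition~\ref{prop: quiver iso}, that there is an arrow $(k,(-q)^a)\to (l,(-q)^b)$ in $\Se_0(\g\uo)$. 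So it suffices to treat the untwisted case $t=1$ and then transport along $\pi_{\g\uo}$.

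First I would make precise the translation "arrow in $\Se_0(\g\uo)$ at spectral ratio $1$'' in terms of the exponents appearing in Theorem~\ref{thm: denominator of untwisted}. A pole of $\Rnorm_{V\uo(\vpi_k),V\uo(\vpi_l)}(z)$ at $z=1$ forces $(-q)^{b-a}$ to be one of the roots of $d_{k,l}^{\g\uo}(z)$, which by \eqref{eq: denominator A1}–\eqref{eq: denominator D1} pins down $b-a$ to a specific value in terms of $k$, $l$, and (for $D_N\uo$) the relative position of $k,l$ with respect to $N-1,N$. In every case one reads off that $b-a \geq d(k,l)$ and, crucially, that $b-a \geq d(k,l)$ holds with the right sign, i.e. $b \geq a + d(k,l)$. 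By the characterization \eqref{eq: preceq Q} of the convex order $\preceq_Q$ — namely $\gamma'\preceq_Q \beta'$ iff $d(i,j)\le a'-b'$ where $\phi^{-1}(\beta',0)=(i,a')$, $\phi^{-1}(\gamma',0)=(j,b')$ — the inequality $b-a\ge d(k,l)$ with the indices as above is exactly the statement $\gamma\preceq_Q\beta$, and since $\beta\neq\gamma$ (the $R$-matrix of $V_Q\uo(\beta)$ with itself has no pole at $z=1$, as $V_Q\uo(\beta)$ is real simple) we get $\gamma\prec_Q\beta$. This handles $t=1$.

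For $t=2$, I would invoke Proposition~\ref{prop: quiver iso}: $\pi_{\g\uo}$ is a quiver isomorphism $\Se_0(\g\uo)\isoto\Se_0(\g\ut)$, so an arrow $V_Q\ut(\beta)\to V_Q\ut(\gamma)$ in $\Se_0(\g\ut)$ — i.e. a pole of $\Rnorm_{V_Q\ut(\beta),V_Q\ut(\gamma)}(z)$ at $z=1$ — corresponds under $\pi_{\g\uo}^{-1}$ to an arrow $(k,(-q)^a)\to(l,(-q)^b)$ in $\Se_0(\g\uo)$, which is a pole of $\Rnorm_{V_Q\uo(\beta),V_Q\uo(\gamma)}(z)$ at $z=1$; then the $t=1$ case gives $\gamma\prec_Q\beta$. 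Thus the twisted case is formally reduced to the untwisted one with no extra work beyond quoting Proposition~\ref{prop: quiver iso}.

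The main obstacle is the case-by-case verification in the second paragraph: one must check, for each of the several regimes in \eqref{eq: denominator A1} and \eqref{eq: denominator D1} (and their $A\ut$/$D\ut$ analogues if one prefers to argue directly on the twisted side rather than transporting), that every root of $d_{k,l}^{\g}(z)$ of the form $(-q)^m$ (or $\pm(-q)^m$, or $(-q^2)^m$) forces the exponent $m=b-a$ to satisfy $m\ge d(k,l)$, with the inequality oriented correctly so as to land in $\gamma\prec_Q\beta$ rather than $\beta\prec_Q\gamma$. For $A_N\uo$ the minimal exponent is $|k-l|+2=d(k,l)+2-1$... more carefully $|k-l|+2s$ with $s\ge1$, and $d(k,l)=|k-l|$, so $m\ge |k-l|+2>d(k,l)$; for $D_N\uo$ one uses $d(k,l)$ in the Dynkin diagram of $D_N$ and checks each branch, the delicate one being when $k$ or $l$ lies in $\{N-1,N\}$. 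This is routine but must be done; everything else is bookkeeping with the quiver isomorphism and the definition of $\preceq_Q$.
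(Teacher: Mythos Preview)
Your proposal is correct and follows essentially the same route as the paper: reduce to $t=1$ via the quiver isomorphism of Proposition~\ref{prop: quiver iso}, observe that a pole at $z=1$ forces $(-q)^{b-a}$ to be a root of $d_{k,l}^{\g\uo}(z)$, and read off from Theorem~\ref{thm: denominator of untwisted} that $b-a\ge d(k,l)+2$, which gives $\gamma\prec_Q\beta$ by \eqref{eq: preceq Q}. The paper compresses the case check into the single observation $d(i,j)+2\le b-a$ (which already yields strictness, making your separate $\beta\neq\gamma$ argument unnecessary), but the content is the same.
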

\begin{proof}
By Proposition \ref{prop: quiver iso}, it is enough to prove the case $t=1$.
Set $\phi^{-1}(\beta,0)=(i,a)$ and $\phi^{-1}(\gamma,0)=(j,b)$.
Assume that $ \Rnorm_{V\utt_Q(\beta),V\utt_Q(\gamma)}(z)$ has a pole at $z=1$, or equivalently,
the denominator $d_{ij}(z)$ has a zero at $z=(-q)^{b-a}$.
Then we can easily see
that $d(i,j)+2\le b-a$
by Theorem \ref{thm: denominator of untwisted}.
Here $d(i,j)$ is the distance of $i$ and $j$ in the Dynkin diagram.
Then $\gamma\preceq_Q\beta$ by the definition \eqref{eq: preceq Q}.
\end{proof}

For $\{m_\beta\}_{\beta \in \Delta^+_{\g_0\uo}}
\in \bl\Z_{\ge 0}\br^{\Delta^+_{\g_0\uo}}$, let us denote
by $\mathop{\tens\limits^{\rightarrow}}\nolimits_{\beta \in \Delta^+_{\g_0\uo}}\,(V^{(2)}_Q(\beta))^{ \tens m_\beta} )$  the increasingly ordered tensor product with respect to a total order on $\Delta^+_{\g_0\uo}$ stronger than  $\preceq_Q$.
The isomorphism class of the module
$ \mathop{\tens\limits^{\rightarrow}}\nolimits_{\beta \in \Delta^+_{\g_0\uo}}\,(V^{(2)}_Q(\beta))^{ \tens m_\beta}$ does not depend on the choice of such a total order
by Lemma~\ref{lem: k<l no poles}.
Note also that $\tens\limits^{\rightarrow}\, (V^{(2)}_Q(\beta))^{ \tens m_\beta} )$ has a
simple head by Lemma~\ref{lem: k<l no poles} and
\cite[{Theorem 2.2.1 (ii)}]{KKK13A}.

\begin{definition} \label{def:CQ2}
 The category $\Ca^{(2)}_Q$ is
the full subcategory of $\Ca_{\g\ut}$ consisting of objects
$M$ such that any  simple subquotient of $M$ is isomorphic to
$ {\rm hd}(\mathop{\tens\limits^{\rightarrow}}\nolimits_{\beta \in \Delta^+_{\g_0\uo}}\,(V^{(2)}_Q(\beta))^{ \tens m_\beta} )$ for some $\{m_\beta\}_{\beta \in \Delta^+_{\g_0\uo}}
\in \bl\Z_{\ge 0}\br^{\Delta^+_{\g_0\uo}}$.
\end{definition}
Note that there is a similar characterization of $\Ca_{Q}\uo$.

By the definition, $\Ca_Q\ut$ is stable under taking subquotients
and extensions. Note that we have \eq&& \text{If $V\utt(\vpi_i)_x$
belongs to $\Ca^{(t)}_Q$, then it is isomorphic to $V_Q\utt(\beta)$
for some $\beta\in\Delta_{\g_0\uo}^+$.} \label{eq:fundQ} \eneq
Indeed, the dominant extremal weight of
$\mathrm{hd}\bl\tens\limits^{\rightarrow}\, (V^{(t)}_Q(\beta))^{
\tens m_\beta} )\br$ is equal to $\sum_{\beta \in
\Delta^+_{\g_0\uo}}m_\beta \varpi_{i_\beta}$ for $t=1$
(respectively, is equal to   $\sum_{\beta \in
\Delta^+_{\g_0\uo}}m_\beta  \varpi_{i'_\beta}$ for $t=2$), where
$\phi(i_\beta, p_\beta) = (\beta,0)$ and $i'_\beta$ denotes the
first component of $\pi_{\g\uo}(i_\beta, (-q)^{p_\beta})$ for $\beta
\in \Delta^+_{\g_0\uo}$.

The proof of the following theorem works also for $\Fun_Q^{(1)}$,
and it provides an alternative proof of
\cite[Corollary 4.3.6]{KKK13B}.

\begin{theorem} \label{thm: simples to simples}
The functor $\Fun_Q^{(2)}$ sends a  simple module to a
simple module.
Moreover, the functor  $\Fun^{(2)}_Q$ induces a bijection between
the set of simple modules in $\Rep(R)$ and
the set of simple modules in $\Ca^{(2)}_Q$.
\end{theorem}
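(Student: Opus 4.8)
\textbf{Proof plan for Theorem \ref{thm: simples to simples}.}
The plan is to reduce everything to the already established PBW-type structure (Theorem \ref{thm: PBW}) together with the key computation $\Fun^{(2)}_Q(S_Q(\al))\simeq V^{(2)}_Q(\al)$ of Theorem \ref{thm: FQ2 SQ}, and then match the combinatorics of standard modules on the $R$-side with the combinatorics of ordered tensor products on the $\shc_{\g\ut}$-side. Fix a reduced expression $\redex=s_{i_1}\cdots s_{i_r}$ of $w_0$ adapted to $Q$ and the associated convex order; write $\beta_1,\dots,\beta_r$ for the corresponding enumeration of $\Delta^+_{\g_0\uo}$. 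First I would take an arbitrary simple $R(\al)$-module $M$. By Theorem \ref{thm: PBW}(iii) there is a unique $\um=(m_1,\dots,m_r)\in\Z_{\ge0}^r$ with $M\simeq \mathrm{Im}(\rmat{\um})\simeq\mathrm{hd}(\overset{\Lto}{\nabla}_{\redex}(\um))$, where $\overset{\Lto}{\nabla}_{\redex}(\um)=S_Q(\beta_1)^{\conv m_1}\conv\cdots\conv S_Q(\beta_r)^{\conv m_r}$. Applying the exact functor $\Fun^{(2)}_Q$ and using Theorem \ref{thm: QASWD functor}(b) together with Theorem \ref{thm: FQ2 SQ}, we get
$$\Fun^{(2)}_Q\bigl(\overset{\Lto}{\nabla}_{\redex}(\um)\bigr)\;\simeq\; \bigl(V^{(2)}_Q(\beta_1)\bigr)^{\tens m_1}\tens\cdots\tens\bigl(V^{(2)}_Q(\beta_r)\bigr)^{\tens m_r},$$
which is exactly the increasingly ordered tensor product $\mathop{\tens\limits^{\rightarrow}}\nolimits_{\beta}(V^{(2)}_Q(\beta))^{\tens m_\beta}$ (the total order $\le_{\redex}$ being stronger than $\preceq_Q$, as recalled before Lemma \ref{lem: k<l no poles}). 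Since $\Fun^{(2)}_Q$ is exact, it sends the head of $\overset{\Lto}{\nabla}_{\redex}(\um)$ onto a quotient of the head of the image; by the remark following Definition \ref{def:CQ2} the latter has a simple head. So $\Fun^{(2)}_Q(M)$ is either zero or equal to $\mathrm{hd}(\mathop{\tens\limits^{\rightarrow}}\nolimits_\beta(V^{(2)}_Q(\beta))^{\tens m_\beta})$, hence simple once we rule out vanishing.

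The nonvanishing is the first point to pin down. Since $\Fun^{(2)}_Q$ is exact and $M$ is a subquotient of $\overset{\Lto}{\nabla}_{\redex}(\um)$, if $\Fun^{(2)}_Q(M)=0$ then on Grothendieck groups $[\Fun^{(2)}_Q]$ would annihilate $[M]$; I would instead argue directly that the multiplicity of $M\simeq\mathrm{hd}(\overset{\Lto}{\nabla}_{\redex}(\um))$ in $\overset{\Lto}{\nabla}_{\redex}(\um)$ is $1$ (Theorem \ref{thm: PBW}(ii)), and that all other constituents $\mathrm{Im}(\rmat{\um'})$ with $\um'\prec_\Z\um$ have, by induction on the $\prec_\Z$-order, nonzero simple image under $\Fun^{(2)}_Q$; comparing with the known constituents of the tensor product $\mathop{\tens\limits^{\rightarrow}}\nolimits_\beta(V^{(2)}_Q(\beta))^{\tens m_\beta}$ — whose head is simple and whose other constituents are again of this ordered-head form by Definition \ref{def:CQ2} — forces $\Fun^{(2)}_Q(M)=\mathrm{hd}(\mathop{\tens\limits^{\rightarrow}}\nolimits_\beta(V^{(2)}_Q(\beta))^{\tens m_\beta})\ne 0$. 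This shows the first sentence of the theorem and simultaneously shows that $\Fun^{(2)}_Q$ induces a well-defined map from simples of $\Rep(R)$ to simples of $\Ca^{(2)}_Q$.

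For the bijection, injectivity is immediate: distinct $\um$ give distinct $M$ by Theorem \ref{thm: PBW}(iii), and the images $\mathrm{hd}(\mathop{\tens\limits^{\rightarrow}}\nolimits_\beta(V^{(2)}_Q(\beta))^{\tens m_\beta})$ are pairwise non-isomorphic — this can be seen either by comparing dominant extremal weights plus an induction on $\sum m_\beta$ using the $\prec_\Z$-filtration of the tensor products (the leading constituent determines $\um$), or by the triangularity in \eqref{eq: prec Z} transported through the exact functor. Surjectivity is built into Definition \ref{def:CQ2}: every simple object of $\Ca^{(2)}_Q$ is by definition $\mathrm{hd}(\mathop{\tens\limits^{\rightarrow}}\nolimits_\beta(V^{(2)}_Q(\beta))^{\tens m_\beta})$ for some $\{m_\beta\}$, and taking $M=\mathrm{hd}(\overset{\Lto}{\nabla}_{\redex}(\um))$ with $\um=(m_{\beta_1},\dots,m_{\beta_r})$ gives a simple $R$-module with $\Fun^{(2)}_Q(M)$ isomorphic to it by the paragraph above. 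The main obstacle I anticipate is the nonvanishing/faithfulness-type step: controlling that the $\prec_\Z$-lower constituents on the quiver-Hecke side map onto the corresponding lower constituents of the ordered tensor products on the quantum-affine side, so that the leading term of the image is genuinely $\mathrm{hd}(\mathop{\tens\limits^{\rightarrow}}\nolimits_\beta(V^{(2)}_Q(\beta))^{\tens m_\beta})$ and not something that collapses. This is where one must lean on exactness of $\Fun^{(2)}_Q$, Theorem \ref{thm: FQ2 SQ}, the triangular identities of Theorem \ref{thm: PBW}(ii), and the simple-head property of ordered tensor products quoted after Definition \ref{def:CQ2}; everything else is bookkeeping on Grothendieck groups.
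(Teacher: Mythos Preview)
Your plan has a genuine gap at the step where you conclude simplicity of $\Fun^{(2)}_Q(M)$. You argue that since $\Fun^{(2)}_Q$ is exact and $M=\mathrm{hd}\bigl(\overset{\Lto}{\nabla}_{\redex}(\um)\bigr)$, the module $\Fun^{(2)}_Q(M)$ is a quotient of $\overset{\Lto}{W}_Q(\um)\seteq\Fun^{(2)}_Q\bigl(\overset{\Lto}{\nabla}_{\redex}(\um)\bigr)$, and that since the latter has a simple head, $\Fun^{(2)}_Q(M)$ is ``either zero or equal to'' that head. This is not correct: a nonzero quotient of a module with simple head only has the same simple head; it need not be simple itself. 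Your triangular Grothendieck-group argument in the second paragraph shows that $H(\um)\seteq\mathrm{hd}\bigl(\overset{\Lto}{W}_Q(\um)\bigr)$ must occur as a constituent of $\Fun^{(2)}_Q(M)$ (and hence $\Fun^{(2)}_Q(M)\ne0$), but it does not rule out further constituents: you have no independent description of the composition factors of $\overset{\Lto}{W}_Q(\um)$ on the quantum-affine side to compare against, and your appeal to Definition~\ref{def:CQ2} is circular since $\Ca^{(2)}_Q$ is not yet known to be closed under tensor products.

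The paper closes this gap by using the \emph{image} description $M\simeq\mathrm{Im}(\rmat{\um})$ rather than the head description. Applying the exact functor to the map $\rmat{\um}\colon\overset{\Lto}{\nabla}_Q(\um)\to\overset{\Lgets}{\nabla}_Q(\um)$ exhibits $\Fun^{(2)}_Q(M)$ as the image of $\Fun^{(2)}_Q(\rmat{\um})\colon\overset{\Lto}{W}_Q(\um)\to\overset{\Lgets}{W}_Q(\um)$. Then Lemma~\ref{lem: k<l no poles} verifies the pole hypothesis needed to invoke \cite[Theorem~2.2.1\,(ii)]{KKK13A}, which says that the image of \emph{any} nonzero map from $\overset{\Lto}{W}_Q(\um)$ to $\overset{\Lgets}{W}_Q(\um)$ is simple and equals the head of the source. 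Your induction on $\prec_\Z$ together with \cite[Theorem~2.2.1\,(iii)]{KKK13A} is exactly what the paper uses to show $\Fun^{(2)}_Q(\rmat{\um})\ne0$: the simple head $H(\um)$ cannot appear in the kernel since all its constituents are $H(\um')$ with $\um'\prec_\Z\um$. So most of your outline is on track; what is missing is the use of the backward standard module $\overset{\Lgets}{\nabla}_Q(\um)$ and the image-of-a-nonzero-map argument, which is precisely the mechanism that forces simplicity.
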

\begin{proof}
By Theorem \ref{thm: PBW}, every simple module $M$ in $\Rep(R)$ is isomorphic to the image of the homomorphism
\begin{align} \label{eq: simple step 1}
\rmat{\um} \colon
\overset{\Lto}{\nabla}_{Q}(\um) \seteq \dconv{k=1}{r}S_{Q}(\beta_k)^{\conv m_k}
\to \overset{\Lgets}{\nabla}_{Q}(\um) \seteq \dconv{k=1}{r}S_{Q}(\beta_{r-k+1})^{\conv m_{r-k+1}}
\end{align}
for a unique $\um \in \Z_{\ge 0}^r$.
Moreover we have
\eq
&&[\overset{\Lto}{\nabla}_{Q}(\um)] \in [{\rm Im}(\rmat{\um})] +
\sum_{\um' \prec_\Z \um} \Z_{\ge 0} [{\rm Im}(\rmat{\um'})].
\label{eq:P}
\eneq
Applying the functor $\Fun^{(2)}_Q$ to \eqref{eq: simple step 1}, we obtain
$$  \xymatrix@C=9ex{ \overset{\Lto}{W}_Q(\um) \seteq  \dtens{k=1}{r} V^{(2)}_Q(\beta_k)^{\tens m_k} \ar[r]^{\Fun^{(t)}_Q(\rmat{\um}) \qquad } &
\overset{\Lgets}{W}_Q(\um) \seteq \dtens{k=1}{r}
V^{(2)}_Q(\beta_{r-k+1})^{\tens m_{r-k+1}}}.$$

Now we shall prove that
\eq&&\text{${\rm Im}\big(\Fun^{(2)}_Q(\rmat{\um})\big) \simeq \Fun^{(2)}_Q\big( {\rm Im}(\rmat{\um})\big)$ is simple
and isomorphic to ${\rm hd}(\overset{\Lto}{W}_Q(\um) )$.}\label{cond:simple}
\eneq

If $\um$ is a minimal element of $\Z^r_{\ge 0}\setminus\{0\}$
, i.e. $\um$ is a unit vector, then \eqref{cond:simple}
follows from Theorem~\ref{thm: FQ2 SQ}.
Assume that $\um$ is not minimal.
By Lemma \ref{lem: k<l no poles}, we can apply
\cite[{Theorem 2.2.1 (ii)}]{KKK13A}.
Then,
$\overset{\Lto}{W}_Q(\um)$ has a simple head which is equal to
the image of any non-zero map from $\overset{\Lto}{W}_Q(\um)$ to
$\overset{\Lgets}{W}_Q(\um)$.
Thus it is enough to show that $\Fun^{(t)}_Q(\rmat{\um})$ is non-zero.

By the induction hypothesis on $\prec_\Z$,
every composition
factor of $\Ker\big(\Fun^{(t)}_Q(\rmat{\um})\big)$ is of the form
${\rm hd}(\overset{\Lto}{W}_Q(\um'))$ for some $\um' \prec_\Z \um$.
By \cite[Theorem 2.2.1 (iii)]{KKK13A},   ${\rm
hd}(\overset{\Lto}{W}_Q(\um))$ is  isomorphic to ${\rm
hd}(\overset{\Lto}{W}_Q(\um'))$ if and only if $\um' = \um$.
Thus we conclude that ${\rm hd}(\overset{\Lto}{W}_Q(\um))$
cannot appear as a composition factor of
$\Ker\big(\Fun^{(t)}_Q(\rmat{\um})\big)$, which yields that
$\Fun^{(t)}_Q(\rmat{\um})$ is non-zero.
\end{proof}

\begin{corollary}\label{cor:simple to simple} \hfill
\begin{enumerate}
\item[{\rm (i)}]The functor $\Fun^{(2)}_Q$ induces a functor
$\Rep(R)\to \Ca^{(2)}_Q$.
\item[{\rm (ii)}] The functor $\Fun^{(2)}_Q$ is faithful.
\item[{\rm (iii)}]
The category $\Ca^{(2)}_Q$ is stable under taking tensor products.
\end{enumerate}
\end{corollary}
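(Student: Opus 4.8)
The plan is to read off all three claims from results already established, principally Theorem~\ref{thm: simples to simples}, the exactness of $\Fun^{(2)}_Q$, the convolution--tensor compatibility of Theorem~\ref{thm: QASWD functor}~(b), and the fact (recorded right after Definition~\ref{def:CQ2}) that $\Ca^{(2)}_Q$ is a Serre subcategory of $\Ca_{\g\ut}$, i.e.\ closed under subquotients and extensions. None of the three items is deep; the only step needing mild care is the reduction from simple modules to arbitrary ones in~(iii).

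For~(i), given $M\in\Rep(R)$ I would take a composition series $0=M_0\subset M_1\subset\cdots\subset M_n=M$. Since $\Fun^{(2)}_Q$ is exact, applying it produces a filtration of $\Fun^{(2)}_Q(M)$ whose successive quotients are the $\Fun^{(2)}_Q(M_k/M_{k-1})$, and by Theorem~\ref{thm: simples to simples} each of these is simple and isomorphic to a module of the form described in Definition~\ref{def:CQ2}. Hence every simple subquotient of $\Fun^{(2)}_Q(M)$ has the prescribed shape, so $\Fun^{(2)}_Q(M)\in\Ca^{(2)}_Q$, and $\Fun^{(2)}_Q$ corestricts to a functor $\Rep(R)\to\Ca^{(2)}_Q$.

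For~(ii), I would invoke the standard fact that an exact functor between abelian categories which annihilates no nonzero object is faithful. Concretely, if $f\col M\to N$ in $\Rep(R)$ satisfies $\Fun^{(2)}_Q(f)=0$, then exactness gives $\Fun^{(2)}_Q(\im f)=\im\Fun^{(2)}_Q(f)=0$; were $\im f\neq0$ it would have a composition factor $S$, whose image $\Fun^{(2)}_Q(S)$ is a nonzero (in fact simple) subquotient of $\Fun^{(2)}_Q(\im f)$ by Theorem~\ref{thm: simples to simples}, a contradiction. So $\im f=0$ and $f=0$; thus $\Fun^{(2)}_Q$, and equally its corestriction to $\Ca^{(2)}_Q$, is faithful.

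For~(iii), I would first settle the case of simple modules. If $S,S'$ are simple objects of $\Ca^{(2)}_Q$, the bijection in Theorem~\ref{thm: simples to simples} lets me write $S\simeq\Fun^{(2)}_Q(M)$ and $S'\simeq\Fun^{(2)}_Q(M')$ with $M,M'\in\Rep(R)$ simple; then Theorem~\ref{thm: QASWD functor}~(b) gives $S\tens S'\simeq\Fun^{(2)}_Q(M\conv M')$, and since $M\conv M'\in\Rep(R)$, part~(i) yields $S\tens S'\in\Ca^{(2)}_Q$. For general $M,N\in\Ca^{(2)}_Q$ I would run a d\'evissage: as $\Ca_{\g\ut}$ is a tensor category, $-\tens N$ and $S\tens-$ are exact, so a composition series of $M$ filters $M\tens N$ with subquotients $S\tens N$ ($S$ a composition factor of $M$, hence simple in $\Ca^{(2)}_Q$), and a composition series of $N$ then filters each $S\tens N$ with subquotients $S\tens T$ ($T$ simple in $\Ca^{(2)}_Q$), which lie in $\Ca^{(2)}_Q$ by the simple case; since $\Ca^{(2)}_Q$ is closed under extensions this forces first $S\tens N\in\Ca^{(2)}_Q$ and then $M\tens N\in\Ca^{(2)}_Q$. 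The only genuinely non-formal input is confined to the simple case, where Theorem~\ref{thm: QASWD functor}~(b) and the bijectivity-on-simples of Theorem~\ref{thm: simples to simples} do the work; the remaining bookkeeping with the Serre property of $\Ca^{(2)}_Q$ is the step I would expect to be the only mild obstacle.
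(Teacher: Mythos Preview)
Your proposal is correct and follows essentially the same approach as the paper's proof, which is terser but rests on the same ingredients: (i) via Theorem~\ref{thm: simples to simples} and exactness, (ii) via the fact that an exact functor sending simples to nonzero objects is faithful, and (iii) via the convolution--tensor compatibility together with the bijection on simples. The only difference is that you spell out the d\'evissage in (iii) explicitly, whereas the paper leaves it implicit in the one-line statement that $\Fun^{(2)}_Q$ sends convolution to tensor.
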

\begin{proof}
\rm (i) follows immediately from Theorem~\ref{thm: simples to simples}.

\noi
{\rm (ii)} Since $\Fun^{(2)}_Q$ sends simples to simples,
the module ${\rm Im}({\Fun^{(2)}_Q}(f))$ does not vanish for any non-zero homomorphism $f$.
Hence ${\Fun^{(2)}_Q}(f)$ is non-zero.

\smallskip\noi
{\rm (iii)} follows from the fact that $\Fun^{(2)}_Q\col \Rep(R)\to\Ca_Q^{(2)}$
sends a convolution product to a tensor product.
\end{proof}

By this corollary, we have another characterization of $\Ca^{(2)}_Q$:
$\Ca^{(2)}_Q$ is the smallest full subcategory
of $\Ca_{\g\ut}$ such that it is stable under taking subquotients, extensions,
tensor products and it contains $V_Q\ut(\al_i)$ for all $i\in I_0\uo$.

\begin{corollary} \label{cor: FQ1 FQ2}
The functor $\Fun^{(2)}_Q$ induces a ring isomorphism
$$\varphi^{(2)}:  \K(\Rep(R))\isoto \K(\Ca^{(2)}_Q)$$ and hence a ring isomorphism
$$\varphi \seteq \varphi^{(2)} \circ (\varphi^{(1)})^{-1}: \K(\Ca^{(1)}_Q) \isoto \K(\Ca^{(2)}_Q).$$
Moreover, the ring isomorphism $\varphi$ sends simples to simples and preserves the dimensions.
\end{corollary}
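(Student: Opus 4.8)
The plan is to assemble this corollary directly from the results already established, with essentially no new computation. First I would recall that Theorem~\ref{thm: simples to simples} tells us $\Fun^{(2)}_Q$ sends simples to simples and gives a bijection on isomorphism classes of simple objects between $\Rep(R)$ and $\Ca^{(2)}_Q$, and that Corollary~\ref{cor:simple to simple}(iii) (together with the exactness of $\Fun^{(2)}_Q$ from \eqref{eq: FQ2}) shows $\Ca^{(2)}_Q$ is an abelian tensor subcategory of $\Ca_{\g\ut}$. Since $\Fun^{(2)}_Q\col\Rep(R)\to\Ca^{(2)}_Q$ is exact, it induces a group homomorphism $\varphi^{(2)}\col\K(\Rep(R))\to\K(\Ca^{(2)}_Q)$ on Grothendieck groups; because $\Fun^{(2)}_Q$ carries convolution products to tensor products (Theorem~\ref{thm: QASWD functor}(b), used in Corollary~\ref{cor:simple to simple}(iii)), $\varphi^{(2)}$ is a ring homomorphism.

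Next I would argue that $\varphi^{(2)}$ is a ring isomorphism. The Grothendieck group $\K(\Rep(R))$ is free abelian on the classes of simple $R$-modules, and $\K(\Ca^{(2)}_Q)$ is free abelian on the classes of simple objects of $\Ca^{(2)}_Q$ (both categories being abelian with all objects of finite length). By Theorem~\ref{thm: simples to simples}, $\Fun^{(2)}_Q$ induces a bijection between these two bases. Hence $\varphi^{(2)}$ maps a $\Z$-basis bijectively to a $\Z$-basis, so it is an isomorphism of abelian groups, and being also a ring homomorphism it is a ring isomorphism.

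Then I would combine this with Theorem~\ref{thm: QASWD functor Q}(ii), which furnishes the ring isomorphism $\varphi^{(1)}\col\K(\Rep(R))\isoto\K(\Ca^{(1)}_Q)$ obtained in exactly the same way from the untwisted functor $\Fun^{(1)}_Q$ (note that $\Fun^{(1)}_Q$ and $\Fun^{(2)}_Q$ use the \emph{same} quiver Hecke algebra $R$ of type $\g_0\uo$). Setting $\varphi\seteq\varphi^{(2)}\circ(\varphi^{(1)})^{-1}$ gives the desired ring isomorphism $\K(\Ca^{(1)}_Q)\isoto\K(\Ca^{(2)}_Q)$. That $\varphi$ sends simples to simples is immediate: $\varphi^{(1)}$ identifies the simple classes of $\Ca^{(1)}_Q$ with those of $\Rep(R)$ (by the simple-to-simple part of Theorem~\ref{thm: QASWD functor Q}), and $\varphi^{(2)}$ identifies the latter with the simple classes of $\Ca^{(2)}_Q$.

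Finally, for the dimension-preserving assertion, I would trace a simple object of $\Ca^{(1)}_Q$ through the identifications. By Theorem~\ref{thm: QASWD functor Q}(i) and Theorem~\ref{thm: FQ2 SQ}, both functors send the standard simple $R$-module $S_Q(\beta)$ to $V^{(1)}_Q(\beta)$ and $V^{(2)}_Q(\beta)$ respectively, and more generally the simple $R$-module labelled by $\um$ goes to $\mathrm{hd}\bl\tens\limits^{\rightarrow}(V^{(t)}_Q(\beta))^{\tens m_\beta}\br$ for $t=1,2$ (this is exactly what is proved inside Theorem~\ref{thm: simples to simples}). So $\varphi$ matches $[\mathrm{hd}\bl\tens\limits^{\rightarrow}(V^{(1)}_Q(\beta))^{\tens m_\beta}\br]$ with $[\mathrm{hd}\bl\tens\limits^{\rightarrow}(V^{(2)}_Q(\beta))^{\tens m_\beta}\br]$. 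The main point, and the only place where real input beyond formal bookkeeping enters, is that these two simple modules have equal dimension; I would deduce this from Proposition~\ref{prop: same dim} (which gives $\dim V^{(1)}(i,x)=\dim V^{(2)}(\pi_{\g}(i,x))$, hence $\dim V^{(1)}_Q(\beta)=\dim V^{(2)}_Q(\beta)$ for every $\beta$) combined with Theorem~\ref{thm: twist Dorey}. Concretely, one shows by induction on $\sum_\beta m_\beta$, using the convex order $\preceq_Q$ and the six-term exact sequences coming from minimal pairs (Theorem~\ref{thm: PBW}(iv), Proposition~\ref{prop: minimal liked}), that the Jordan--H\"older multiplicities of $\tens\limits^{\rightarrow}(V^{(t)}_Q(\beta))^{\tens m_\beta}$ in the basis of simples are the \emph{same} for $t=1$ and $t=2$ --- equivalently, that $\varphi$ is compatible with the two PBW-type bases --- and since each matching pair of simples has equal dimension by the induction hypothesis and the equality $\dim\bl\tens^{\rightarrow}(V^{(1)}_Q(\beta))^{\tens m_\beta}\br=\dim\bl\tens^{\rightarrow}(V^{(2)}_Q(\beta))^{\tens m_\beta}\br$ (a product of equal factors), the head dimensions agree as well. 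I expect this last inductive bookkeeping --- verifying that the transition matrix between the simple basis and the standard tensor-product basis is literally the same on both sides --- to be the only nontrivial obstacle; everything else is a formal consequence of the cited theorems.
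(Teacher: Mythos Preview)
Your proof is correct and follows the paper's approach: for the dimension claim the paper simply invokes the induction argument of \cite[Theorem~3.5.10]{KKKO14A} together with Proposition~\ref{prop: same dim}, which is precisely the PBW-type induction you outline. One simplification worth noting is that Theorem~\ref{thm: twist Dorey} and the minimal-pair exact sequences are not needed to see that the Jordan--H\"older multiplicities agree on the two sides: since $\varphi^{(1)}$ and $\varphi^{(2)}$ are ring isomorphisms sending $[S_Q(\beta)]$ and $[\Im(\rmat{\um})]$ to the corresponding classes in $\K(\Ca^{(t)}_Q)$, the transition matrix between the standard tensor-product basis and the simple basis is in each case the image of the one in $\K(\Rep(R))$ and hence literally the same for $t=1,2$; the induction then runs over the partial order $\prec_\Z$ of \eqref{eq: prec Z} (within a fixed weight) rather than over $\sum_\beta m_\beta$.
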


\begin{proof}

The first assertion follows from the preceding corollary and the fact that $\Fun_Q^{(2)}$ sends a convolution product to a tensor product. For the last assertion on the dimension, we can apply the same induction argument of \cite[Theorem 3.5.10]{KKKO14A} along with Proposition \ref{prop: same dim}.
\end{proof}

\begin{conjecture}
The functor ${\mathcal F}_{Q}^{(t)}\col\Rep(R)\to\Ca_Q^{(t)}$ $(t=1,2)$ is
an equivalence of categories.
\end{conjecture}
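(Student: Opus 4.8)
The conjecture asserts that $\Fun_Q^{(t)}\col\Rep(R)\to\Ca_Q^{(t)}$ is an equivalence of categories for $t=1,2$. Since the $t=2$ case is parallel (via Proposition~\ref{prop: quiver iso}), and the $t=1$ case would follow from the same argument, I will describe a unified plan.

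\textbf{The plan.} The functor $\Fun_Q^{(t)}$ is exact (Theorem~\ref{thm: QASWD functor}(c), since $\Se^J\simeq Q^{\rm rev}$ is of type $A_N$ or $D_N$), faithful (Corollary~\ref{cor:simple to simple}(ii)), sends simples to simples, and induces a bijection on isomorphism classes of simple objects (Theorem~\ref{thm: simples to simples}) as well as a ring isomorphism on Grothendieck groups (Corollary~\ref{cor: FQ1 FQ2}). What remains for an equivalence is \emph{fullness} together with \emph{essential surjectivity}. Essential surjectivity should follow once we know every object of $\Ca_Q^{(t)}$ has a finite composition series with factors of the form $\mathrm{hd}(\overset{\Lto}{W}_Q(\um))$ (which is built into Definition~\ref{def:CQ2}) and that $\Fun_Q^{(t)}$ is full: one then reconstructs an object of $\Ca_Q^{(t)}$ as an iterated extension inside the image. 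So the crux is to prove that $\Fun_Q^{(t)}$ induces isomorphisms $\Hom_{R}(M,N)\isoto\Hom_{U_q'(\g)}(\Fun_Q^{(t)}(M),\Fun_Q^{(t)}(N))$ and, more generally, isomorphisms on all $\Ext^1$ groups (computed inside $\Ca_Q^{(t)}$, which is stable under extensions by construction). Equivalently, since both categories are finite-length abelian categories with the same simples (matched by $\Fun_Q^{(t)}$), it suffices to show $\Fun_Q^{(t)}$ induces an isomorphism $\Ext^1_R(L,L')\isoto\Ext^1_{\Ca_Q^{(t)}}(\Fun_Q^{(t)}L,\Fun_Q^{(t)}L')$ for all pairs of simples $L,L'$ — injectivity of this map gives fullness plus exactness on extensions by a standard dévissage, and surjectivity gives that no new extensions appear.

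\textbf{Key steps, in order.} First I would reduce to simple modules: by faithfulness and exactness, and a five-lemma induction on composition length, it is enough to handle $\Hom$ and $\Ext^1$ between simples. Second, using the PBW-type description (Theorem~\ref{thm: PBW}) and the fact that $\Fun_Q^{(t)}$ carries $S_Q(\beta)$ to $V_Q^{(t)}(\beta)$ (Theorem~\ref{thm: FQ2 SQ}), I would express any simple $R$-module as the head of a standard module $\overset{\Lto}{\nabla}_Q(\um)$ and its image as the head of the corresponding standard $U_q'(\g)$-module $\overset{\Lto}{W}_Q(\um)$, which is a $\preceq_Q$-ordered tensor product of fundamental evaluation modules. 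Third, I would compare $\Hom$ and $\Ext^1$ on both sides by exploiting that the convolution product on $\Rep(R)$ matches the tensor product on $\Ca_Q^{(t)}$, together with the control of $R$-matrix poles provided by Lemma~\ref{lem: k<l no poles} and \cite[Theorem 2.2.1]{KKK13A}; the key input is that the normalized $R$-matrices among the $V_Q^{(t)}(\beta)$ have poles governed by the same combinatorics (the convex order $\preceq_Q$) as the $R$-matrices defining $R$, so that short exact sequences of standard modules on the $R$ side (the six-term sequences of Theorem~\ref{thm: PBW}(iv)) are sent precisely to the analogous short exact sequences on the quantum affine side. Fourth, an induction on $\Ht$ (or on $\prec_\Z$) upgrades the equality of Grothendieck classes (Corollary~\ref{cor: FQ1 FQ2}) to an equivalence at the level of $\Hom$ and $\Ext^1$, and then a final dévissage over composition length yields fullness and essential surjectivity onto $\Ca_Q^{(t)}$.

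\textbf{The main obstacle.} The hard part will be step three: proving that $\Fun_Q^{(t)}$ does not kill or create any extensions, i.e., that $\Ext^1_R(L,L')\to\Ext^1_{\Ca_Q^{(t)}}(\Fun_Q^{(t)}L,\Fun_Q^{(t)}L')$ is an isomorphism. Exactness of $\Fun_Q^{(t)}$ gives that this map is well-defined and, combined with faithfulness, one can hope for injectivity; but surjectivity — that every self-extension in $\Ca_Q^{(t)}$ already comes from $\Rep(R)$ — is genuinely delicate and seems to require either a dimension/character count matching $\Ext$-dimensions on both sides, or a categorification statement identifying $\Ca_Q^{(t)}$ with a module category over $R$ directly (in the spirit of the monoidal-categorification results of \cite{KKKO14A}). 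A clean route would be to first establish the untwisted equivalence $\Fun_Q^{(1)}\col\Rep(R)\isoto\Ca_Q^{(1)}$ (where the representation theory of $U_q'(\g^{(1)})$ is better developed and $\Ca_Q^{(1)}$ is the Hernandez--Leclerc category), and then transport it through the ring isomorphism $\varphi\col\K(\Ca_Q^{(1)})\isoto\K(\Ca_Q^{(2)})$ of Corollary~\ref{cor: FQ1 FQ2}, checking that $\varphi$ lifts to an exact equivalence $\Ca_Q^{(1)}\simeq\Ca_Q^{(2)}$ compatible with the two duality functors; this last lifting is where the twisted techniques would again be essential.
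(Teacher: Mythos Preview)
The statement you are attempting to prove is labeled a \emph{Conjecture} in the paper, and the paper offers no proof of it; it is posed as an open problem. So there is no ``paper's proof'' to compare your proposal against.

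That said, your outline is a reasonable roadmap of what would need to be checked, but it is not a proof, and you yourself flag the genuine gap. Two points deserve emphasis.

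First, your claim that faithfulness combined with exactness should give injectivity of $\Ext^1_R(L,L')\to\Ext^1_{\Ca_Q^{(t)}}(\Fun_Q^{(t)}L,\Fun_Q^{(t)}L')$ is not justified. Faithfulness says nonzero morphisms go to nonzero morphisms; it says nothing about nonzero extension classes. If $0\to L'\to E\to L\to 0$ is non-split but $0\to \Fun_Q^{(t)}L'\to \Fun_Q^{(t)}E\to \Fun_Q^{(t)}L\to 0$ splits, the splitting $\Fun_Q^{(t)}L\to \Fun_Q^{(t)}E$ need not lie in the image of $\Fun_Q^{(t)}$ --- that would be \emph{fullness}, which is exactly what you are trying to prove. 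So even the ``easy'' half of the $\Ext^1$ comparison is not available from what is established in the paper.

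Second, the surjectivity of the $\Ext^1$ map --- which you correctly isolate as the crux --- is left entirely open in your proposal. You suggest two routes: a dimension/character count matching $\Ext$-groups on both sides, or first establishing the $t=1$ equivalence and then transporting it via the isomorphism $\varphi$ of Corollary~\ref{cor: FQ1 FQ2}. Neither is carried out, and both are themselves substantial. The second route in particular does not simplify matters: the $t=1$ case of the conjecture is no easier than the $t=2$ case (indeed, the paper treats them in parallel), and lifting the ring isomorphism $\varphi$ on Grothendieck groups to an exact equivalence of abelian categories is essentially equivalent to the original conjecture. The dimension count you allude to would require computing $\Ext^1$ between simples in $\Ca_Q^{(t)}$, which is not available from the techniques developed here.

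In short: your plan correctly identifies the structure of the problem, but the two essential steps --- injectivity and surjectivity on $\Ext^1$ between simples --- are both missing, and the paper does not supply them either. This is why the statement is a conjecture.
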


\vskip 3mm

\section{Proof of Theorem \ref{thm: twist Dorey}} \label{Sec: Dorey's rule}
In this section, we give a proof of Theorem \ref{thm: twist
Dorey} for each $\g$, which we have postponed in Section \ref{sec:
similar}.

\begin{lemma}\label{lem:3}
Let $\g\uo=A^{(1)}_N$ or $D^{(1)}_N$. Assume that  $(i,x)$, $(j,y)$ is
a pair of elements of $\Se(\g\uo)$ such that they are
connected by an arrow and that
$V\uo(\vpi_i)_x$ and $V\uo(\vpi_j)_y$ are not dual to each other. Then there
exist a Dynkin quiver of type $\g_0\uo$,
a height function $\xi$, $a\in\ko^\times$ and
$(i,s)$, $(j,t)\in \Gamma_{Q}$ such that
$((i,x),(j,y))=((i,a(-q)^s),(j,a(-q)^t))$.
\end{lemma}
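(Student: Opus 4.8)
The plan is to reduce the statement to a combinatorial fact about the Auslander--Reiten quiver $\Gamma_Q$, namely that any arrow in $\Se(\g\uo)$ between two non-dual vertices is ``realized'' inside some $\Gamma_Q$ up to an overall spectral shift. First I would recall that, by the description of $\Se_0(\g\uo)$ in Section~3 and by the computation of denominators in Theorem~\ref{thm: denominator of untwisted}, an arrow from $(i,x)$ to $(j,y)$ forces $x/y=(-q)^{-m}$ for a specific $m=m(i,j)>0$ appearing in $d_{i,j}(z)$; in particular $(i,x)$ and $(j,y)$ lie in the same connected component, and after multiplying both by a common scalar $a^{-1}$ we may assume $x=(-q)^s$, $y=(-q)^t$ with $s,t$ of the correct parities to sit in the repetition quiver $\widehat Q$ for a suitable height function. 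So the content is: choose $Q$ (hence $\xi$) so that the particular pair $(i,s)$, $(j,t)$ both land in the finite window $\Gamma_Q=\phi^{-1}(\Delta^+_{\g_0\uo}\times\{0\})$ rather than merely in $\widehat Q$.

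Next I would invoke the combinatorial dictionary between $\Gamma_Q$ and convex orders. By \eqref{eq:Qorder} and \eqref{eq: preceq Q}, a vertex $(i,p)\in\widehat Q$ lies in $\Gamma_Q$ precisely when $\phi(i,p)\in\Delta^+_{\g_0\uo}\times\{0\}$, and the set of such $p$ for fixed $i$ is an interval of $\Z$ of length roughly $h$ (the Coxeter number). Given the explicit pair $(i,s),(j,t)$ with $t-s=m(i,j)>0$ and $d(i,j)\le m(i,j)-2$ (the inequality extracted from the denominator formula, as in the proof of Lemma~\ref{lem: k<l no poles}), I would exhibit a Dynkin quiver $Q$ of type $\g_0\uo$ and a height function $\xi$ for which both $(i,s)$ and $(j,t)$ belong to $\Gamma_Q$. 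Concretely, one can take a quiver adapted to a reduced word $\redex$ of $w_0$ chosen so that the roots $\phi(i,s+c)$ and $\phi(j,t+c)$ (for the appropriate global shift $c$ making them land in the $m=0$ sheet) are both positive; the existence of such $\redex$ follows from the fact that any two vertices connected by an arrow of $\Se_0(\g\uo)$ correspond, after the shift, to a pair of positive roots one of which precedes the other in $\preceq_Q$ — and conversely every comparable pair of positive roots can be placed in a single $\Gamma_Q$. Finally I would absorb the shift $c$ into the scalar $a$: setting $a=(-q)^{-c}$ (times the earlier common scalar) gives $((i,x),(j,y))=((i,a(-q)^s),(j,a(-q)^t))$ with $(i,s),(j,t)\in\Gamma_Q$ as required.

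The main obstacle I expect is the case analysis at the ``boundary'' of the Dynkin diagram, where $\g_0\uo=D_N$ and $i$ or $j$ is a spin node $\{N-1,N\}$: there the parity conditions in the definition of $\Se_0(D^{(1)}_{n+1})$ and the branching of $\widehat Q$ make it delicate to simultaneously place both vertices in the finite window, and one must also double-check that excluding the dual pair $V\uo(\vpi_i)_x\simeq {}^*(V\uo(\vpi_j)_y)$ is exactly what rules out the obstruction (a dual pair corresponds, via \eqref{eq:dual1}--\eqref{eq:dual2}, to the two ends of a maximal path in $\Gamma_Q$, which cannot both be interior vertices). For type $A$ the argument is cleaner: the connected component $\Se_0(A^{(1)}_n)$ is a single ``strip'', $\Gamma_Q$ is a parallelogram in it, and any adjacent non-dual pair fits into a translate of such a parallelogram by inspection of \eqref{eq: denominator A1}. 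I would organize the proof as: (1) extract from Theorem~\ref{thm: denominator of untwisted} the relation $t-s=m(i,j)$ and the bound $d(i,j)\le m(i,j)-2$; (2) reduce to $x=(-q)^s$, $y=(-q)^t$ by a scalar; (3) translate into the statement that $\{(i,s),(j,t)\}$ is, up to a global shift, a $\preceq_Q$-comparable pair of vertices of $\Gamma_Q$ for a suitable $Q$, using \eqref{eq:Qorder}; (4) verify this for each diagram type, with the $D_N$ spin-node case treated separately, using the hypothesis that the modules are not mutually dual to exclude the only bad configuration.
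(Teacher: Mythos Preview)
Your plan matches the paper's approach in its essential structure: reduce to $x=(-q)^s$, $y=(-q)^t$ by a scalar, extract from Theorem~\ref{thm: denominator of untwisted} the constraints on $t-s$, and then verify case by case that a suitable $Q$ and $\xi$ exist, treating the $D_N$ spin nodes separately and using non-duality to exclude the one obstructive configuration.

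However, your step~(3) is circular and should be dropped. The bijection $\phi$ and the order $\preceq_Q$ are defined \emph{after} choosing $Q$ and $\xi$, so you cannot invoke them to \emph{find} $Q$; the phrase ``the roots $\phi(i,s+c)$ and $\phi(j,t+c)$ are both positive'' already presupposes the quiver you are searching for, and the claim that ``every comparable pair of positive roots can be placed in a single $\Gamma_Q$'' is tautological for a fixed $Q$. The paper bypasses this entirely and works directly with the window description \eqref{eq: mi}: for fixed $Q$ and $\xi$, one has $(i,p)\in(\Gamma_Q)_0$ if and only if $\xi_i-2m_i\le p\le\xi_i$. It then exhibits, in each case, a \emph{specific} height function (the two linear orientations $\overset{\to}{Q}$, $\overset{\gets}{Q}$ of Example~\ref{ex: quiver Q A} in type $A$; a height function with $\xi_{N-1}=\xi_N$ in type $D$ when $j\le N-2$; and a choice depending on the parity of $N$ when $i,j\in\{N-1,N\}$), reads off $m_i$ from Example~\ref{ex: quiver Q A} or \eqref{eq: D m_i}, and checks the window inequalities directly against the denominator bounds. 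In type $A$, for instance, the denominator condition together with non-duality forces either $t-s\le i+j-2$ or $t-s\le 2N-i-j$, and each inequality is exactly the window bound for one of the two linear orientations. So your step~(4) \emph{is} the proof; step~(3) contributes nothing and should be replaced by these explicit choices.
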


Postponing the proof of this lemma, we shall prove
Theorem~\ref{thm: twist Dorey}.

\begin{proof}[Proof of Theorem~\ref{thm: twist Dorey}]
Assume that there is an epimorphism
\eq
&& V^{(1)}(i,x)\tens V^{(1)}(j,y) \epito V^{(1)}(k,z)\label{eq:epi1}
\eneq
 or an epimorphism
\eq
&&V^{(2)}(\pi_{\g\uo}(i,x))\tens V^{(2)}(\pi_{\g\uo}(j,y))
\epito V^{(2)}(\pi_{\g\uo}(k,z)).\label{eq:epi2}\eneq
Then $(i,x)$ and $(j,y)\in \Se(\g\uo)$ are connected by an arrow.
Moreover $V^{(1)}(i,x)$ and $V^{(1)}(j,y)$ are not dual to each other.
Indeed, if $V^{(1)}(i,x)$ and $V^{(1)}(j,y)$ were dual to each other, then
$V^{(2)}(\pi_{\g\uo}(i,x))$ and $V^{(2)}(\pi_{\g\uo}(j,y))$ would be also dual to each other by Proposition~\ref{prop: quiver iso} (iii). Then,
there would not exist an epimorphism \eqref{eq:epi1} nor \eqref{eq:epi2}.

Hence, by Lemma~\ref{lem:3}, there exist a Dynkin quiver $Q$ of type
$\g_0\uo$, a height function $\xi$, $a\in\ko^\times$ and $(i,s)$,
$(j,t)\in \Gamma_{ Q}$ such that
$((i,x),(j,y))=((i,a(-q)^s),(j,a(-q)^t))$. We may assume $a=1$
without loss of generality. Then, $V^{(1)}(i,x)$ and $V^{(1)}(j,y)$
belong to $\Ca_Q\uo$,  and also $V^{(2)}(\pi_{\g\uo}(i,x))$ and
$V^{(2)}(\pi_{\g\uo}(j,y))$ belong to $\Ca_Q\ut$. Hence $
V^{(1)}(k,z)$ belongs to $\Ca_Q\uo$ or $V^{(2)}(\pi_{\g\uo}
(k,z)) $ belongs to
 $\Ca_Q\ut$. By \eqref{eq:fundQ}, there are $\al,\beta,\gamma\in\Delta_{\g_0\uo}^+$ such that
$ V^{(1)}(i,x)=V_Q\uo(\al)$, $V^{(1)}(j,y)=V_Q\uo(\beta)$,
$V^{(1)}(k,z)=V_Q\uo(\gamma)$, $V^{(2)}(\pi_{\g\uo}(i,x))=V_Q\ut(\al)$,
$V^{(2)}(\pi_{\g\uo}(j,y))= V_Q\ut(\beta)$ and
$V^{(2)}(\pi_{\g\uo}(k,z))=V_Q\ut(\gamma)$.
Let $f\col S_Q(\al)\conv S_Q(\beta)\to S_Q(\beta)\conv S_Q(\al)$ be a
non-zero morphism in $\Rep(R)$.
Then $\Im(f)$ is the simple head of $S_Q(\al)\conv S_Q(\beta)$
and $\Im\bl\Fun\utt_Q(f)\br$ is the simple head of
$V_Q^{(t)}(\al)\tens V_Q^{(t)}(\beta)$.
Therefore, for each $t=1,2$, we have the equivalence relations:
\eqn
&&\text{there exists an epimorphism
$V_Q^{(t)}(\al)\tens V_Q^{(t)}(\beta)\epito  V_Q^{(t)}(\gamma)$}\\
&&\Longleftrightarrow
\Im\bl\Fun\utt_Q(f)\br\simeq V_Q^{(t)}(\gamma)\simeq\Fun\utt\bl S_Q(\gamma)\br\\
&&\Longleftrightarrow\Im(f)\simeq S_Q(\gamma).
\eneqn
Thus we obtain Theorem~\ref{thm: twist Dorey}.
\end{proof}

\medskip
Before starting the proof of
Lemma~\ref{lem:3}, let us remark on  the Auslander-Reiten quiver $\Gamma_Q$.

The set of vertices $(\Gamma_Q)_0$ of $\Gamma_Q$  is described as follows:
\eq
&&
(\Gamma_Q)_0 =\{ (i,p)\in I_0\times\Z \mid \xi_i -2m_i \le p \le \xi_i,
\ p\equiv\xi_i\modt \}
\label{eq: mi}\\
&&\hs{30ex}\text{where $m_i \seteq \max \{ k \in \Z_{\ge 0}\mid
\tau^k(\gamma_i) \in \Delta^+_{\g_0} \}$.}\nn
\eneq

\begin{example} \label{ex: quiver Q A} \hfill
\begin{itemize}
\item[{\rm (a)}] For a quiver $\overset{\to}{Q}\seteq \xymatrix@R=0.5ex{ *{ \bullet }<3pt> \ar@{->}[r]_<{1}  &*{\bullet}<3pt>
\ar@{->}[r]_<{2}  &*{ \bullet }<3pt>
\ar@{.>}[rr] &&*{\bullet}<3pt>
\ar@{->}[r]_<{N-1}  & *{\bullet}<3pt> \ar@{-}[l]^<{\ \ N}
}$ of type $A_{N}$, we have $m_i= N-i$ for all $i \in I_0$.
\item[({\rm b})] For a quiver $\overset{\gets}{Q}\seteq \xymatrix@R=0.5ex{ *{ \bullet }<3pt> \ar@{<-}[r]_<{1}  &*{\bullet}<3pt>
\ar@{<-}[r]_<{2}  &*{ \bullet }<3pt> \ar@{<.}[rr] &&*{\bullet}<3pt>
\ar@{<-}[r]_<{N-1}  & *{\bullet}<3pt> \ar@{-}[l]^<{\ \ N} }$ of type
$A_{N}$, we have $m_i= i-1$ for all $i \in I_0$.
\end{itemize}
\end{example}
Note that, for any Dynkin quiver $Q$ of type $D_N$ and any $1 \le i \le N$, $m_i$'s in \eqref{eq: mi} are given as follows (\cite[Lemma 1.11]{Oh14D}):
\begin{equation} \label{eq: D m_i}
\begin{cases}
m_i = N-2 &  \text{ if } 1 \le i \le N-2, \\
m_{N-1}=N-3, \ m_N=N-1 & \text{ if } N \equiv 1 \modt \text{ and } \xi_{N}=\xi_{N-1}+2, \\
m_{N-1}=N-1, \ m_N=N-3 & \text{ if } N \equiv 1 \modt \text{ and }\xi_{N-1}=\xi_{N}+2,\\
m_{N-1}=m_N=N-2 & \text{ otherwise. }
\end{cases}
\end{equation}

\medskip
\begin{proof}[Proof of Lemma~\ref{lem:3}]
Without loss of generality, we may assume $(i,x)$, $(j,y)\in\Se_0(\g\uo)$.
Set $(i,x)=(i,(-q)^s)$ and $(j,y)=(j,(-q)^t)$. By the
assumption, $(-q)^{\ell}$ is a root of $d_{i,j}(z)$ for $\ell=|s-t|>0$.
In this proof, we only consider the case when $i \ge j$ and $s < t$. The other cases can be proved by similar arguments.

\noindent
(Case $\g=A^{(1)}_{N}$) \quad
Note that
the case $i+j=t-s=N+1$ is excluded by the assumption.
By \eqref{eq: denominator A1}, we have
$$\text{$i-j+2\le t-s\le \min(i+j, 2(N+1)-i-j)$ and $t-s\equiv i-j\modt$.}$$
Hence, one of the following two conditions holds:
\bnum
\item $t-s\le i+j-2$,
\item $t-s\le 2N-i-j$.
\ee
Indeed, otherwise, we have
$i+j, \ 2N-i-j+2\le t-s\le \min(i+j, 2(N+1)-i-j)$, which implies
$i+j=t-s=N+1$. It is a contradiction.

\medskip
\noi
In case (i), we set $\xi_k=t+k-j$. Then we have $m_k=k-1$ by Example~\ref{ex: quiver Q A} (b), and
$t-i-j+2=\xi_i-2m_i\le s\le \xi_i=t+i-j$.

\smallskip
\noi
In case (ii), we set $\xi_k=t-k+j$. Then we have $m_k=N-k$ by Example~\ref{ex: quiver Q A} (a), and
$t-2N+i+j=\xi_i-2m_i\le s\le \xi_i=t-i+j$.

\bigskip
\noindent
Now let us consider the case $\g=D^{(1)}_{N}$.

\smallskip
\noi
(Case $\g=D^{(1)}_{N}$, $1\le j\le N-2$)\quad
In this case we may assume that
$i\le N-1$. Choosing $\xi_{N-1}=\xi_N$, we may assume $m_k=N-2$.
By \eqref{eq: denominator D1}, we have
$i-j+2\le t-s\le 2N-2+j-i$ and $t-s\equiv i-j\modt$.
Hence, by taking $\xi_j=t$, and $\xi_i=t-i+j$,
we have $\xi_i-2m_i=t-i+j-2N+4\le s\le \xi_i$ except the case
$i=j$ and $t-s=2N-2$. The last case is excluded by the assumption that
$V(\vpi_i)_x$ and $V(\vpi_j)_y$ are not dual to each other.

\medskip
\noi
(Case $\g=D^{(1)}_{N}$, $i,j\in\{N-1,N\}$)\quad
By \eqref{eq: denominator D1}, we have
$2\le t-s\le 2N-2$ and $t\equiv s\modt$.

Assume first that $N$ is even. Then the case $i-j=t-s-2(N-1)=0$ is excluded by the assumption.
By \eqref{eq: D m_i}, we have $m_k=N-2$.
Hence it is enough to take $\xi_j=t$ and
$\xi_i=t-2\delta(i\not=j)$. Then $\xi_i-2m_i\le s\le \xi_i$.

Assume next that $N$ is odd.
Then the case $i\not=j$, $t-s=2(N-1)$ is excluded.
If $i=j$ then we take $m_i=N-1$, $\xi_i=t$.
If $i\not=j$, then $0\le t-s\le 2(N-2)$ and we take
$\xi_i=\xi_j=t$, $m_i=N-2$. Then $\xi_i-2m_i\le s\le \xi_i$.
\end{proof}


\bibliographystyle{amsplain}


\end{document}